\DeclarePairedDelimiter{\floor}{\lfloor}{\rfloor}
\numberwithin{equation}{section}
\theoremstyle{plain}
\newtheorem{theorem}{Theorem}[section]
\newtheorem{proposition}[theorem]{Proposition}
\newtheorem{lemma}[theorem]{Lemma}
\newtheorem{example}[theorem]{Example}
\newtheorem{corollary}[theorem]{Corollary}
\theoremstyle{definition}
\newtheorem{definition}[theorem]{Definition}
\newcommand\R{\mathbb R}
\newcommand\M{\mathbb M}
\newcommand\N{\mathbb N}
\newcommand\Oo{\mathcal{O}}
\newcommand\Z{\mathbb{Z}}
\newcommand\iO{\int_\Omega}
\newcommand{\iOQ}{\int_{\Omega}\int_Q}
\newcommand\iQ{\int_Q}
\newcommand\e{\varepsilon}
\newcommand\PP{\mathbb{P}}
\newcommand\QQ{\mathbb{Q}}
\newcommand\rd{\R^d}
\newcommand\rl{\R^l}
\newcommand\en{\e_n}
\newcommand\liminfn{\liminf_{n\to +\infty}}
\newcommand\ep{\varepsilon}
\newcommand{\scal}[2]{\langle #1,\,#2 \rangle}
\newcommand\wk{\rightharpoonup}
\newcommand\wkts{\overset{2-s}{\rightharpoonup}}
\newcommand\sts{\overset{2-s}{\to}}
\newcommand{\C}{\mathcal{C}}
\newcommand\Rn{\mathbb{R}^N}
\newcommand\pdeor{\mathscr{A}}
\newcommand{\A}{\mathbb{A}}
\newcommand{\cx}[0]{\mathcal{C}_{x}}
\newcommand{\qa}[0]{Q_{\pdeor}}
\newcommand{\iq}[0]{\int_Q}
\newcommand\be[1]{\begin{equation}\label{#1}}
\newcommand\ee{\end{equation}}
\newcommand\ba[1]{\begin{align}\label{#1}}
\newcommand\ea{\end{align}}
\newcommand\bas{\begin{align*}}
\newcommand\eas{\end{align*}}
\newcommand\nn{\nonumber}
\title [Homogenization for $\pdeor(x)-$quasiconvexity]{Homogenization for $\pdeor$-quasiconvexity with variable coefficients} 
\author[E. Davoli] {Elisa Davoli} 
\address[Elisa Davoli]{Department of Mathematics\\ University of Vienna\\Oskar-Morgenstern-Platz 1\\1090 Vienna (Austria)}
\email[E. Davoli]{elisa.davoli@univie.ac.at}
\author[I. Fonseca] {Irene Fonseca} 
\address[Irene Fonseca]{Department of Mathematics\\ Carnegie Mellon University\\Forbes Avenue\\Pittsburgh PA 15213}
\email[I. Fonseca]{fonseca@andrew.cmu.edu}
\subjclass[2010]{49J45; 35D99; 49K20}
\keywords{Homogenization, two-scale convergence, $\pdeor-$quasiconvexity.}
\begin{document} 
\vskip .2truecm
\begin{abstract}
\small{A homogenization result for a family of oscillating integral energies
$$u_{\ep}\mapsto\iO f(x,\tfrac{x}{\ep},u_{\ep}(x))\,dx,\quad \ep\to 0^+$$
is presented, where the fields $u_{\ep}$ are subjected to first order linear differential constraints depending on the space variable $x$. 
The work is based on the theory of $\pdeor$-quasiconvexity with variable coefficients and on two-scale convergence techniques, and generalizes the previously obtained results in the case in which the differential constraints are imposed by means of a linear first order differential operator with constant coefficients. The identification of the relaxed energy in the framework of $\pdeor$-quasiconvexity with variable coefficients is also recovered as a corollary of the homogenization result.
}
\end{abstract}
\maketitle
\section{Introduction}
In this paper we continue the study of the problem of finding an integral representation for limits of oscillating integral energies
$$u_{\ep}\mapsto \iO f\Big(x,\frac{x}{\ep^{\alpha}},u_{\ep}(x)\Big)\,dx,$$
where $\Omega\subset \R^N$ is a bounded open set, $\ep\to 0$, and the fields $u_{\ep}$ are subjected to $x-$dependent differential constraints of the type
\be{eq:constrain-no-divergence}
\sum_{i=1}^N  A^i\Big(\frac{x}{\ep^{\beta}}\Big)\frac{\partial u_{\ep}(x)}{\partial {x_i}}\to 0\quad\text{strongly in }W^{-1,p}(\Omega;\R^l),\,1<p<+\infty,
\ee
or in divergence form
\be{eq:constrain-divergence}
\sum_{i=1}^N \frac{\partial}{\partial {x_i}} \Big(A^i\Big(\frac{x}{\ep^{\beta}}\Big)u_{\ep}(x)\Big)\to 0\quad\text{strongly in }W^{-1,p}(\Omega;\R^l),\,1<p<+\infty,
\ee
with $A^i(x)\in Lin(\R^d;\R^l)$ for every $x\in\R^N$, $i=1,\cdots,N$, $d,l\geq 1$, and where $\alpha,\beta$ are two nonnegative parameters. Different regimes are expected to arise, depending on the relation between $\alpha$ and $\beta$. 

We recently analyzed in \cite{davoli.fonseca} the limit case in which $\alpha=0,\, \beta>0$, the energy density is independent of the first two variables, and the fields $\{u_{\ep}\}$ are subjected to \eqref{eq:constrain-divergence}. We will consider here the case in which $\alpha>0, \beta=0$ and \eqref{eq:constrain-no-divergence}, i.e., the energy density is oscillating but the differential constraint is fixed and in ``nondivergence" form. 
The situation in which there is an interplay between $\alpha$ and $\beta$ will be the subject of a forthcoming paper.\\

The key tool for our study is the notion of $\pdeor-$quasiconvexity with variable coefficients, characterized in \cite{santos}. $\pdeor-$quasiconvexity was first investigated by Dacorogna in \cite{dacorogna} and then studied by Fonseca and M\"uller in \cite{fonseca.muller} in the case of constant coefficients (see also \cite{fonseca.dacorogna}).  More recently, in \cite{santos}  Santos extended the analysis of \cite{fonseca.muller} to the case in which the coefficients of the differential operator $\pdeor$ depend on the space variable. 

In order to illustrate the main ideas of $\pdeor$-quasiconvexity, we need to introduce some notation.
For $i=1\cdots,N$, consider matrix-valued maps $A^i\in C^{\infty}(\R^N;\M^{l\times d})$, where for $l,d\in\N$, $\M^{l\times d}$ stands for the linear space of matrices with $l$ rows and $d$ columns, and for every $x\in \R^N$ define $\pdeor$ as the differential operator such that
\be{eq:intro-def-op}\pdeor u:=\sum_{i=1}^N A^i(x)\frac{\partial u(x)}{\partial {x_i}},\,x\in\Omega\ee
for $u\in L^1_{\rm loc}(\Omega; \R^d)$, where $\frac{\partial u}{\partial {x_i}}$ is to be interpreted in the sense of distributions.
We require that the operator $\pdeor$ satisfies a uniform constant-rank assumption (see \cite{murat}), i.e., there exists $r\in \N$ such that 
 \begin{equation}
 \label{cr}
 \text{rank }\sum_{i=1}^N A^i(x)w_i=r\quad\text{for every }w\in\mathbb{S}^{n-1},
 \end{equation} uniformly with respect to $x$, where $\mathbb{S}^{N-1}$ is the unit sphere in $\mathbb{R}^N$.

The definitions of $\pdeor$-quasiconvex function and $\pdeor$-quasiconvex envelope in the case of variable coefficients read as follows:
\begin{definition}
Let $f:\Omega\times\R^d \to \R$ be a  Carath\'eodory function, let $Q$ be the unit cube in $\R^N$  centered at the origin, 
$$Q=\Big(-\frac{1}{2},\frac{1}{2}\Big)^N,$$
and denote by  $C^{\infty}_{\rm per}(\R^N;\R^d)$ the set of smooth maps which are $Q$-periodic in $\R^N$. Consider the set
$$\cx:=\Big\{w\in C^{\infty}_{\rm per}(\R^N;\R^d):\,\int_Q{w(y)\,dy}=0,\quad \sum_{i=1}^N A^i(x)\frac{\partial w(y)}{\partial {y_i}}=0 \Big\}.$$
For a.e. $x\in\Omega$ and $\xi\in\R^d$, the \emph{$\pdeor-$quasiconvex envelope} of $f$ in $x\in\Omega$ is defined as
$$\qa f(x,\xi):=\inf\Big\{\iq f(x,\xi+w(y))\,dy:\,w\in\cx\Big\}.$$
$f$ is said to be \emph{$\pdeor$-quasiconvex} if $f(x,\xi)=\qa f(x,\xi)$ for a.e. $x\in\Omega$ and $\xi\in\R^d$.
\end{definition}
Denote by $\pdeor^c$ a generic differential operator, defined as in \eqref{eq:intro-def-op} and with constant coefficients, i.e. such that 
$$A^i(x)\equiv A^i_c\quad\text{for every }x\in\R^N,$$
with $A^i_c\in\M^{l\times d}$, $i=1,\cdots,N$.
We remark that when $\pdeor = \pdeor^c= \rm curl$, i.e., when $v=\nabla\phi$ for some $\phi\in	W^{ 1,1}_{\rm loc} (\Omega; \R^m )$, then $d=m\times N$, and $\pdeor$-quasiconvexity reduces to Morrey's notion of quasiconvexity (see \cite{acerbi.fusco, ball, marcellini, morrey}).

The first identification of the effective energy associated to periodic integrands evaluated along $\pdeor^c$-free fields was provided in \cite{braides.fonseca.leoni}, by Braides, Fonseca and Leoni. Their homogenization results were later generalized in \cite{fonseca.kromer}, where Fonseca and Kr\"omer worked under weaker assumptions on the energy density $f$.\\

This paper is devoted to extending the results in \cite{fonseca.kromer} to the framework of $\pdeor-$quasiconvexity with variable coefficients. To be precise, in \cite{fonseca.kromer} the authors studied the homogenized energy associated to a family of functionals of the type
$$F_{\ep}(u_{\ep}):=\int_{\Omega}f\big(x,\tfrac{x}{\ep},u_{\ep}(x)\big)\,dx,$$
where $\Omega$ is a bounded, open subset of $\R^N$, $u_{\ep}\wk u$ weakly in $L^p(\Omega;\R^d)$  and the sequence $\{u_{\ep}\}$ satisfies a differential constraint of the form $\pdeor^c u_{\ep}=0$ for every $\ep$. 

We analyze the analogous problem in the case in which $\pdeor$ depends on the space variable and the differential constraint is replaced by the condition
$$\pdeor u_{\ep}\to 0\quad\text{strongly in }W^{-1,p}(\Omega;\R^l).$$ 
Our analysis leads to a limit homogenized energy of the form:
$$\mathscr{E}_{\rm hom}(u):=\begin{cases}\int_{\Omega} f_{\rm hom}(x,u(x))\,dx&\text{if }\pdeor u=0,\\
+\infty&\text{otherwise in }L^p(\Omega;\R^d),\end{cases}$$
where $\mathscr{W}$ is the class of maps $w\in L^p(\Omega;L^p_{\rm per}(\R^N;\R^d))$ such that 
$$\int_Q w(x,y)\,dy=0\quad\text{for a.e. }x\in\Omega,$$ and
\be{eq:condition-w}
\sum_{i=1}^N A^i(x)\frac{\partial w(x,y)}{\partial {y_i}}=0\quad\text{in }W^{-1,p}(Q;\R^l)\,\text{ for a.e. }x\in\Omega,\ee
and $f_{\rm hom}:\Omega\times \R^d\to [0,+\infty)$ is defined as
$$f_{\rm hom}(x,\xi):=\liminfn\inf_{v\in\C_x}\int_Q f(x,ny,\xi+v(y))\,dy.$$
Our main result is the following.
\begin{theorem}
\label{thm:main}
Let $1<p<+\infty$. Let $A^i\in C^{\infty}_{\rm per}(\R^N;\M^{l\times d})$, $i=1,\cdots, N$, and assume that $\pdeor$ satisfies the constant rank condition \eqref{cr}. Let $f:\Omega\times \Rn\times \rd$ be a function satisfying 
\ba{eq:hp-f-1}
 & f(x,\cdot,\xi)\quad\text{ is measurable},\\
 &\label{eq:hp-f-2} f(\cdot,y,\cdot)\quad\text{ is continuous},\\
 &\label{eq:hp-f-3} f(x,\cdot,\xi)\quad\text{ is }Q-\text{periodic},\\
 &\label{eq:growth-p-f-3}0\leq f(x,y,\xi)\leq C(1+|\xi|^p)\quad\text{for all }(x,\xi)\in\Omega\times\rd,\text{ and for a.e. }y\in\Rn.
 \end{align}
Then for every $u\in L^p(\Omega;\rd)$ there holds
\begin{multline*}
\inf\Big\{\liminf_{\ep\to 0}\iO f\Big(x,\frac{x}{\ep},u_{\ep}(x)\Big)\,dx:u_{\ep}\wk u\quad\text{weakly in }L^p(\Omega;\rd)\\
\text{and }\pdeor u_{\ep}\to 0\quad\text{strongly in }W^{-1,p}(\Omega;\rl)\Big\}\\
=\inf\Big\{\limsup_{\ep\to 0}\iO f\Big(x,\frac{x}{\ep},u_{\ep}(x)\Big)\,dx:u_{\ep}\wk u\quad\text{weakly in }L^p(\Omega;\rd)\\
\text{and }\pdeor u_{\ep}\to 0\quad\text{strongly in }W^{-1,p}(\Omega;\rl)\Big\}=\mathscr{E}_{\rm hom}(u).
\end{multline*}
\end{theorem}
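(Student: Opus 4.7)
The strategy is to prove the $\Gamma$-liminf and $\Gamma$-limsup inequalities separately via two-scale convergence, combining ideas from \cite{fonseca.kromer} with the variable-coefficient $\pdeor$-quasiconvexity calculus of \cite{santos}.

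For the \emph{lower bound} $\Gamma\text{-}\liminf \geq \mathscr{E}_{\mathrm{hom}}$, I would take an admissible sequence $u_\varepsilon\wk u$ with $\pdeor u_\varepsilon\to 0$ in $W^{-1,p}$, extract a two-scale limit $u_0\in L^p(\Omega\times Q;\rd)$ satisfying $\iQ u_0(x,y)\,dy = u(x)$, and first argue that the two-scale limit inherits the differential constraint
\[
\sum_{i=1}^N A^i(x)\frac{\partial u_0(x,y)}{\partial y_i} = 0\quad\text{in }W^{-1,p}(Q;\rl)\text{ for a.e. }x\in\Omega,
\]
so that $u_0(x,\cdot)-u(x)$ lies in $\mathscr{W}$. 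This is the step where the variable coefficients genuinely enter: testing $\pdeor u_\varepsilon$ against $\varepsilon^{-1}$-oscillating smooth test functions produces a commutator between $A^i(x)$ and $\partial_{x_i}$ which must be shown to vanish in the two-scale limit, because the $A^i$ vary on scale $1$ while $u_\varepsilon$ oscillates on scale $\varepsilon$. A standard two-scale lower semicontinuity then yields
\[
\liminf_{\varepsilon \to 0}\iO f\bigl(x,\tfrac{x}{\varepsilon},u_\varepsilon(x)\bigr)\,dx \;\geq\; \iOQ f(x,y,u_0(x,y))\,dy\,dx,
\]
and a pointwise-in-$x$ Jensen-type argument together with the $ny$-rescaling in the definition of $f_{\mathrm{hom}}$ reduces the right-hand side to $\iO f_{\mathrm{hom}}(x,u(x))\,dx$; the required density of $\mathcal{C}_{x_0}$ in the $L^p$-closure of frozen-$\pdeor(x_0)$-free periodic fields rests on the constant-rank assumption \eqref{cr}.

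For the \emph{upper bound}, I would build a recovery sequence by localizing: at each $x_0\in\Omega$ and each scale $n$, select near-minimizers $v^{x_0}_{n,\delta}\in \mathcal{C}_{x_0}$ of the infimum defining $f_{\mathrm{hom}}(x_0,u(x_0))$, and, after a measurable selection in $x_0$ together with a diagonalization in $n,\delta$ and $\varepsilon$, define an ansatz of the form $u_\varepsilon(x)=u(x)+v^x_{n(\varepsilon)}(x/\varepsilon)$. This delivers the correct energy up to vanishing errors, but $\pdeor u_\varepsilon$ fails to go to zero in $W^{-1,p}$ because the correctors are frozen-$\pdeor(x_0)$-free, not $\pdeor(x)$-free. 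Applying the projection onto $\pdeor$-free fields built under \eqref{cr} in \cite{santos} restores the constraint without inflating the energy in the limit, thanks to the $p$-growth \eqref{eq:growth-p-f-3} and the $L^p$-continuity of the projection; a preliminary density reduction to smooth $\pdeor$-free $u$ allows the measurable dependence on $x_0$ to be handled.

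The main obstacle in both halves is reconciling the local freezing of the operator in the definitions of $\mathcal{C}_x$ and $f_{\mathrm{hom}}$ with the genuine $x$-dependence of the global constraint; technically this amounts to controlling the commutator $[A^i(x),\partial_{x_i}]$ acting on sequences oscillating at scale $\varepsilon$ (in the lower bound) and the error between $\pdeor u_\varepsilon$ and the frozen operator $\pdeor(x_0)u_\varepsilon$ on small neighborhoods of $x_0$ (in the upper bound). Both are handled via local Taylor expansion of the smooth coefficients $A^i$ together with \eqref{cr}.
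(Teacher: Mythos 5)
Your overall architecture (two-scale compactness, identifying the constraint satisfied by the two-scale limit, projection onto the frozen-$x$ kernel, recovery sequences via an oscillating corrector) matches the paper's, and your Lemma-4.2-style step showing the two-scale limit inherits $\pdeor_y u_0(x,\cdot)=0$ is essentially what the paper does. Your upper bound is also close in spirit to Proposition~4.3, although the paper sidesteps the measurable selection of near-minimizers in $\C_{x_0}$ by first working with the \emph{global} class $\mathscr{W}$ in \eqref{eq:def-w-f} and only afterwards (Corollary~\ref{cor:local}, via Lemma~\ref{lemma:measurability}) showing $\mathscr{E}_{\mathrm{hom}}$ reduces to the pointwise $\C_x$-infimum; this decoupling is cleaner than building the recovery sequence directly from the pointwise $f_{\mathrm{hom}}$.

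The lower bound, however, contains a genuine gap. You assert ``a standard two-scale lower semicontinuity'' giving
\[
\liminf_{\varepsilon\to 0}\iO f\bigl(x,\tfrac{x}{\varepsilon},u_\varepsilon(x)\bigr)\,dx \;\geq\; \iOQ f(x,y,u_0(x,y))\,dy\,dx,
\]
but this inequality is \emph{false} without convexity of $f$ in the third variable — and no convexity is assumed in \eqref{eq:hp-f-1}--\eqref{eq:growth-p-f-3}; handling that nonconvexity is precisely the point of the $\pdeor$-quasiconvexity framework. Moreover, the right-hand side has $f(x,y,\cdot)$ while $f_{\mathrm{hom}}$ is defined through $\liminf_n\inf_{v\in\C_x}\iq f(x,ny,\cdot)\,dy$, so the scales do not match up and a ``Jensen-type'' argument cannot bridge them. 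What the paper actually does is substantially more delicate: Proposition~\ref{thm:liminf-A-free-1} first truncates to a $p$-equiintegrable sequence and rescales via $\theta_{\nu,n}$ to replace $x/\varepsilon_n$ by $\nu n x$ with $n,\nu\in\N$; Proposition~\ref{thm:liminf-A-free-2} then applies the unfolding operator $T_{1/\nu}$, establishes the quantitative estimate \eqref{eq:most-difficult-estimate} on $\pdeor_y(x)\hat v_{\nu,z,n}$, and — crucially — uses the pointwise projection $\Pi(x)$ from Lemma~\ref{lemma:proj-operator} to replace $\hat v_{\nu,z,n}$ by a field $w_{\nu,n}\in\mathscr{W}$ while preserving the energy up to errors controlled by $p$-equiintegrability and \eqref{eq:growth-p-f-3}, after which the infimum defining $\mathscr{E}_{\mathrm{hom}}$ is invoked directly. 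So the projection operator is indispensable in the liminf inequality, not only in the recovery-sequence construction as your proposal suggests, and the passage from the two-scale limit to $f_{\mathrm{hom}}$ cannot be achieved by a lower semicontinuity statement alone.
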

As in \cite{davoli.fonseca} and \cite{fonseca.kromer}, the proof of this result is based on the \emph{unfolding operator}, introduced in \cite{cioranescu.damlamian.griso08,cioranescu.damlamian.griso} (see also \cite{visintin, visintin1}). In contrast with \cite[Theorem 1.1]{fonseca.kromer} (i.e. the case in which $\pdeor=\pdeor^c$), here we are unable to work with exact solutions of the system $\pdeor u_{\ep}=0$, but instead we consider sequences of asymptotically $\pdeor-$vanishing fields. This is due to the fact that for $\pdeor$-quasiconvexity with variable coefficients we do not project directly on the kernel of the differential constraint, but construct an ``approximate" projection operator $P$ such that for every field $v\in L^p$, the $W^{-1,p}$ norm of $\pdeor Pv$ is controlled by the $W^{-1,p}$ norm of $v$ itself (for a detailed explanation we refer to \cite[Subsection 2.1]{santos}). 

In \cite{davoli.fonseca} the issue of defining a projection operator was tackled by imposing an additional invertibility assumption on $\pdeor$ and by exploiting the divergence form of the differential constraint. We do not add this invertibility requirement here, instead we use the fact that
 in our framework the differential operator depends on the ``macro" variable $x$ but acts on the ``micro" variable $y$ (see \eqref{eq:condition-w}). Hence it is possible to define a pointwise projection operator $\Pi(x)$ along the argument of \cite[Lemma 2.14]{fonseca.kromer} (see Lemma \ref{lemma:proj-operator}). 

As a corollary of our main result we recover an alternative proof of the relaxation theorem \cite[Theorem 1.1]{braides.fonseca.leoni} in the framework of $\pdeor-$quasiconvexity with variable coefficients, 
that is we obtain the identification (see Corollary \ref{thm:relax})
$$\int_D \qa f(x,u(x))\,dx=\mathcal{I}(u,D)$$
for every open subset $D$ of $\Omega$, and for every $u\in L^p(\Omega;\rd)$ satisfying $\pdeor u=0$, where the functional $\mathcal{I}$ is defined as
\ba{eq:def-i-intro}
&\mathcal{I}(u,D):=\inf\Big\{\liminf_{\ep\to 0}\int_{D}f(x,u_{\ep}(x)):\, \\
\nn&\quad u_{\ep}\wk u\quad\text{weakly in }L^p(\Omega;\R^m)\,\text{and }\pdeor u_{\ep}\to 0\quad\text{strongly in }W^{-1,p}(\Omega;\R^l)\Big\}.
\end{align}
We point out here that a proof of this relaxation theorem follows directly combining \cite[Proof of Theorem 1.1]{braides.fonseca.leoni} with the arguments in \cite{santos}.
The interest in Corollary \ref{thm:relax} lies in the fact that it is obtained as a by-product of our homogenization result, and thus by adopting a completely different proof strategy.\\

In analogy to \cite{davoli.fonseca} one might expect to be able to apply an approximation argument and extend the results in Theorem \ref{thm:main-result-A-free} to the situation in which $A^i\in W^{1,\infty}(\R^N;\M^{l\times d})$, $i=1\cdots,N$, which is the least regularity assumption in order for $\pdeor$ to be well defined as a differential operator from $L^p$ to $W^{-1,p}$. We were unable to achieve this generalization, mainly because the projection operator here plays a key role in the proof of both the liminf and the limsup inequalities. In order to work with approximant operators $\pdeor^k$ having smooth coefficients, we would need to finally project on the kernel of $\pdeor$, whereas the projection argument provided in \cite{santos} applies only to the case of smooth differential constraints.\\

The article is organized as follows. In Section \ref{section:prel} we establish the main assumptions on the differential operator $\pdeor$ and we recall some preliminary results on two-scale convergence. In Section \ref{section:pt-quas} we recall the definition of $\pdeor$-quasiconvex envelope and we construct some examples of $\pdeor-$quasiconvex functions. Section \ref{section:A-free} is devoted to the proof of our main result.\\

\noindent\textbf{Notation}\\
Throughout this paper, $\Omega\subset \R^N$ is a bounded open set, $\mathcal{O}(\Omega)$ is the set of open subsets of $\Omega$,  $Q$ denotes the unit cube in $\R^N$ centered at the origin and with normals to its faces parallel to the vectors in the standard orthonormal basis of $\R^N$, 
$\{e_1,\cdots,e_N\}$, i.e.,
$$Q=\Big(-\frac{1}{2},\frac{1}{2}\Big)^N.$$
Given $1<p<+\infty$, we denote by $p'$ its conjugate exponent, that is 
$$\frac{1}{p}+\frac{1}{p'}=1.$$
Whenever a map $u\in L^p, C^{\infty},\cdots$, is $Q-$periodic, that is
$$u(x+e_i)=u(x)\quad i=1,\cdots, N$$
for a.e. $x\in \R^N$, we write $u\in L^p_{\rm per}, C^{\infty}_{\rm per},\cdots$, respectively. We will implicitly identify the spaces $L^p(Q)$ and $L^p_{\rm per}(\R^N)$. We will designate by $\scal{\cdot}{\cdot}$ the duality product between $W^{-1,p}$ and $W^{1,p'}_0$.

We adopt the convention that $C$ will denote a generic constant, whose value may change from expression to expression in the same formula.

\section{Preliminary results}
\label{section:prel}
In this section we introduce the main assumptions on the differential operator $\pdeor$ and we recall some preliminary results about $\pdeor-$quasiconvexity and two-scale convergence.
\subsection{Preliminaries}
\label{subsection:prel}
For $i=1,\cdots, N$, consider the matrix-valued functions $A^i\in C^{\infty}(\R^N;\M^{l\times d})$. For $1<p<+\infty$ and $u\in L^p(\Omega;\R^d)$, we set
$$\pdeor u:=\sum_{i=1}^N A^i(x)\frac{\partial u(x)}{\partial x_i} \in W^{-1,p}(\Omega;\R^l).$$
For every $x_0\in\Omega$ and $u\in L^p(\Omega;\rd)$ we define
$$\pdeor(x_0) u:=\sum_{i=1}^N A^i(x_0)\frac{\partial u(x)}{\partial x_i} \in W^{-1,p}(\Omega;\R^l).$$
We will also consider the operators
$$\pdeor_x w:=\sum_{i=1}^N A^i(x)\frac{\partial w(x,y)}{\partial {x_i}}$$
and
$$\pdeor_y w:=\sum_{i=1}^N A^i(x)\frac{\partial w(x,y)}{\partial {y_i}}$$
for every $w\in L^p(\Omega\times Q;\R^d)$.
Finally, for every $x_0\in\Omega$ and for $w\in L^p(\Omega\times Q;\R^d)$, we set
$$\pdeor_x (x_0) w:=\sum_{i=1}^N A^i(x_0)\frac{\partial w(x,y)}{\partial {x_i}}$$
and
$$\pdeor_y (x_0)w:=\sum_{i=1}^N A^i(x_0)\frac{\partial w(x,y)}{\partial {y_i}}.$$

For every $x\in \R^N$, $\lambda\in \R^N\setminus \{0\}$, let $\A(x,\lambda)$ be the linear operator
$$\A(x,\lambda):=\sum_{i=1}^N A^i(x)\lambda_i\in \M^{l\times d}.$$
We assume that $\pdeor$ satisfies the following \emph{constant rank condition}:
\be{eq:constant-rank-condition}
\text{rank }\Big(\sum_{i=1}^N A^i(x)\lambda_i\Big)=r\quad\text{for some }r\in N\text{ and for all }x\in\R^N,\lambda\in \Rn \setminus \{0\}.
\ee
For every $x\in\Rn$, $\lambda\in \Rn\setminus\{0\}$, let $\PP(x,\lambda):\rd\to\rd$ be the linear projection on Ker $\A(x,\lambda)$, and let $\QQ(x,\lambda):\rl\to\rd$ be the linear operator given by
\begin{eqnarray*}
&&\QQ(x,\lambda)\A(x,\lambda)\xi:=\xi-\PP(x,\lambda)\xi\quad\text{for all }\xi\in\rd,\\
&&\QQ(x,\lambda)\xi=0\quad\text{if }\xi\notin \text{Range }\A(x,\lambda).
\end{eqnarray*}
The main properties of $\PP(\cdot,\cdot)$ and $\QQ(\cdot,\cdot)$ are stated in the following proposition (see \cite[Subsection 2.1]{santos}).
\begin{proposition}
\label{prop:properties-P-Q}
Under the constant rank condition \eqref{eq:constant-rank-condition}, for every $x\in\Rn$ the operators $\PP(x,\cdot)$ and $\QQ(x,\cdot)$ are, respectively, $0-$homogeneous and $(-1)-$homogeneous. Moreover,
$\PP\in C^{\infty}(\Rn\times \Rn\setminus\{0\};\M^{d\times d})$ and $\QQ\in C^{\infty}(\Rn\times\Rn\setminus\{0\};\M^{d\times l})$.
\end{proposition}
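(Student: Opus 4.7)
The plan is to dispatch homogeneity by direct inspection of the definitions and to obtain smoothness via a spectral (Dunford--Riesz) projector formula enabled by the constant rank hypothesis.

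For homogeneity, since $\A(x, t\lambda) = t\A(x,\lambda)$ for every $t>0$, the subspaces $\ker \A(x,\lambda)$ and $\mathrm{Range}\,\A(x,\lambda)$ are invariant under $\lambda \mapsto t\lambda$. Thus $\PP(x, t\lambda) = \PP(x,\lambda)$, giving $0$-homogeneity. For $\QQ$, substituting $t\lambda$ into the defining identity and using $\PP(x,t\lambda) = \PP(x,\lambda)$ yields $\QQ(x,t\lambda)\,t\A(x,\lambda)\xi = \xi - \PP(x,\lambda)\xi = \QQ(x,\lambda)\A(x,\lambda)\xi$ on $\rd$, so $\QQ(x,t\lambda) = t^{-1}\QQ(x,\lambda)$ on $\mathrm{Range}\,\A(x,\lambda)$; on the complement both sides vanish, and we conclude $(-1)$-homogeneity.

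For smoothness of $\PP$, consider the auxiliary map $B(x,\lambda) := \A(x,\lambda)^T \A(x,\lambda) \in \M^{d\times d}$, which is smooth on $\Rn \times (\Rn\setminus\{0\})$, symmetric, positive semidefinite, with $\ker B(x,\lambda) = \ker \A(x,\lambda)$ and constant rank $r$ by hypothesis. Consequently the spectrum of $B(x_0,\lambda_0)$ consists of the eigenvalue $0$ (with multiplicity $d-r$) and $r$ strictly positive eigenvalues. Continuity of the spectrum allows us to choose, on a neighborhood of any fixed $(x_0,\lambda_0)$, a small circle $\gamma\subset \mathbb{C}$ enclosing only $0$ and bounded away from the positive eigenvalues. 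The Riesz projector
\[
\PP(x,\lambda) \;=\; \frac{1}{2\pi i}\oint_\gamma (zI - B(x,\lambda))^{-1}\,dz
\]
is then smooth in $(x,\lambda)$ on that neighborhood and equals the orthogonal projection onto $\ker B(x,\lambda) = \ker \A(x,\lambda)$; a covering argument extends smoothness to all of $\Rn\times(\Rn\setminus\{0\})$.

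For smoothness of $\QQ$, one recognizes it as the Moore--Penrose pseudoinverse of $\A(x,\lambda)$. Applying the same Riesz construction to $\A(x,\lambda)\A(x,\lambda)^T \in \M^{l\times l}$, whose kernel is $(\mathrm{Range}\,\A(x,\lambda))^\perp$, produces a smooth projector $\PP_R(x,\lambda)$ onto that orthogonal complement. Then
\[
\QQ(x,\lambda) \;=\; (I-\PP(x,\lambda))\,\A(x,\lambda)^T \bigl(\A(x,\lambda)\A(x,\lambda)^T + \PP_R(x,\lambda)\bigr)^{-1},
\]
where the matrix in parentheses is genuinely invertible thanks to the direct sum decomposition $\rl = \mathrm{Range}\,\A(x,\lambda) \oplus (\mathrm{Range}\,\A(x,\lambda))^\perp$. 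All ingredients are smooth, so $\QQ \in C^\infty$. The main obstacle is the smoothness assertion: without the constant rank condition the $0$-eigenspace of $B$ could collapse or jump, making the separating contour $\gamma$ unavailable; indeed, even continuity of $\PP$ typically fails at rank-drop points. Thus \eqref{eq:constant-rank-condition} is used in an essential way at exactly this step.
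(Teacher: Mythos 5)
Your argument is correct, and it matches the standard approach: the paper itself does not prove this proposition but cites \cite[Subsection 2.1]{santos}, which in turn adapts the classical treatment in Fonseca--M\"uller based on exactly the Dunford--Riesz holomorphic functional calculus you use, applied to $\A^T\A$ (and $\A\A^T$ for the $\QQ$ side), with the constant-rank hypothesis entering to keep the zero eigenvalue isolated from the positive spectrum. Your homogeneity computation, the contour-integral projector formula, and the expression $\QQ=(I-\PP)\A^T(\A\A^T+\PP_R)^{-1}$ (which indeed reproduces the defining relations, since $\A^T(\A\A^T+\PP_R)^{-1}$ is the Moore--Penrose pseudoinverse and annihilates $(\mathrm{Range}\,\A)^\perp$) are all sound; no gap.
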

\subsection{Two-scale convergence}
We recall here the definition  and some properties of two-scale convergence. For a detailed treatment of the topic we refer to, e.g., \cite{allaire, lukkassen.nguetseng.wall, nguetseng}. Throughout this subsection $1<p<+\infty$.
\begin{definition}
If $v\in L^p(\Omega\times Q;\rd)$ and $\{u_{\ep}\}\in L^p(\Omega;\rd)$, we say that $\{u_{\ep}\}$ \emph{weakly two-scale converge to} $v$ in $L^p(\Omega\times Q;\rd)$, $u_{\ep}\wkts v$, if
$$\iO u_{\ep}(x)\cdot \varphi\Big(x,\frac{x}{\ep}\Big)\,dx\to \iOQ v(x,y)\cdot \varphi(x,y)\,dy\,dx$$
for every $\varphi\in L^{p'}(\Omega;C^{\infty}_{\rm per}(\Rn;\rd))$.\\

We say that $\{u_{\ep}\}$ \emph{strongly two-scale converge to $v$} in $L^p(\Omega\times Q;\rd)$, $u_{\ep}\sts v$, if $u_{\ep}\wkts v$ and $$\lim_{\ep\to 0}\|u_{\ep}\|_{L^p(\Omega;\rd)}=\|v\|_{L^p(\Omega\times Q;\rd)}.$$
\end{definition}
Bounded sequences in $L^p(\Omega;\rd)$ are pre-compact with respect to weak two-scale convergence. To be precise (see \cite[Theorem 1.2]{allaire}), 
\begin{proposition}
\label{prop:2-scale-compactness}
Let $\{u_{\ep}\}\subset L^p(\Omega;\rd)$ be bounded. Then, there exists $v\in L^p(\Omega\times Q;\rd)$ such that, up to the extraction of a (non relabeled) subsequence, $u_{\ep}\wkts v$ weakly two-scale in $L^p(\Omega\times Q;\R^d)$, and, in particular
$$u_{\ep}\wk \iQ v(x,y)\,dy\quad\text{weakly in }L^p(\Omega;\rd).$$
\end{proposition}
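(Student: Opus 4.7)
The plan is to follow Nguetseng's classical argument: realize the two-scale test integrals as a family of linear functionals on a separable test space, extract a weak-$\ast$ limit, and identify it with an element of $L^p(\Omega\times Q;\rd)$ via Riesz representation.

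For each $\ep>0$ set
$$T_\ep(\varphi):=\iO u_\ep(x)\cdot\varphi\!\left(x,\tfrac{x}{\ep}\right)dx,\qquad \varphi\in L^{p'}(\Omega;C^{\infty}_{\rm per}(\Rn;\rd)).$$
The test space is separable and continuously embedded in $L^{p'}(\Omega\times Q;\rd)$; by H\"older's inequality one has $|T_\ep(\varphi)|\leq \|u_\ep\|_{L^p(\Omega;\rd)}\,\|\varphi\|_{L^{p'}(\Omega;C^{\infty}_{\rm per})}$, so by the uniform $L^p$ bound the family $\{T_\ep\}$ is equibounded. A diagonal extraction along a countable dense subset then yields a (non-relabeled) subsequence with $T_\ep(\varphi)\to L(\varphi)$ for every admissible $\varphi$, where $L$ is a bounded linear functional on the test space.

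The crucial step is to promote $L$ to a bounded linear functional on the larger space $L^{p'}(\Omega\times Q;\rd)$. This rests on the periodic averaging identity
$$\lim_{\ep\to 0}\iO\big|\varphi(x,x/\ep)\big|^{p'}\,dx=\iOQ|\varphi(x,y)|^{p'}\,dy\,dx,$$
which is standard for $\varphi\in L^{p'}(\Omega;C^{\infty}_{\rm per}(\Rn;\rd))$; one proves it first for tensor products $\psi(x)\eta(y)$ via the Riemann--Lebesgue lemma for periodic functions, and extends by density and dominated convergence. Passing to the limit in H\"older's inequality then gives $|L(\varphi)|\leq \sup_\ep\|u_\ep\|_{L^p(\Omega;\rd)}\,\|\varphi\|_{L^{p'}(\Omega\times Q;\rd)}$, and density of the test space in $L^{p'}(\Omega\times Q;\rd)$ together with Riesz representation produces $v\in L^p(\Omega\times Q;\rd)$ with $L(\varphi)=\iOQ v\cdot\varphi\,dy\,dx$; this is the two-scale limit, $u_\ep\wkts v$. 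I expect this norm-passage/density step to be the main technical obstacle, since it is where the underlying two-scale geometry actually enters the argument.

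For the final assertion, specializing the admissible test to $\varphi(x,y)=\psi(x)$ with $\psi\in L^{p'}(\Omega;\rd)$ (which is trivially $Q$-periodic and smooth in $y$) gives
$$\iO u_\ep(x)\cdot\psi(x)\,dx\;\longrightarrow\;\iO\psi(x)\cdot\iQ v(x,y)\,dy\,dx,$$
so $u_\ep\wk \iQ v(\cdot,y)\,dy$ weakly in $L^p(\Omega;\rd)$, which completes the proposition.
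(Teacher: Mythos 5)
Your argument is the classical Nguetseng--Allaire compactness proof, and it is correct: the equiboundedness of the functionals $T_\ep$, the diagonal extraction along a countable dense subset of the (separable) test space, the passage from the $L^{p'}(\Omega;C^{\infty}_{\rm per})$-bound to the $L^{p'}(\Omega\times Q)$-bound via the periodic averaging identity $\|\varphi(\cdot,\cdot/\ep)\|_{L^{p'}(\Omega)}\to\|\varphi\|_{L^{p'}(\Omega\times Q)}$, Riesz representation, and the final specialization to $y$-independent test functions are all the standard steps, and you correctly flag the averaging identity as the place where the two-scale structure actually enters. The paper does not prove this proposition at all --- it simply invokes \cite[Theorem 1.2]{allaire} --- so your write-up is a faithful reconstruction of the proof being cited rather than an alternative to something in the text.
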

The following result will play a key role in the proof of the limsup inequality (see \cite[Proposition 2.4, Lemma 2.5 and Remark 2.6]{fonseca.kromer}).
\begin{proposition}
\label{prop:simple-2-scale}
Let $v\in L^p(\Omega;C_{\rm per}(\Rn;\rd))$ or $v\in L^p_{\rm per}(\Rn;C(\overline{\Omega};\rd))$. Then, the sequence $\{v_{\ep}\}$ defined as
$$v_{\ep}(x):=v\Big(x,\frac{x}{\ep}\Big)$$
is $p-$equiintegrable, and
$$v_{\ep}\sts v\quad\text{strongly two-scale in }L^p(\Omega;\rd).$$ 
\end{proposition}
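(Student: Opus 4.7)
The plan is to reduce all three assertions --- the norm identity $\|v_\ep\|_{L^p(\Omega;\R^d)} \to \|v\|_{L^p(\Omega\times Q;\R^d)}$, the weak two-scale convergence $v_\ep \wkts v$, and $p$-equiintegrability --- to a single ``Riemann--Lebesgue'' type lemma: for every $\Phi\in L^1(\Omega;C_{\rm per}(\R^N))$, or symmetrically every $\Phi\in L^1_{\rm per}(\R^N;C(\overline{\Omega}))$,
\[
\int_\Omega \Phi(x,x/\ep)\,dx \;\longrightarrow\; \int_\Omega\int_Q \Phi(x,y)\,dy\,dx \quad\text{as } \ep\to 0.
\]
Granted this lemma, the norm identity follows by applying it to $\Phi(x,y)=|v(x,y)|^p$ (admissible since $\|\Phi(x,\cdot)\|_\infty \le \|v(x,\cdot)\|^p_\infty$ lies in $L^1(\Omega)$), and the weak two-scale convergence follows by applying it to $\Phi(x,y)=v(x,y)\cdot\varphi(x,y)$ for a generic admissible test field $\varphi \in L^{p'}(\Omega;C^\infty_{\rm per}(\R^N;\R^d))$, using H\"older's inequality to check that $\Phi \in L^1(\Omega;C_{\rm per}(\R^N))$.

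I would prove the key lemma by a three-epsilon density argument. Simple tensor-product integrands of the form $\Phi(x,y)=\sum_{i=1}^k \chi_{A_i}(x)\psi_i(y)$, with $A_i\subset\Omega$ Borel and $\psi_i\in C_{\rm per}(\R^N)$, are dense in $L^1(\Omega;C_{\rm per}(\R^N))$ by separability of $C_{\rm per}(\R^N)$ and the approximation of Bochner-integrable functions by simple functions. For such $\Phi$ the claim collapses to the scalar statement
\[
\int_{A_i} \psi_i(x/\ep)\,dx \;\longrightarrow\; |A_i| \int_Q \psi_i(y)\,dy,
\]
which one checks first on trigonometric polynomials and then extends by uniform approximation in $C_{\rm per}(\R^N)$. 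For general $\Phi$ the approximation error is controlled by $|\Phi_n(x,x/\ep)-\Phi(x,x/\ep)| \le \|\Phi_n(x,\cdot)-\Phi(x,\cdot)\|_{C_{\rm per}}$, whose $L^1(\Omega)$ norm tends to $0$ by density; an analogous bound $\|\Phi(x/\ep,\cdot)-\Phi_n(x/\ep,\cdot)\|_{C(\overline{\Omega})}$, combined with scalar Riemann--Lebesgue applied to this $y$-measurable quantity, handles the second class $L^1_{\rm per}(\R^N;C(\overline{\Omega}))$.

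It remains to extract $p$-equiintegrability and to identify the main obstacle. In the first case $|v_\ep(x)|^p \le g(x):=\|v(x,\cdot)\|^p_{C_{\rm per}(\R^N)}$ is a \emph{fixed} function in $L^1(\Omega)$, so equiintegrability is immediate from absolute continuity of the integral of $g$. In the second case only $|v_\ep(x)|^p\le h(x/\ep)$ with $h(y):=\|v(y,\cdot)\|^p_{C(\overline{\Omega})}\in L^1_{\rm per}(\R^N)$ is available, and the dominating sequence itself depends on $\ep$; one invokes the key lemma with $\Phi=h$ to deduce weak convergence $h(\cdot/\ep)\rightharpoonup \int_Q h\,dy$ in $L^1(\Omega)$, whence the Dunford--Pettis theorem yields equiintegrability of $\{h(\cdot/\ep)\}$, and thus of $\{|v_\ep|^p\}$. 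The main technical obstacle is the subtle measurability question underlying the very definition of $v_\ep(x)=v(x,x/\ep)$: for an arbitrary representative of $v$ this pointwise evaluation is ambiguous, and the density reduction to continuous representatives in the appropriate variable is precisely what legitimizes it in an $\ep$-uniform way.
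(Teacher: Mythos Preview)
The paper does not supply a proof of this proposition; it simply cites \cite[Proposition 2.4, Lemma 2.5 and Remark 2.6]{fonseca.kromer}. Your self-contained argument is correct and follows the standard route one finds in that reference: reduce everything to a Riemann--Lebesgue lemma for the two mixed classes $L^1(\Omega;C_{\rm per})$ and $L^1_{\rm per}(\R^N;C(\overline\Omega))$, prove that lemma by density of simple tensors, and read off norm convergence, weak two-scale convergence, and $p$-equiintegrability.

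One step is slightly elliptic. In the second case you invoke your key lemma with $\Phi=h$ to deduce $h(\cdot/\ep)\rightharpoonup \int_Q h$ weakly in $L^1(\Omega)$, then apply Dunford--Pettis. But your lemma as stated only yields $\int_\Omega h(x/\ep)\,dx \to |\Omega|\int_Q h$; testing against a generic $g\in L^\infty(\Omega)$ would require $\Phi(x,y)=g(x)h(y)$ to lie in one of your two admissible classes, which fails when neither $g$ nor $h$ is continuous. The fix is easy --- either approximate $h\in L^1_{\rm per}$ by $h_n\in C_{\rm per}$ and use the periodicity bound $\|h(\cdot/\ep)-h_n(\cdot/\ep)\|_{L^1(\Omega)}\le C\|h-h_n\|_{L^1(Q)}$ (cover $\Omega$ by translates of $\ep Q$) to upgrade weak-$*$ $L^\infty$ convergence of $h_n(\cdot/\ep)$ to weak $L^1$ convergence of $h(\cdot/\ep)$, or bypass Dunford--Pettis entirely via the truncation $h=h\chi_{\{|h|\le M\}}+h\chi_{\{|h|>M\}}$ together with the same covering bound on the tail --- but as written this step needs one more line.
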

\subsection{The unfolding operator}
\label{subsection:unfolding}
We collect here the definition and some properties of the \emph{unfolding operator} (see e.g. \cite{cioranescu.damlamian.griso, cioranescu.damlamian.griso08, visintin, visintin1}).
\begin{definition}
Let $u\in L^p(\Omega;\rd)$. For every $\ep>0$, the unfolding operator
 $T_{\e}:L^p(\Omega;\rd)\to L^p(\R^N;L^p_{\rm per}(\Rn;\rd))$ is defined componentwise as
\be{eq:unfolding-operator}
T_{\e}(u)(x,y):=u\Big(\e\floor[\Big]{\frac{x}{\e}}+\e(y-\floor{y})\Big)\quad\text{for a.e. }x\in\Omega\text{ and }y\in\Rn,\ee
where $u$ is extended by zero outside $\Omega$ and $\floor{\cdot}$ denotes the least integer part. 
\end{definition}
The next proposition and the subsequent theorem allow to express the notion of two-scale convergence in terms of $L^p$ convergence of the unfolding operator.
\begin{proposition}
\label{prop:isometry} (see \cite{cioranescu.damlamian.griso, visintin1})
$T_{\ep}$ is a nonsurjective linear isometry from $L^p(\Omega;\rd)$ to $L^p(\R^N\times Q;\rd)$.
\end{proposition}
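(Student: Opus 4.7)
The plan is to verify in turn the three claims hidden in the statement: linearity, isometry, and failure of surjectivity.

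Linearity is immediate from the pointwise formula \eqref{eq:unfolding-operator}, since $u\mapsto u(\e\floor{x/\e}+\e(y-\floor{y}))$ is linear in $u$ once $x,y$ are fixed.

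For the isometry, I would tile $\R^N$ by the half-open lattice cubes $\e(k+Q_+)$ with $k\in\Z^N$ and $Q_+:=[0,1)^N$, on each of which $\floor{x/\e}=k$ is constant, and therefore $T_{\e}(u)(x,y)=u(\e k+\e(y-\floor{y}))$ is independent of $x\in\e(k+Q_+)$. Since $Q=(-1/2,1/2)^N$, the map $y\mapsto y-\floor{y}$ is a measure-preserving bijection from $Q$ onto $Q_+$ (up to a null set), and the affine change of variables $z=\e k+\e(y-\floor{y})$ identifies $Q_+$ with $\e(k+Q_+)$ with Jacobian $\e^{N}$. This gives
$$\int_Q |T_{\e}(u)(x,y)|^p\,dy=\e^{-N}\int_{\e(k+Q_+)}|u(z)|^p\,dz\qquad\text{for every }x\in\e(k+Q_+).$$
Integrating in $x$ over $\e(k+Q_+)$ multiplies by the volume $\e^{N}$ and cancels the factor $\e^{-N}$; summing over $k\in\Z^N$ and using that $u$ is extended by zero outside $\Omega$ yields
$$\int_{\R^N}\int_Q|T_{\e}(u)(x,y)|^p\,dy\,dx=\int_{\R^N}|u(z)|^p\,dz=\int_{\Omega}|u(z)|^p\,dz,$$
which is precisely the isometry identity (identifying $L^p_{\rm per}(\R^N;\R^d)$ with $L^p(Q;\R^d)$).

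For the failure of surjectivity I would exploit the structural observation already used above: $T_{\e}(u)(\cdot,y)$ is constant on each lattice cube $\e(k+Q_+)$, so the range of $T_{\e}$ is contained in the proper closed subspace of $L^p(\R^N\times Q;\R^d)$ consisting of functions that are $x$-piecewise constant on this $\e$-lattice. Exhibiting any function in $L^p(\R^N\times Q;\R^d)$ which is genuinely non-constant in $x$ within a single cube (e.g. a tensor product $\chi_{\e(k_0+Q_+)}(x)\,\psi(x)\,\varphi(y)$ with $\psi$ non-constant on $\e(k_0+Q_+)$) demonstrates that the inclusion is strict.

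The computation is essentially bookkeeping, and the only genuinely non-automatic point is handling the discrepancy between the centered cube $Q=(-1/2,1/2)^N$ appearing in the domain of $y$ and the shifted cube $Q_+=[0,1)^N$ on which $\floor{\cdot}$ naturally tiles; the measure-preserving identification $y\mapsto y-\floor{y}$ is the step that reconciles the two and is the place where the argument could most easily go astray.
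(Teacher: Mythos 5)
The paper does not actually prove this proposition; it simply cites \cite{cioranescu.damlamian.griso, visintin1}, so there is no internal argument to compare against. Your proof is correct and is exactly the standard argument found in the cited literature: the cube-by-cube change of variables $z=\e k+\e(y-\floor{y})$ with Jacobian $\e^N$, combined with the observation that $T_\e(u)(\cdot,y)$ is constant in $x$ on each lattice cube $\e(k+Q_+)$, yields both the isometry identity and nonsurjectivity. You also correctly isolated the one place where careless bookkeeping would break the argument, namely the mismatch between the centered cube $Q=(-\tfrac12,\tfrac12)^N$ over which $y$ ranges and the half-open cube $Q_+=[0,1)^N$ on which $\floor{\cdot}$ tiles, and you resolved it correctly by noting that $y\mapsto y-\floor{y}$ is a measure-preserving bijection of $Q$ onto $Q_+$ modulo a null set. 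One small cosmetic remark: for nonsurjectivity you do not actually need the subspace of $x$-piecewise-constant functions to be closed; containment of the range in any proper subset already rules out surjectivity, and your tensor-product example witnesses properness directly.
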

The following theorem provides an equivalent characterization of two-scale convergence in our framework (see \cite[Proposition 2.5 and Proposition 2.7]{visintin1},\cite[Theorem 10]{lukkassen.nguetseng.wall}).
\begin{theorem}
\label{thm:equivalent-two-scale}
Let $\Omega$ be an open bounded domain and let $v\in L^p(\Omega\times Q;\R^d)$. Assume that $v$ is extended to be $0$ outside $\Omega$. Then the following conditions are equivalent:
\begin{enumerate} 
\item[(i)]$u_{\ep}\wkts v\quad\text{weakly two scale in }L^p(\Omega\times Q;\rd),$
\item[(ii)] $T_{\ep}u_{\ep}\wk v$ weakly in $L^p(\R^N\times Q;\rd)$.
\end{enumerate}
Moreover, $$u^{\ep}\sts v\quad\text{strongly two scale in }L^p(\Omega\times Q;\rd)$$
if and only if
$$T_{\ep}u_{\ep}\to v\quad\text{strongly in }L^p(\R^N\times Q;\rd).$$
\end{theorem}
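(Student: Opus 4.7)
The backbone of the proof is the ``conservation of duality'' identity
\[
\int_\Omega u(x)\,w(x)\,dx \;=\; \int_{\R^N}\!\int_Q T_\ep(u)(x,y)\,T_\ep(w)(x,y)\,dy\,dx
\]
valid for all $u\in L^p(\Omega;\R^d)$ and $w\in L^{p'}(\Omega;\R^d)$. This follows from the definition of $T_\ep$ by a cell-by-cell computation (the same calculation that proves the isometry of Proposition \ref{prop:isometry}, applied to $uw\in L^1$ instead of $|u|^p$, using that $T_\ep$ is a pointwise operation so $T_\ep(uw)=T_\ep(u)T_\ep(w)$). The plan is to apply this identity with the oscillating test function $w(x):=\varphi(x,x/\ep)$ for $\varphi\in L^{p'}(\Omega;C_{\rm per}(\R^N;\R^d))$, thereby converting a two-scale test on $u_\ep$ into an ordinary $L^{p'}$--$L^p$ pairing for the unfolded sequence.

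The bridge between the two sides is the strong convergence
\[
T_\ep\bigl(\varphi(\cdot,\tfrac{\cdot}{\ep})\bigr)\;\longrightarrow\; \varphi \qquad\text{strongly in }L^{p'}(\R^N\times Q;\R^d).
\]
Unwinding \eqref{eq:unfolding-operator} and using the $Q$-periodicity of $\varphi$ in its second slot, one sees that $T_\ep(\varphi(\cdot,\cdot/\ep))(x,y)$ equals $\varphi$ evaluated at a point that differs from $(x,y)$ by $O(\ep)$ in the first slot and by a lattice translate in the second, so pointwise a.e.\ convergence to $\varphi(x,y)$ follows from continuity of $\varphi$ in $x$; the upgrade to strong $L^{p'}$ convergence is obtained through $p'$-equiintegrability, which is exactly what Proposition \ref{prop:simple-2-scale} supplies for such admissible $\varphi$, combined with Vitali's theorem.

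With these two ingredients the equivalence follows quickly. For (ii)$\Rightarrow$(i), insert the identity and pass to the limit: weak $L^p$ convergence of $T_\ep u_\ep$ against the strongly convergent $T_\ep(\varphi(\cdot,\cdot/\ep))$ yields $\int_\Omega u_\ep\varphi(x,x/\ep)\,dx\to\int_{\Omega\times Q}v\,\varphi\,dy\,dx$, i.e.\ $u_\ep\wkts v$. For (i)$\Rightarrow$(ii), Proposition \ref{prop:2-scale-compactness} gives $\{u_\ep\}$ bounded in $L^p(\Omega;\R^d)$, so by the isometry $\{T_\ep u_\ep\}$ is bounded in $L^p(\R^N\times Q;\R^d)$; any weak cluster point $w$ is supported in $\overline\Omega\times Q$ (since $T_\ep u_\ep$ vanishes outside an $O(\ep)$-neighborhood of $\Omega$) and, by the implication just established, has $v$ as its two-scale limit, so uniqueness of two-scale limits forces $w=v$ a.e., and a Urysohn-type argument promotes this to convergence of the entire sequence. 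For the strong-two-scale statement, the isometry gives $\|T_\ep u_\ep\|_{L^p(\R^N\times Q)}=\|u_\ep\|_{L^p(\Omega)}$ and $\|v\|_{L^p(\R^N\times Q)}=\|v\|_{L^p(\Omega\times Q)}$ (by the zero extension), so via uniform convexity of $L^p$ for $1<p<+\infty$ strong convergence on either side is equivalent to weak convergence plus convergence of norms, and the already-proven weak equivalence closes the loop.

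The main technical hurdle is the strong $L^{p'}$ convergence of the unfolded admissible test functions: pointwise a.e.\ convergence is immediate, but $p'$-equiintegrability relies essentially on the admissibility structure of $\varphi$ through Proposition \ref{prop:simple-2-scale}, and must be combined with a density argument to extend the identity from $\varphi\in L^{p'}(\Omega;C_{\rm per}(\R^N;\R^d))$ to general $\psi\in L^{p'}(\R^N\times Q;\R^d)$. Once that is in hand, everything else is a formal manipulation of the isometry identity together with weak compactness and uniqueness of two-scale limits.
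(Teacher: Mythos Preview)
The paper does not supply its own proof of this statement; it is quoted as a known result with references to Visintin and to Lukkassen--Nguetseng--Wall. Your outline is essentially the standard argument found in those references: use that $T_\ep$ preserves integrals (hence duality pairings), identify $T_\ep$ applied to an oscillating admissible test function, and then close the strong part via the isometry and uniform convexity of $L^p$. So there is no divergence in strategy to report.

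One small slip worth flagging: for $\varphi\in L^{p'}(\Omega;C_{\rm per}(\R^N;\R^d))$ the map $x\mapsto\varphi(x,\cdot)$ is only measurable, not continuous, so the clause ``pointwise a.e.\ convergence to $\varphi(x,y)$ follows from continuity of $\varphi$ in $x$'' is not literally justified at that level of generality. The repair is routine and you already gesture at it with the density remark: carry out the pointwise computation first for $\varphi$ continuous in both variables (say $\varphi\in C_c(\Omega)\otimes C_{\rm per}(\R^N;\R^d)$), where your argument is valid verbatim, and then pass to general $\varphi\in L^{p'}(\Omega;C_{\rm per}(\R^N;\R^d))$ by density, using the domination $|\varphi(x,x/\ep)|\le\sup_y|\varphi(x,y)|\in L^{p'}(\Omega)$ (which is precisely what underlies the $p'$-equiintegrability in Proposition~\ref{prop:simple-2-scale}) to control the remainder uniformly in $\ep$. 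With that adjustment the argument is complete.
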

The following proposition is proved in \cite[Proposition A.1]{fonseca.kromer}.
\begin{proposition}
\label{prop:conv-unf-op}
For every $u\in L^p(\Omega;\R^d)$ (extended by $0$ outside $\Omega$),
$$\|u-T_{\ep}u\|_{L^p(\R^N\times Q;\R^d)}\to 0$$
as $\ep\to 0$.
\end{proposition}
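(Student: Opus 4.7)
The plan is to establish the claim first on a dense subset of smooth compactly supported test functions and then to lift to all of $L^p$ by a three-$\ep$ argument that exploits the isometry property from Proposition \ref{prop:isometry}.

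First I would observe that for every $x \in \R^N$ and $y \in Q$ we have the pointwise estimate
\[
\Big|x - \Big(\ep \floor[\Big]{\tfrac{x}{\ep}} + \ep (y-\floor{y})\Big)\Big| \leq 2\ep\sqrt{N},
\]
since both the fractional part of $x/\ep$ and $y-\floor{y}$ lie in $[0,1)^N$ and are multiplied by $\ep$. Consequently, $T_{\ep}u(x,y)$ is the evaluation of $u$ at a point that is within distance $2\ep\sqrt{N}$ of $x$, uniformly in $y \in Q$.

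Next, for $u \in C_c^{\infty}(\R^N;\R^d)$ this pointwise control is immediately upgraded to a uniform one: $|T_{\ep} u(x,y) - u(x)| \leq 2\ep\sqrt{N}\,\|\nabla u\|_{\infty}$, while the supports of $u$ and $T_{\ep}u$ are contained in a common bounded set $K' \subset \R^N$ (a fixed neighborhood of $\mathrm{supp}\,u$, valid for all $\ep \leq 1$). Integrating over $K' \times Q$ then yields
\[
\|T_{\ep}u - u\|_{L^p(\R^N\times Q;\R^d)} \leq (2\ep\sqrt{N}) \|\nabla u\|_{\infty}\,|K'|^{1/p}|Q|^{1/p} \longrightarrow 0.
\]

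The density step is where Proposition \ref{prop:isometry} does the work. Given $u\in L^p(\Omega;\R^d)$ extended by $0$ to $\R^N$, pick $u_n \in C_c^{\infty}(\R^N;\R^d)$ with $u_n \to u$ in $L^p(\R^N;\R^d)$. Writing
\[
\|T_{\ep}u - u\|_{L^p(\R^N\times Q)} \leq \|T_{\ep}(u - u_n)\|_{L^p(\R^N\times Q)} + \|T_{\ep}u_n - u_n\|_{L^p(\R^N\times Q)} + \|u_n - u\|_{L^p(\R^N\times Q)},
\]
the first term equals $\|u - u_n\|_{L^p(\Omega)}$ by the isometry, and the last term equals $\|u_n - u\|_{L^p(\R^N)}$ because the integrand is $y$-independent and $|Q|=1$. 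Both are made arbitrarily small by choosing $n$ large, uniformly in $\ep$; the middle term tends to $0$ as $\ep \to 0$ by the smooth case. A standard $\ep/3$-style conclusion finishes the argument.

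The only mild obstacle I anticipate is a careful verification of the uniform support control in the smooth step, since $T_{\ep}u$ is defined in terms of $\ep \floor{x/\ep}$ which is not the identity on $\mathrm{supp}\,u$; however, the $2\ep\sqrt{N}$-translation bound above makes this bookkeeping routine. Everything else is a direct consequence of the isometry and of density of $C_c^{\infty}$ in $L^p$.
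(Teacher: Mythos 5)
Your proof is correct, and it is the standard argument for this fact. The paper does not prove the proposition itself but simply cites \cite[Proposition~A.1]{fonseca.kromer}; the proof there is the same density-plus-isometry argument you use, so you have in effect reconstructed the referenced proof.

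Two small points worth tightening. First, the pointwise translation bound can be sharpened: since both $\{x/\ep\}$ and $y-\floor{y}$ lie in $[0,1)^N$, the difference of the argument of $u$ from $x$ has each component in $(-\ep,\ep)$, so the bound is $\ep\sqrt{N}$, not $2\ep\sqrt{N}$; this is harmless but worth stating precisely. Second, you apply Proposition~\ref{prop:isometry} to $u-u_n$ where $u_n\in C_c^\infty(\R^N;\R^d)$ need not be supported in $\Omega$, whereas the proposition as stated concerns $L^p(\Omega;\R^d)$; you should note that the change-of-variables computation behind the isometry works verbatim on all of $L^p(\R^N;\R^d)$, so that $\|T_\ep(u-u_n)\|_{L^p(\R^N\times Q)}=\|u-u_n\|_{L^p(\R^N)}$ (and this, not $\|\cdot\|_{L^p(\Omega)}$, is the norm that appears). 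With those clarifications the three-$\ep$ argument closes cleanly.
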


\section{$\pdeor$-quasiconvex functions}
\label{section:pt-quas}
In this section we recall the notion of $\pdeor$-quasiconvexity and $\pdeor$-quasiconvex envelope, and we provide some examples of $\pdeor$-quasiconvex functions in the case in which $\pdeor$ has variable coefficients.

We start by recalling the main definitions when $\pdeor=\pdeor^c$, where $\pdeor^c$ is a first order differential operator with constant coefficients, that is, for every $u\in L^p(\Omega;\R^d)$,
$$\pdeor^c u(x):=\sum_{i=1}^N A^i_c\frac{\partial u(x)}{\partial x_i}\in W^{-1,p}(\Omega;\R^l),$$
with $A^i_c\in \M^{l\times d}$ for $i=1,\cdots, N$.

\begin{definition}
Let $f:\Omega\times \rd\to [0,+\infty)$ be a Carath\'eodory function, let $\pdeor^c$ be a first order differential operator with constant coefficients, and consider the set
$$\mathcal{C}_{\rm const}:=\Big\{w\in C^{\infty}_{\rm per}(\R^N;\R^d):\,\int_Q{w(y)\,dy}=0\quad\text{and}\quad \sum_{i=1}^N A^i_c\frac{\partial w(y)}{\partial y_i}=0\Big\}.$$
The \emph{$\pdeor^c$-quasiconvex envelope} of $f$ is the function $Q^{\pdeor^c}f:\Omega\times\R^d\to [0,+\infty)$, given by
\be{eq:def-A-quasiconvex-envelope-const}
Q^{\pdeor^c}f(x,\xi):=\inf\Big\{\iq f(x,\xi+w(y))\,dy:\quad w\in\mathcal{C}_{\rm const}\Big\},
\ee
for a.e. $x\in\Omega$ and for all $\xi\in \R^d$.

We say that $f$ is \emph{$\pdeor^c$-quasiconvex} if
$$f(x,\xi)=Q^{\pdeor^c} f(x,\xi)\quad\text{for a.e. }x\in\Omega\text{ and for all } \xi\in\rd.$$
\end{definition}

Similarly, in the case in which $\pdeor$ depends on the space variable, the definitions of $\pdeor$-quasiconvex envelope and $\pdeor$-quasiconvex function read as follows. 

\begin{definition}
Let $f:\Omega\times \rd\to [0,+\infty)$ be a Carath\'eodory map, let $\pdeor$ be a first order differential operator with variable coefficients, and for every $x\in\Omega$ consider the set
\be{eq:def-c-x}
\cx:=\Big\{w\in C^{\infty}_{\rm per}(\R^N;\R^d):\,\int_Q{w(y)\,dy}=0\quad\text{and}\quad \sum_{i=1}^N A^i(x)\frac{\partial w(y)}{\partial y_i}=0 \Big\}.
\ee The \emph{$\pdeor$-quasiconvex envelope} of $f$ is the function $\qa f:\Omega\times\R^d\to[0,+\infty)$, given by
\be{eq:def-A-quasiconvex-envelope}
\qa f(x,\xi):=\inf\Big\{\iq f(x,\xi+w(y))\,dy:\quad w\in \cx\Big\},
\ee
for a.e. $x\in\Omega$ and for all $\xi\in \R^d$.

We say that $f$ is \emph{$\pdeor$-quasiconvex} if
$$f(x,\xi)=\qa f(x,\xi)\quad\text{for a.e. }x\in\Omega\text{ and for all } \xi\in\rd.$$
\end{definition}
We finally introduce, for every $x_0\in\Omega$, the notion of pointwise $\pdeor(x_0)$-quasiconvex envelope and pointwise $\pdeor(x_0)$-quasiconvexity.
\begin{definition}
Let $x_0\in\Omega$. Let $f:\Omega\times \rd\to [0,+\infty)$ be a Carath\'eodory map and let $\pdeor$ be a first order differential operator with variable coefficients. The \emph{pointwise $\pdeor(x_0)$-quasiconvex envelope} of $f$ is the function $Q^{\pdeor(x_0)}f:\Omega\times\R^d\to[0,+\infty)$ given by
$$Q^{\pdeor(x_0)}f(x,\xi):=\inf\Big\{\iq f(x,\xi+w(y))\,dy:\quad w\in \mathcal{C}_{x_0}\Big\}$$
for a.e. $x\in\Omega$ and for all $\xi\in\rd$.

We say that $f$ is pointwise $\pdeor(x_0)$-quasiconvex in $x_0\in\Omega$, if
$$f(x,\xi)=Q^{\pdeor(x_0)}f(x,\xi)\quad\text{for a.e. }x\in\Omega\text{ and for all }\xi\in\rd.$$
\end{definition}
We stress that $\pdeor$-quasiconvexity and pointwise $\pdeor(x)$-quasiconvexity are related by the following ``fixed point" relation:
\be{eq:fixed-point-id}\qa f(x,\xi)=Q^{\pdeor(x)}f(x,\xi)\quad\text{for a.e. }x\in\Omega\text{ and for all }\xi\in\R^d.\ee

The remaining part of this section is devoted to illustrating these concepts with some explicit examples of $\pdeor$-quasiconvex functions. We first exhibit an example where $\pdeor$-quasiconvexity reduces to ${\pdeor^c}$-quasiconvexity for a suitable operator ${\pdeor^c}$ with constant coefficients.
\begin{example}
\label{ex:basic}
Let $1<p<+\infty$ and define
$$f(x,\xi):=a(x)b(\xi)\quad\text{for a.e }x\in\Omega\text{ and every }\xi\in\R^d,$$ with $a\in L^p(\Omega)$ and $b\in C(\R^d)\cap L^p(\R^d)$. In order for $f$ to be $\pdeor$-quasiconvex, the function $b$ must satisfy
$$b(\xi)=\inf\Big\{\iq b(\xi+w(y))\,dy:\,\quad w\in \cup_{x\in\Omega}\,\cx\Big\}.$$
Consider the case in which $\cx$ is the same for every $x$, for example when the differential constraint is provided by the operator:
$$\pdeor w(x):=\sum_{i=1}^NM(x)A^i_c\frac{\partial w(x)}{\partial x_i},$$
where $M:\Omega\to \M^{l\times l}$ and $\det M(x)>0$ for every $x\in\Omega$. In this case, 
$$\cx=\Big\{w\in C^{\infty}_{\rm per}(\Rn;\rd):\iq w(y)\,dy=0\text{ and }{\pdeor^c}w=0\Big\}$$  for every $x$, where 
$$\pdeor^c w(x):=\sum_{i=1}^NA^i_c\frac{\partial w(x)}{\partial x_i}.$$ Hence, $f$ is $\pdeor$-quasiconvex if and only if $b$ is $\pdeor^c$-quasiconvex.
\end{example}
In the previous example, $\pdeor$-quasiconvexity could be reduced to $\pdeor^c$-quasiconvexity owing to the fact that the class $\cx$ was constant in $x$. We provide now an example where an analogous phenomenon occurs, despite the fact that $\cx$ varies with respect to $x$.
To be precise, we consider the case in which $\pdeor$ is a smooth perturbation of the divergence operator. In this situation, the $\pdeor$-quasiconvex envelope of $f$ coincides with its convex envelope.
\begin{example}
\label{ex:traditional}
We consider a smooth perturbation of the divergence operator in a set $\Omega\subset \R^2$, that is
$$\pdeor u:=\Big(\begin{array}{cc}a(x)& 0\\0&1\end{array}\Big)\nabla u\quad\text{for every }u\in L^p(\Omega;\R^2),\quad 1<p<+\infty,$$
with $a\in C(\overline{\Omega})$  and 
$$\tfrac 12\leq a(x)<1\quad\text{for every }x\in\Omega.$$
We notice that
$$\ker \A(x,\lambda)=\Big\{\xi\in\R^2:a(x)\lambda_1\xi_1+\lambda_2\xi_2=0\Big\},$$
and therefore 
$${\rm rank }\,\A(x,\lambda)=1\quad\text{for every }x\in\Omega\text{ and }\lambda\in \R^2\setminus\{0\}.$$
In this situation, the class $\cx$ depends on $x$, since we have
$$\cx=\Big\{w\in C^{\infty}_{\rm per}(\R^2;\R^2):\quad\int_Q w(y)\,dy=0\quad\text{and}\quad a(x)\frac{\partial w_1(y)}{\partial y_1}+\frac{\partial w_2(y)}{\partial y_2}=0\Big\},$$
 although $$\cup_{\lambda\in\mathbb{S}^1}\ker \A(x,\lambda)=\R^2.$$
Let now $f:\rd\to [0,+\infty)$ be a continuous map.
By \cite[Proposition 3.4]{fonseca.muller}, it follows that
$\qa f(\xi)$ coincides with the convex envelope of $f$ evaluated at $\xi$, exactly as in the case of the divergence operator (see \cite[Remark 3.5 (iv)]{fonseca.muller}).
\end{example}
We conclude this section with an example in which the notion of $\pdeor$-quasiconvexity can not be reduced to $\pdeor^c$-quasiconvexity with respect to a constant operator $\pdeor^c$. 
\begin{example}
\label{ex:nontraditional}
Here we consider a smooth perturbation of the {\rm curl} operator in a set $\Omega\subset \R^2$. Let $\pdeor :L^p(\Omega;\R^2)\to W^{-1,p}(\Omega;\R^4)$ be given by
$$\pdeor u=\sum_{i=1,2}A^i(x)\frac{\partial u}{\partial x_i}\quad\text{for }u\in L^p(\Omega;\R^2),\quad1<p<+\infty,$$ 
where
$$A^i_{(j,k),(q)}(x):=a_i(x)\delta_{ij}\delta_{qk}-a_i(x)\delta_{ik}\delta_{qj}\,\text{ for every }x\in\Omega,\quad1\leq j,k,q\leq 2,$$

with $a_2(x)\equiv 1$, and $a_1\in C(\overline{\Omega})$ satisfies
$$\tfrac 12\leq a_1(x)\leq 1\quad\text{for every }x\in{\Omega}.$$
We first notice that 
\begin{eqnarray*}
\ker\mathbb{A}(x,\lambda)&:=&\Big\{\xi\in \R^2:\, \lambda_1 a_1(x)\xi_2=\lambda_2 \xi_1\Big\}\\
&=&\Big\{\xi\in \R^2:\,\xi_2=\alpha \lambda_2\text{ and } \xi_1=\alpha a_1(x)\lambda_1,\,\alpha\in\R\Big\},
\end{eqnarray*}
hence 
$${\rm rank }\,\mathbb{A}(x,\lambda)=3\quad\text{for every }x\in\Omega,\,\lambda\in\mathbb{S}^1.$$
The class $\cx$ depends on $x$ and there holds
\ba{eq:cx-example}
&\cx=\Big\{w\in C^{\infty}_{\rm per }(\R^2;\R^2):\,\int_Q w(y)\,dy=0\quad\text{and}\quad a_1(x)\frac{\partial w_{2}(y)}{\partial y_1}=\frac{\partial w_{1}(y)}{\partial y_2}\Big\}\\
\nn&\quad=\Big\{w\in C^{\infty}_{\rm per }(\R^2;\R^2):\,\int_Q w(y)\,dy=0,\,w_1(y)=a_1(x)\frac{\partial \varphi(y)}{\partial y_1},\\
\nn&\qquad\text{and}\quad w_2(y)=\frac{\partial \varphi(y)}{\partial y_2},\,\text{where }\varphi\in C^{\infty}_{\rm per}(\R^2)\Big\}.
\end{align}

Let now $g:\Omega\times \R^2\to [0,+\infty)$ be a quasiconvex function and let \mbox{$f:\Omega\times \R^2\to [0,+\infty)$} be defined as
$$f(x,\xi):=g\Big(x,\Big(\frac{\xi_1}{a_1(x)},\xi_2\Big)\Big) \text{ for a.e. }x\in\Omega\text{ and for every }\xi\in\R^2.$$
We claim that
$$\qa f(x,\xi)=f(x,\xi)\quad\text{for a.e. }x\in\Omega\text{ and for every }\xi\in\R^2.$$
Indeed, by \eqref{eq:cx-example} there holds
\bas
&\inf_{w \in \cx} \int_Q f(x,\xi+w(y))\,dy\\
&\quad=\inf\Big\{\int_Q f\Big(x,\Big(\xi_1+a_1(x)\frac{\partial \varphi_1(y)}{\partial y_1},\xi_2+\frac{\partial \varphi_2(y)}{\partial y_2}\Big)\Big)\,dy:\,\varphi\in C^{\infty}_{\rm per}(\R^2)\Big\}\\
&\quad=\inf\Big\{\int_Q g\Big(x,\Big(\frac{\xi_1}{a_1(x)}+\frac{\partial\varphi(y)}{\partial y_1},\xi_2+\frac{\partial\varphi(y)}{\partial y_2}\Big)\Big)\,dy:\,\varphi\in C^{\infty}_{\rm per}(\R^2)\Big\}\\
&\quad=Qg\Big(x,\Big(\frac{\xi_1}{a_1(x)},\xi_2\Big)\Big)
\end{align*}
for a.e. $x\in\Omega$ and for every $\xi\in\R^2$,
where $Qg$ denotes the quasiconvex envelope of the function $g$. The claim follows by the definition of $f$ and the quasiconvexity of $g$.
\end{example}

 \section{A homogenization result for $\pdeor$-free fields}
 \label{section:A-free}
In this section we prove a homogenization result for oscillating integral energies under weak $L^p$ convergence of $\pdeor-$vanishing maps. Fix $1<p<+\infty$ and consider a function $f:\Omega\times\Rn\times\rd\to[0,+\infty)$ satisfying \eqref{eq:hp-f-1}--\eqref{eq:growth-p-f-3}.
 
 
 In analogy with the case of constant coefficients (see \cite[Definition 2.9]{fonseca.kromer}), we define the class of $\pdeor$-free fields as the set
 \be{eq:def-A-f}\mathscr{F}:=\Big\{v\in L^p(\Omega\times Q;\R^d):\,\pdeor_y v=0\quad\text{and }\pdeor_x \iQ v(x,y)\,dy=0\Big\},\ee
 where both the previous differential conditions are in the sense of $W^{-1,p}$.
 
 We aim at obtaining a characterization of the homogenized energy
 \ba{eq:liminf-to-charact}
 &\inf\Big\{\liminf_{\ep\to 0} \iO f\Big(x,\frac{x}{\ep},u_{\ep}(x)\Big)\,dx:\,\,u_{\ep}\wk u\quad\text{weakly in }L^p(\Omega;\R^d)\\
 \nn&\quad\text{and }\pdeor u_{\ep}\to 0\quad\text{strongly in }W^{-1,p}(\Omega;\rl)\Big\}.
 \end{align}
 
 We start with a preliminary lemma, which will allow us to define a pointwise projection operator. We will be using the notation introduced in Sections \ref{section:prel} and \ref{section:pt-quas}. 
 
 \begin{lemma}
 \label{lemma:proj-operator}
 Let $1<p<+\infty$. Let $A^i\in C^{\infty}_{\rm per}(\R^N;\M^{l\times d})$, $i=1\cdots, N$, and assume that the associated first order differential operator $\pdeor$ satisfies \eqref{eq:constant-rank-condition}. Then, for every $x\in\Omega$ there exists a projection operator 
 $$\Pi(x): L^p(Q;\rd)\to L^p(Q;\rd)$$ such that 
 \begin{enumerate}
\item[(P1)]$\Pi(x)$ is linear and bounded, and vanishes on constant maps,
\item[(P2)]$\Pi(x)\circ \Pi(x)\psi(y)=\Pi(x)\psi(y)\quad\text{and}\quad\pdeor_y(x) (\Pi(x)\psi(y))=0\quad\text{in }W^{-1,p}(Q;\rl)$ for a.e. $x\in\Omega$, {for every }$\psi\in L^p(Q;\rd)$,
\item[(P3)]there exists a constant $C=C(p)>0$, independent of $x$, such that 
$$\|\psi(y)-\Pi(x)\psi(y)\|_{L^p(Q;\rd)}\leq C\|\pdeor_y(x) \psi(y)\|_{W^{-1,p}(Q;\rl)}$$
{for a.e. }$x\in\Omega$, { for every }$\psi\in L^p(Q;\rd)$ with $\iQ\psi(y)\,dy=0$,
\item[(P4)]if $\{\psi_n\}$ is a bounded $p$-equiintegrable sequence in $L^p(Q;\R^d)$, then $\{\Pi(x)\psi_n(y)\}$ is a $p$-equiintegrable sequence in $L^p(\Omega\times Q;\R^d)$,
\item[(P5)] if $\varphi\in C^1(\Omega;C^{\infty}_{\rm per}(\Rn;\rd))$ then the map $\varphi_{\Pi}$, defined by
 $$\varphi_{\Pi}(x,y):=\Pi(x)\varphi(x,y)\quad\text{for every }x\in\Omega\text{ and }y\in\R^N,$$
 satisfies $\varphi_{\Pi}\in C^1(\Omega; C^{\infty}_{\rm per}(\R^N;\rd))$.
 \end{enumerate}
   \end{lemma}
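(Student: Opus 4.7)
The plan is to define $\Pi(x)$ pointwise in $x$ as the Fourier multiplier operator with symbol $\PP(x,\lambda)$, mirroring the constant-coefficient construction of \cite[Lemma 2.14]{fonseca.kromer} but with the coefficients frozen at the macroscopic point $x$. Concretely, for $\psi\in L^p(Q;\rd)$ expanded as $\psi(y)=\sum_{\lambda\in\Z^N}\hat\psi(\lambda)e^{2\pi i\lambda\cdot y}$, I would set
\[
\Pi(x)\psi(y):=\sum_{\lambda\in\Z^N\setminus\{0\}}\PP(x,\lambda)\,\hat\psi(\lambda)\,e^{2\pi i\lambda\cdot y}.
\]
Discarding the zero mode yields (P1): the operator is linear and $\Pi(x)\xi\equiv 0$ for every constant $\xi\in\rd$. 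Property (P2) is algebraic: $\PP(x,\lambda)^2=\PP(x,\lambda)$ gives idempotency, while $\A(x,\lambda)\PP(x,\lambda)=0$ produces $\pdeor_y(x)(\Pi(x)\psi)=0$ in $W^{-1,p}(Q;\rl)$.

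The heart of the argument is (P3). I would use the defining identity $I-\PP(x,\lambda)=\QQ(x,\lambda)\A(x,\lambda)$ to write, for $\psi$ of zero mean,
\[
\psi(y)-\Pi(x)\psi(y)=\frac{1}{2\pi i}\sum_{\lambda\neq 0}\QQ(x,\lambda)\,\widehat{\pdeor_y(x)\psi}(\lambda)\,e^{2\pi i\lambda\cdot y}.
\]
Since $\QQ(x,\cdot)$ is $(-1)$-homogeneous and $C^\infty$ off the origin by Proposition \ref{prop:properties-P-Q}, factoring out $|\lambda|^{-1}$ exhibits the above operator as the composition of the isomorphism $W^{-1,p}(Q;\rl)\to L^p(Q;\rl)$ induced by the multiplier $|\lambda|^{-1}$ with the $0$-homogeneous smooth multiplier $\QQ(x,\lambda/|\lambda|)$. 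A periodic Mikhlin multiplier theorem then yields an $L^p\to L^p$ bound whose constant depends only on $p$ and on finitely many $C^k$ bounds of the symbol on $\mathbb{S}^{N-1}$.

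Property (P4) is a standard consequence of Calderón--Zygmund theory applied to the zero-order multiplier $\Pi(x)$ combined with (P3); the argument is identical to the one in \cite[Lemma 2.14]{fonseca.kromer} once the singular-integral constants are known to be uniform in $x$. For (P5), I would observe that $\hat\varphi(x,\lambda)$ is $C^1$ in $x$ and decays faster than any polynomial in $|\lambda|$ because $\varphi(x,\cdot)\in C^\infty_{\rm per}(\Rn;\rd)$, so the series defining $\varphi_\Pi$ converges absolutely together with all its $x$- and $y$-derivatives once combined with the $C^\infty$-regularity of $\PP(\cdot,\lambda)$ from Proposition \ref{prop:properties-P-Q}; this delivers $\varphi_\Pi\in C^1(\Omega;C^\infty_{\rm per}(\Rn;\rd))$.

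The principal obstacle, and the place where the variable-coefficient setting genuinely departs from the constant-coefficient one, is securing the uniformity in $x$ of the constants in (P3) and (P4); without it, one cannot later integrate the pointwise estimates over $x$ to obtain global bounds on $\Omega\times Q$. I expect the cleanest remedy to be the observation that, by Proposition \ref{prop:properties-P-Q} together with the smoothness and periodicity of the $A^i$, all the $\omega$-derivatives of $\PP(x,\omega)$ and $\QQ(x,\omega)$ are continuous, and hence bounded, on the compact set $\overline\Omega\times\mathbb{S}^{N-1}$, which then feeds into the Mikhlin constants uniformly in $x\in\Omega$.
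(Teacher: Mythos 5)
Your construction of $\Pi(x)$ as the Fourier multiplier with symbol $\PP(x,\lambda)$, together with your arguments for (P1), (P2), (P4) and (P5), coincide with the paper's (one bibliographic slip: the paper's source for the pointwise projection is \cite[Lemma 2.14]{fonseca.muller}, not fonseca.kromer). The substantive difference is in (P3). You write $\psi-\Pi(x)\psi$ as the $(-1)$-homogeneous multiplier $\QQ(x,\lambda)$ applied to $\pdeor_y(x)\psi$, factor $\QQ(x,\lambda)=|\lambda|^{-1}\QQ(x,\lambda/|\lambda|)$ so as to compose the $W^{-1,p}\to L^p$ isomorphism with a smooth $0$-homogeneous symbol, and invoke a periodic Mikhlin theorem; the constant is uniform in $x$ because $\PP,\QQ\in C^\infty$ (Proposition \ref{prop:properties-P-Q}) and the $A^i$ are periodic, so the relevant symbol derivatives are bounded on $\R^N\times\mathbb{S}^{N-1}$. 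The paper's proof instead estimates $|\psi(y)-\Pi(x)\psi(y)|$ pointwise in $y$ on the Fourier side: it separates the $|\lambda|^{-1}$ factor by discrete H\"older, which introduces $\sum_{\lambda\ne 0}|\lambda|^{-p}$ (resp.\ $\sum_{\lambda\ne 0}|\lambda|^{-p'}$), and closes with Hausdorff--Young (resp.\ Parseval), arriving at a bound in terms of $\|\pdeor_y(x)\psi\|_{L^p}$. Your Mikhlin route is preferable on two counts: it produces the $W^{-1,p}$ norm on the right-hand side, which is what (P3) asserts and what is invoked later (e.g.\ in Lemma \ref{lemma:A-free-fields} and Proposition \ref{thm:liminf-A-free-2}); and it avoids the summability requirements $\sum_{\lambda\ne 0}|\lambda|^{-p}<\infty$ (needing $p>N$) and $\sum_{\lambda\ne 0}|\lambda|^{-p'}<\infty$ (needing $p'>N$), which fail once $N\ge 2$. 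So your proposal is correct, and for (P3) it is the more robust argument --- essentially that of \cite[Lemma 2.14]{fonseca.muller}, carried out with the macroscopic variable frozen and with uniformity in $x$ secured by the compactness observation you make at the end.
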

 \begin{proof}
 For every $x\in\Omega$, let $\Pi(x)$ be the projection operator provided by \cite[Lemma 2.14]{fonseca.muller}. 
 Properties (P1) and (P2) follow from \cite[Lemma 2.14]{fonseca.muller}. 
 
 In order to prove (P3), fix $x\in\Omega$ and $\psi\in C^{\infty}_{\rm per}(\Rn;\rd)$. Let $\A$, $\PP$ and $\QQ$ be the operators defined in Subsection \ref{subsection:prel}. Writing the operator $\Pi(x)$ explicitly, we have
 $$\Pi(x)\psi(y):=\sum_{\lambda\in\mathbb{Z}^N\setminus\{0\}}\PP(x,\lambda)\hat{\psi}(\lambda)e^{2\pi i y\cdot\lambda},$$
 where
 $$\hat{\psi}(\lambda):=\iq \psi(y)e^{-2\pi i y\cdot\lambda}\,dy,\quad\text{for every }\lambda\in\Z^N\setminus\{0\}$$
 are the Fourier coefficients associated to $\psi$. 
 
  By the (-1)-homogeneity of the operator $\mathbb{Q}$ (see Proposition \ref{prop:properties-P-Q}) we deduce 
  \ba{eq:proj-ast1}
  &|\psi(y)-\Pi(x)\psi(y)|^p=\Big|\sum_{\lambda\in\Z^N\setminus\{0\}}\mathbb{Q}(x,\lambda)\mathbb{A}(x,\lambda)\hat{\psi}(\lambda)e^{2\pi iy\cdot\lambda}\Big|^p\\
  \nn&\quad=\Big|\sum_{\lambda\in\Z^N\setminus\{0\}}\frac{1}{|\lambda|}\mathbb{Q}\Big(x,\frac{\lambda}{|\lambda|}\Big)\mathbb{A}(x,\lambda)\hat{\psi}(\lambda)e^{2\pi iy\cdot\lambda}\Big|^p.
  \end{align}
  For $1<p<2$, by the smoothness of $\mathbb{Q}$ and by applying first H\"older's inequality and then Hausdorff-Young inequality, we obtain the estimate
  \ba{eq:proj-ast2}
 & |\psi(y)-\Pi(x)\psi(y)|^p\\
 \nn&\quad\leq \Big(\max_{\lambda\in\Z^N,\,|\lambda|=1}\|Q(x,\lambda)\|\Big)^p\Big(\sum_{\lambda\in\Z^N\setminus\{0\}}\frac{1}{|\lambda|^p}\Big)\Big(\sum_{\lambda\in\Z^N\setminus\{0\}}|\mathbb{A}(x,\lambda)\hat{\psi}(\lambda)|^{p'}\Big)^{\frac{p}{p'}}\\
  &\nn\quad\leq C\Big(\sum_{\lambda\in\Z^N\setminus\{0\}}|\mathbb{A}(x,\lambda)\hat{\psi}(\lambda)|^{p'}\Big)^{\frac{p}{p'}}\leq C\|\pdeor_y(x)\psi(y)\|^p_{L^p(Q;\R^d)},
  \end{align}
  where we used the fact that 
  \be{eq:proj-ast3}
  |\mathbb{A}(x,\lambda)\hat{\psi}(\lambda)|\leq C|\widehat{\pdeor_y(x)\psi(y)}(\lambda)|
  \ee
  for every $x\in\Omega$ and $\lambda\in\Z^N\setminus\{0\}$, by the definition of the Fourier coefficients, and where both constants in \eqref{eq:proj-ast2} and \eqref{eq:proj-ast3} are independent of $\lambda$ and $x$.
  
  Consider now the case in which $p\geq 2$. By \eqref{eq:proj-ast1} we have
  \ba{eq:proj-star1}
  & |\psi(y)-\Pi(x)\psi(y)|^p\\
 \nn&\quad\leq \Big(\max_{\lambda\in\Z^N,\,|\lambda|=1}\|Q(x,\lambda)\|\Big)^p\Big(\sum_{\lambda\in\Z^N\setminus\{0\}}\frac{1}{|\lambda|^{p'}}\Big)^{\frac{p}{p'}}\Big(\sum_{\lambda\in\Z^N\setminus\{0\}}|\mathbb{A}(x,\lambda)\hat{\psi}(\lambda)|^{p}\Big)\\
  &\nn\quad\leq C\Big(\sum_{\lambda\in\Z^N\setminus\{0\}}|\mathbb{A}(x,\lambda)\hat{\psi}(\lambda)|^{p}\Big)\\
  &\nn\quad\leq C\Big(\sup_{\lambda\in\Z^N\setminus\{0\}}|\mathbb{A}(x,\lambda)\hat{\psi}(\lambda)|^{p-2}\Big)\sum_{\lambda\in\Z^N\setminus\{0\}}|\mathbb{A}(x,\lambda)\hat{\psi}(\lambda)|^{2}.
 \end{align}
By the definition of Fourier coefficients and by H\"older's inequality we have
$$|\mathbb{A}(x,\lambda)\hat{\psi}(\lambda)|\leq C\|\pdeor_y(x)\psi(y)\|_{L^p(Q;\R^l)}$$  
for every $x\in\Omega$ and $\lambda\in\Z^N\setminus\{0\}$. In view of \cite[Theorem 2.9]{fonseca.muller},
$$\sum_{\lambda\in\Z^N\setminus\{0\}}|\mathbb{A}(x,\lambda)\hat{\psi}(\lambda)|^{2}=\|\pdeor_y(x)\psi(y)\|^2_{L^2(Q;\R^l)}.$$
Therefore by \eqref{eq:proj-star1}, applying again H\"older's inequality,
\ba{eq:proj-point1}
& |\psi(y)-\Pi(x)\psi(y)|^p\\
 &\nn\quad\leq C\|\pdeor_y(x)\psi(y)\|_{L^p(Q;\R^l)}^{p-2}\|\pdeor_y(x)\psi(y)\|^2_{L^2(Q;\R^l)}\leq C\|\pdeor_y(x)\psi(y)\|_{L^p(Q;\R^l)}^{p}
\end{align}
where the constant $C$ is independent of $x$ and $y$. Property (P3) follows by \eqref{eq:proj-ast2} and \eqref{eq:proj-point1} via a density argument.
 
 (P4) follows directly from (P3), arguing as in the proof of \cite[Lemma 2.14 (iv)]{fonseca.muller}.
 
 Let now $\varphi\in C^1(\Omega;C^{\infty}_{\rm per}(\Rn;\rd))$. The regularity of the map $\varphi_{\Pi}$ is a direct consequence of Proposition \ref{prop:properties-P-Q}, the definition of $\Pi$ and the regularity of $\pdeor$. Indeed,
 \be{eq:direct-def}\varphi_{\Pi}(x,y):=\sum_{\lambda\in\Z^N\setminus\{0\}}\PP(x,\lambda)\hat{\varphi}(x,\lambda)e^{2\pi i y\cdot\lambda},\ee
 for every $x\in\Omega$ and $y\in \R^N$, where
$$\hat{\varphi}(x,\lambda):=\iq \varphi(x,\xi)e^{-2\pi i\xi\cdot\lambda}\,d\xi$$
 for every $x\in\Omega$ and $\lambda\in\Z^N\setminus\{0\}$. By the regularity of $\varphi$ and by \cite[Theorem 2.9]{fonseca.muller} we obtain the estimate
 $$\Big(4\pi^2\sum_{\lambda\in\Z^N\setminus\{0\}}|\lambda|^2|\hat{\varphi}(x,\lambda)|^2\Big)^{\tfrac12}\leq\Big\|\sum_{i=1}^N\frac{\partial\varphi}{\partial y_i}(x,y)\Big\|_{L^2(Q;\R^d)}\leq C$$
 for every $x\in\Omega$, hence by Proposition \ref{prop:properties-P-Q} and Cauchy-Schwartz inequality there holds
 \ba{eq:proj-point2}
 &\sum_{\lambda\in\Z^N\setminus\{0\},\,|\lambda|\geq n}|\mathbb{P}(x,\lambda)\hat{\varphi}(x,\lambda)e^{2\pi i y\cdot\lambda}|\leq C\sum_{\lambda\in\Z^N\setminus\{0\},\,|\lambda|\geq n}|\hat{\varphi}(x,\lambda)|\\
 &\nn\quad\leq C\Big(\sum_{\lambda\in\Z^N\setminus\{0\},\,|\lambda|\geq n}|\hat{\varphi}(x,\lambda)|^2|\lambda|^2\Big)^{\tfrac12}\Big(\sum_{\lambda\in\Z^N\setminus\{0\},\,|\lambda|\geq n}\frac{1}{|\lambda|^2}\Big)^{\tfrac12}\\
&\nn\quad \leq C\Big(\sum_{\lambda\in\Z^N\setminus\{0\},\,|\lambda|\geq n}\frac{1}{|\lambda|^2}\Big)^{\tfrac12}.
 \end{align}
 By \eqref{eq:proj-point2} the series in \eqref{eq:direct-def} is uniformly convergent, and hence $\varphi_{\Pi}$ is continuous. The differentiability of $\varphi_{\Pi}$ follows from an analogous argument.
   \end{proof}
For every $v\in L^p(\Omega\times Q;\rd)$, let
\ba{eq:def-s-v}
\mathcal{S}_v:=&\Big\{\{u_{\ep}\}\subset L^p(\Omega;\rd): u_{\ep}\wkts v\quad\text{weakly two-scale in }L^p(\Omega\times Q;\R^d)\\
\nn&\quad\text{and }\pdeor u_{\ep}\to 0\quad\text{strongly in }W^{-1,p}(\Omega;\rl)
\Big\}.
\end{align}
Let also
\be{eq:def-s}
\mathcal{S}:=\Big\{\cup\mathcal{S}_v: v\in L^p(\Omega\times Q;\rd)\Big\}.\ee
  We provide a characterization of the set $\mathcal{S}$. \begin{lemma}
 \label{lemma:A-free-fields}
 Let $v\in L^p(\Omega\times Q;\R^d)$. Let $\pdeor$ be a first order differential operator with variable coefficients, satisfying \eqref{eq:constant-rank-condition}. The following conditions are equivalent:
 \begin{enumerate}
 \item[(C1)] $v\in \mathscr{F}$ (see \eqref{eq:def-A-f});\\
 \item[(C2)] $\mathcal{S}_v$ is nonempty.
 \end{enumerate}
 \end{lemma}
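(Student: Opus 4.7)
The plan is to treat the two implications separately. For (C2)$\Rightarrow$(C1), given $\{u_\ep\}\in\mathcal{S}_v$, weak two-scale convergence $u_\ep\wkts v$ implies the weak $L^p$ convergence $u_\ep\wk\bar v:=\iQ v(\cdot,y)\,dy$. Since $\pdeor\colon L^p\to W^{-1,p}$ is a bounded linear operator (its coefficients $A^i$ are smooth and bounded), it preserves weak convergence, so $\pdeor u_\ep\wk\pdeor\bar v$ weakly in $W^{-1,p}$; combined with $\pdeor u_\ep\to 0$ strongly in $W^{-1,p}$, this forces $\pdeor\bar v=0$. To extract $\pdeor_y v=0$ I would test $\pdeor u_\ep$ against the oscillating function $\ep\psi(x,x/\ep)$ for $\psi\in C^\infty_c(\Omega;C^\infty_{\rm per}(\Rn;\rl))$. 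After integration by parts the chain rule produces one $O(1)$ term of the shape $\iO u_\ep(x)\cdot\sum_i (A^i(x))^T\partial_{y_i}\psi(x,x/\ep)\,dx$ together with remainders of order $\ep$ that vanish because $\ep u_\ep\to 0$ in $L^p$. Since $\|\ep\psi(\cdot,\cdot/\ep)\|_{W^{1,p'}_0}=O(1)$ and $\|\pdeor u_\ep\|_{W^{-1,p}}\to 0$, the full pairing tends to zero, and weak two-scale convergence of $u_\ep$ against the $\partial_{y_i}\psi$ test piece yields
$$\iOQ v(x,y)\cdot\sum_i (A^i(x))^T\partial_{y_i}\psi(x,y)\,dy\,dx=0,$$
which is the distributional statement $\pdeor_y v=0$ in $W^{-1,p}(Q;\rl)$ for a.e.\ $x\in\Omega$.

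For the converse (C1)$\Rightarrow$(C2) I would first construct the sequence when $v$ is smooth and then bootstrap by density. If $v\in C^1(\overline\Omega;C^\infty_{\rm per}(\Rn;\rd))$ lies in $\mathscr{F}$, the natural candidate is $u_\ep(x):=v(x,x/\ep)$, which two-scale converges strongly to $v$ by Proposition~\ref{prop:simple-2-scale}. The chain rule together with $\pdeor_y v=0$ gives
$$\pdeor u_\ep(x)=(\pdeor_x v)(x,x/\ep)+\ep^{-1}(\pdeor_y v)(x,x/\ep)=(\pdeor_x v)(x,x/\ep).$$
Applying Proposition~\ref{prop:simple-2-scale} again to the continuous integrand $\pdeor_x v$, this sequence converges weakly in $L^p(\Omega;\rl)$ to $\iQ\pdeor_x v(x,y)\,dy=\pdeor\bar v=0$; the compact embedding $L^p(\Omega)\hookrightarrow W^{-1,p}(\Omega)$ on the bounded domain $\Omega$ upgrades this to strong $W^{-1,p}$ convergence to zero.

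To approximate a generic $v\in\mathscr{F}$ by smooth elements on which the previous step applies, decompose $v=\bar v+\tilde v$ with $\bar v(x):=\iQ v(x,y)\,dy$ and $\tilde v:=v-\bar v$. Then $\tilde v$ has zero $y$-mean and satisfies $\pdeor_y(x)\tilde v(x,\cdot)=0$, so property (P3) of Lemma~\ref{lemma:proj-operator} gives $\Pi(x)\tilde v(x,\cdot)=\tilde v(x,\cdot)$ a.e. Choose smooth approximations $\bar v_n\in C^\infty(\overline\Omega;\rd)$ of $\bar v$ in $L^p(\Omega;\rd)$ (by standard mollification) and $\tilde\phi_n\in C^\infty(\overline\Omega;C^\infty_{\rm per}(\Rn;\rd))$ of $\tilde v$ in $L^p(\Omega\times Q;\rd)$ with $\iQ\tilde\phi_n(x,y)\,dy=0$ (via Fourier truncation in $y$ and mollification in $x$), and set
$$v_n(x,y):=\bar v_n(x)+\Pi(x)\tilde\phi_n(x,\cdot)(y).$$
Property (P5) ensures $v_n\in C^1(\overline\Omega;C^\infty_{\rm per}(\Rn;\rd))$; (P2) gives $\pdeor_y v_n=0$; the Fourier representation of $\Pi$ shows $\iQ v_n(x,y)\,dy=\bar v_n(x)$; and the uniform boundedness of $\Pi(x)\colon L^p(Q;\rd)\to L^p(Q;\rd)$ (a consequence of (P3) and the uniform estimate $\|\pdeor_y(x)\psi\|_{W^{-1,p}}\leq C\|\psi\|_{L^p}$) gives $v_n\to v$ in $L^p(\Omega\times Q;\rd)$. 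Moreover, continuity of $\pdeor\colon L^p\to W^{-1,p}$ yields $\pdeor\bar v_n\to\pdeor\bar v=0$ in $W^{-1,p}$. For fixed $n$, setting $u^n_\ep(x):=v_n(x,x/\ep)$ and arguing as in the smooth case, Proposition~\ref{prop:simple-2-scale} gives $u^n_\ep\sts v_n$, while the same computation produces $\pdeor u^n_\ep=(\pdeor_x v_n)(\cdot,\cdot/\ep)\to\pdeor\bar v_n$ in $W^{-1,p}$ as $\ep\to 0$. A diagonal extraction $\ep_n\to 0$, made rigorous through the metrizability of weak two-scale convergence via weak $L^p$ convergence of the unfolding operators (Theorem~\ref{thm:equivalent-two-scale}), then delivers $\{u^n_{\ep_n}\}\in\mathcal{S}_v$. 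The main obstacle is producing an approximation that simultaneously enforces $\pdeor_y v_n=0$ exactly (which demands the projection $\Pi$, and which therefore cannot be applied to $\bar v$ without destroying its mean) and drives $\pdeor\bar v_n$ to zero in $W^{-1,p}$ (which requires a separate mollification of $\bar v$); these incompatible roles force the splitting $v=\bar v+\tilde v$ rather than a single projection of $v$.
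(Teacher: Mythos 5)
Your proof is correct and follows essentially the same route as the paper: for (C2)$\Rightarrow$(C1) test $\pdeor u_\ep$ against $\ep$-scaled oscillating test functions to extract $\pdeor_y v=0$, and for (C1)$\Rightarrow$(C2) split $v$ into its $y$-mean $\bar v$ and oscillating part, smooth, project with $\Pi(x)$, use Proposition \ref{prop:simple-2-scale}, and close with Attouch diagonalization via the unfolding operator. The one small inefficiency is that you also mollify $\bar v$; the paper exploits the fact that $\pdeor\bar v=0$ holds exactly (since $v\in\mathscr F$) and hence leaves $\bar v$ untouched, taking $u^j_\ep(x)=\iQ v(x,y)\,dy+w^j(x,x/\ep)$ so that $\pdeor u^j_\ep$ comes entirely from the oscillating piece.
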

 \begin{proof}
 We first show that (C2) implies (C1). Let $v\in L^p(\Omega\times Q;\rd)$ and let $\{u_{\ep}\}\subset \mathcal{S}_v$. Consider a test function $\psi\in W^{1,p}_0(\Omega;\rl)$. Then
 $$\scal{\pdeor u_{\ep}}{\psi}\to 0\quad\text{as }\ep\to 0.$$
 On the other hand,
 \be{eq:free-field-eq}
 \scal{\pdeor u_{\ep}}{\psi}:=-\sum_{i=1}^N\iO\Big(\frac{\partial A^i(x)}{\partial x_i}u_{\ep}(x)\cdot\psi(x)+A^i(x)u_{\ep}(x)\cdot\frac{\partial \psi(x)}{\partial x_i}\Big)\,dx
 \ee
 and by Proposition \ref{prop:2-scale-compactness}, up to the extraction of a (not relabeled) subsequence,
 \be{eq:free-field-wk}
 u_{\ep}\wk \iQ v(x,y)\,dy\quad\text{weakly in }L^p(\Omega;\rd).\ee
Passing to the limit in \eqref{eq:free-field-eq} yields
 $$\pdeor_x \iQ v(x,y)\,dy=0\quad\text{in }W^{-1,p}(\Omega;\rl).$$
 
 In order to deduce the second condition in the definition of $\mathscr{F}$, we consider a sequence of test functions $\big\{\ep\varphi\big(\frac{x}{\ep}\big)\psi(x)\big\}$, where $\varphi\in W^{1,p'}_0(Q;\rd)$ and $\psi\in C^{\infty}_c(\Omega)$. Since this sequence is uniformly bounded in $W^{1,p'}_0(\Omega;\rd)$, we have
 $$\scal{\pdeor u_{\ep}}{\ep\varphi\Big(\frac{\cdot}{\ep}\Big)\psi}\to 0$$
 as $\ep\to 0$, where
\bas
 &\scal{\pdeor u_{\ep}}{\ep\varphi\Big(\frac{\cdot}{\ep}\Big)\psi}\\
 &\quad=-\ep\sum_{i=1}^N\iO\Big(\frac{\partial A^i(x)}{\partial x_i}u_{\ep}(x)\cdot \varphi\Big(\frac{x}{\ep}\Big)\psi(x)+A^i(x)u_{\ep}(x)\cdot\varphi\Big(\frac{x}{\ep}\Big)\frac{\partial\psi(x)}{\partial x_i}\Big)\,dx\\
&\qquad - \sum_{i=1}^N\iO A^i(x)u_{\ep}(x)\cdot\frac{\partial\varphi}{\partial x_i}\Big(\frac{x}{\ep}\Big)\psi(x)\,dx.
 \end{align*}
 Passing to the subsequence of $\{u_{\ep}\}$ extracted in \eqref{eq:free-field-wk}, the first line of the previous expression converges to zero. By the definition of two-scale convergence, the second line converges to
 $$- \sum_{i=1}^N\iOQ A^i(x)v(x,y)\cdot\frac{\partial \varphi(y)}{\partial y_i}\psi(x)\,dy\,dx,$$
and thus
 $$\scal{\sum_{i=1}^N A^i(x)\frac{\partial v(x,\cdot)}{\partial y_i}}{\varphi}_{W^{-1,p}(Q;\rl),W^{1,p'}_0(Q;\rl)}=0$$
 for a.e. $x\in\Omega$, that is
 $$\pdeor_y v=0\quad\text{in }W^{-1,p}(Q;\rl)\quad\text{for a.e. }x\in\Omega.$$
 This completes the proof of (C1).\\
 
Assume now that (C1) holds true, i.e.,  $v\in \mathscr{F}$. In order to construct the sequence $\{u_{\ep}\}$, set
 $$v_1(x,y)=v(x,y)-\iQ v(x,z)\,dz.$$
 We first assume that $v_1\in C^1(\Omega;C^1_{\rm per}(\Rn;\rd))$. Defining
 $$u_{\ep}(x):=\iQ v(x,y)\,dy+v_1\Big(x,\frac{x}{\ep}\Big)\quad\text{for a.e. }x\in\Omega,$$ 
 by Proposition \ref{prop:simple-2-scale} we have
 $u_{\ep}\sts v$ strongly two-scale in $L^p(\Omega\times Q;\R^d)$. Moreover, by the definition of $\mathscr{F}$ and Propositions \ref{prop:2-scale-compactness} and \ref{prop:simple-2-scale},
\bas
 \sum_{i=1}^N A^i(x)\frac{\partial u_{\ep}(x)}{\partial x_i}&=\sum_{i=1}^N A^i(x)\frac{\partial}{\partial x_i}\Big(\iQ v(x,y)\,dy+v_1\Big(x,\frac{x}{\ep}\Big)\Big)\\
 &=\sum_{i=1}^N A^i(x)\frac{\partial v}{\partial x_i}\Big(x,\frac{x}{\ep}\Big)\wk \sum_{i=1}^N A^i(x)\frac{\partial}{\partial {x_i}}\iQ v(x,y)\,dy=0
 \end{align*}
 weakly in $L^p(\Omega;\rl)$.
 Hence $v$ satisfies $(C2)$. 
 
In the general case in which $v_1\in L^p(\Omega\times Q;\rd)$, we first need to approximate $v_1$ in order to keep the periodicity condition during the subsequent regularization. To this purpose, we extend $v_1$ to $0$ outside $\Omega\times Q$, we consider a sequence $\{\varphi_j\}\in C^{\infty}_c(Q)$ such that $0\leq \varphi_j\leq 1$ and $\varphi_j\to 1$ pointwise, and we define the maps
 $$v_1^j(x,y):=\varphi_j(y)v_1(x,y)\quad\text{for a.e. }x\in\Omega\quad\text{and }y\in Q.$$
Extend these maps to $\Omega\times \R^N$ by periodicity. It is straightforward to see that
 \be{eq:double-lp-conv}
 v_1^j\to v_1\quad\text{strongly in }L^p(\Omega;L^p_{\rm per}(\R^N;\rd))
 \ee
 by the dominated convergence theorem. Moreover
 \be{eq:double-right-conv}
 \|\pdeor_y v_1^j\|_{W^{-1,p}(Q;\rl)}\to 0\quad\text{strongly in }L^p(\Omega).
 \ee
 Indeed, by \eqref{eq:double-lp-conv}, and since $\pdeor_y v=0$,
 $$\|\pdeor_y(x) v_1^j(x,\cdot)\|_{W^{-1,p}(Q;\rl)}\to 0\quad\text{for a.e. }x\in\Omega,$$
and
 $$\|\pdeor_y(x) v_1^j(x,\cdot)\|^p_{W^{-1,p}(Q;\rl)}\leq C\|\varphi_j\|^p_{L^{\infty}(Q)}\|v_1(x,y)\|^p_{L^p(Q;\rd)}\leq C\|v_1(x,y)\|^p_{L^p(Q;\rd)}$$
 for a.e. $x\in\Omega$. Thus \eqref{eq:double-right-conv} follows by the dominated convergence theorem.\\
 
  Convolving first with respect to $y$ and then with respect to $x$ we construct a sequence $\{v_1^{\delta,j}\}\in C^{\infty}(\Omega; C^{\infty}_{\rm per}(\Rn;\rd))$ such that
 \be{eq:v-uno-delta-wk}
 v_1^{\delta,j}\to v_1^j\quad\text{strongly in }L^p(\Omega;L^p_{\rm per}(\Rn;\rd)),
 \ee
 and
 \be{eq:approx-double-right}
 \|\pdeor_y v_1^{\delta,j}\|_{W^{-1,p}(Q;\rl)}\to 0\quad\text{strongly in }L^{p}(\Omega),\ee
 as $\delta\to 0$.
In view of \eqref{eq:double-lp-conv}--\eqref{eq:approx-double-right}, a diagonal argument provides a subsequence $\{\delta(j)\}$ such that $\{v_1^{\delta(j),j}\}$ satisfies 
\be{eq:limit-set1}
v_1^{\delta(j),j}\to v_1\quad\text{strongly in }L^p(\Omega;\,L^p_{\rm per}(\R^N;\R^d))
\ee
and
\be{eq:limit-set2}
\|\pdeor_yv_1^{\delta(j),j}\|_{W^{-1,p}(Q;\R^l)}\to 0\quad\text{strongly in }L^p(\Omega).
\ee
Set
$$w^j(x,y):=\Pi(x)\Big(v_1^{\delta(j),j}(x,y)-\iQ v_1^{\delta(j),j}(x,y)\,dy\Big)$$
for a.e. $x\in\Omega$ and $y\in Q$. By Lemma \ref{lemma:proj-operator} we have $w^j\in C^{\infty}(\Omega;\,C^{\infty}_{\rm per}(\R^N;\R^d))$,
\be{eq:limit-set1p}
\pdeor_y w^j=0\quad\text{in }W^{-1,p}(Q;\R^l)\quad\text{for a.e. }x\in\Omega,
\ee
and
\bas
&\|w^j-v_1\|^p_{L^p(\Omega\times Q;\R^d)}\leq C\Big(\Big\|w^j-v_1^{\delta(j),j}+\iQ v_1^{\delta(j),j}(x,y)\,dy\Big\|^p_{L^p(\Omega\times Q;\R^d)}\\
&\nn\qquad+\|v_1^{\delta(j),j}-v_1-\iQ v_1^{\delta(j),j}(x,y)\,dy\|^p_{L^p(\Omega\times Q;\R^d)}\Big)\\
&\nn\quad\leq C\Big(\int_{\Omega}\|\pdeor_y(x) v_1^{\delta(j),j}(x,y)\|^p_{W^{-1,p}(Q;\R^l)}\,dx\\
&\nn\qquad+\|v_1^{\delta(j),j}-v_1-\iQ v_1^{\delta(j),j}(x,y)\,dy\|^p_{L^p(\Omega\times Q;\R^d)}\Big).
\end{align*}
Therefore, in view of \eqref{eq:limit-set1} and \eqref{eq:limit-set2},  
 \be{eq:limit-set2p}w^j\to v_1\quad\text{strongly in }L^p(\Omega\times Q;\R^d).\ee
 We set
 $$u_{\ep}^{j}(x):=\iQ v(x,y)\,dy+w^j\Big(x,\frac{x}{\ep}\Big)\quad\text{for a.e. }x\in\Omega.$$
 By Proposition \ref{prop:simple-2-scale} and \eqref{eq:limit-set2p},
 \be{eq:limit-set-point1} u_{\ep}^{j}\sts v\quad\text{strongly two-scale in }L^p(\Omega\times Q;\R^d)\ee
 as $\ep\to 0$ and $j\to +\infty$, in this order. Moreover, by \eqref{eq:limit-set1p} and since $v\in \mathscr{F}$,
 $$\pdeor u_{\ep}^{j}=\sum_{i=1}^N A^i(x)\frac{\partial w^j}{\partial x_i}\Big(x,\frac{x}{\ep}\Big).$$
 By \eqref{eq:limit-set2p}, Proposition \ref{prop:simple-2-scale}, and the compact embedding of $W^{-1,p}$ into $L^p$, we conclude that
 \be{eq:limit-set-point2} \pdeor u_{\ep}^{j}\to \pdeor \iQ v_1(x,y)\,dy=0\ee
 strongly in $W^{-1,p}(\Omega;\R^l)$, as $\ep\to 0$ and $j\to +\infty$, in this order. By \eqref{eq:limit-set-point1}, \eqref{eq:limit-set-point2}, and Theorem \ref{thm:equivalent-two-scale} it follows in particular that 
 $$\lim_{j\to +\infty}\lim_{\ep\to 0}\Big(\|T_{\ep}u_{\ep}^{j}-v\|_{L^p(\Omega\times Q;\R^d)}+\|\pdeor u_{\ep}^{j}\|_{W^{-1,p}(\Omega;\R^l)}\Big)=0.$$
Attouch's diagonalization lemma \cite[Lemma 1.15 and Corollary 1.16]{attouch} provides us with a subsequence $\{j(\ep)\}$ such that, setting $u_{\ep}:=u_{\ep}^{j(\ep)}$, there holds
 $$T_{\ep}u_{\ep}\sts v\quad\text{strongly two-scale in }L^p(\Omega\times Q;\R^d)$$
 and
 $$\pdeor u_{\ep}\to 0\quad\text{strongly in }W^{-1,p}(\Omega;\R^l).$$
 The thesis follows applying again Theorem \ref{thm:equivalent-two-scale}.
 \end{proof}
  In order to state the main result of this section we introduce the classes
 \be{eq:def-u-f}\mathscr{U}:=\{u\in L^p(\Omega;\rd):\,\pdeor u=0\}\ee
 and
 \be{eq:def-w-f}\mathscr{W}:=\Big\{w\in L^p(\Omega\times Q;\rd): \iQ w(x,y)\,dy=0\quad\text{and }\pdeor_y w=0\Big\}.\ee
 It is clear that $v\in \mathscr{F}$ if and only if 
 $$\iQ v(x,y)\,dy\in \mathscr{U}\quad\text{and}\quad v-\iQ v(x,y)\,dy\in \mathscr{W}.$$
 Let ${\mathscr{E}}_{\rm hom}:L^p(\Omega;\rd)\to L^p(\Omega;\rd)$ be the functional
\be{eq:def-E-hom}{\mathscr{E}}_{\rm hom}(u):=
\begin{cases}\liminf_{n\to +\infty}\inf_{w\in\mathscr{W}}\iOQ f(x,ny, u(x))\,dy\,dx&\text{if }u\in\mathscr{U},\\
+\infty&\text{otherwise in }L^p(\Omega;\rd).\end{cases}\ee
 We now provide a first characterization of \eqref{eq:liminf-to-charact}.
\begin{theorem}
\label{thm:main-result-A-free}
Under the assumptions of Theorem \ref{thm:main}, for every $u\in L^p(\Omega;\R^d)$ there holds
\begin{multline}
\label{eq:main-A-free}
\inf\Big\{\liminf_{\ep\to 0}\iO f\Big(x,\frac{x}{\ep},u_{\ep}(x)\Big)\,dx:u_{\ep}\wk u\quad\text{weakly in }L^p(\Omega;\rd)\\
\text{and }\pdeor u_{\ep}\to 0\quad\text{strongly in }W^{-1,p}(\Omega;\rl)\Big\}\\
=\inf\Big\{\limsup_{\ep\to 0}\iO f\Big(x,\frac{x}{\ep},u_{\ep}(x)\Big)\,dx:u_{\ep}\wk u\quad\text{weakly in }L^p(\Omega;\rd)\\
\text{and }\pdeor u_{\ep}\to 0\quad\text{strongly in }W^{-1,p}(\Omega;\rl)\Big\}={\mathscr{E}}_{\rm hom}(u).
\end{multline}
\end{theorem}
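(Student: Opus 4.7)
My plan is to prove the two matching bounds $I^{-}(u)\ge\mathscr{E}_{\rm hom}(u)\ge I^{+}(u)$, where $I^{-}(u)$ and $I^{+}(u)$ denote the left and middle infima in \eqref{eq:main-A-free}; the trivial $I^{-}(u)\le I^{+}(u)$ then closes the chain of equalities. When $u\notin\mathscr{U}$, testing the distributional identity for $\pdeor u_{\ep}$ against $\varphi\in W^{1,p'}_0(\Omega;\rl)$ and passing to the limit forces $\pdeor u=0$, so no admissible sequence exists and $I^{\pm}(u)=+\infty=\mathscr{E}_{\rm hom}(u)$. From here on I assume $u\in\mathscr{U}$.

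\emph{Lower bound.} Fix an admissible $\{u_{\ep}\}$ with finite $\liminf$. The growth \eqref{eq:growth-p-f-3} forces $L^p$-boundedness, so Proposition \ref{prop:2-scale-compactness} extracts along a subsequence a two-scale limit $v\in L^p(\Omega\times Q;\rd)$ satisfying $\iq v(x,y)\,dy=u(x)$. Since $\pdeor u_{\ep}\to0$ strongly in $W^{-1,p}$, Lemma \ref{lemma:A-free-fields} places $v\in\mathscr{F}$, so $v-u\in\mathscr{W}$. Passing through the unfolding operator (Theorem \ref{thm:equivalent-two-scale}) turns $u_{\ep}\wkts v$ into $T_{\ep}u_{\ep}\wk v$ weakly in $L^p(\R^N\times Q;\rd)$, and combined with the $p$-equiintegrability supplied by Proposition \ref{prop:conv-unf-op} and the continuity of $f(\cdot,y,\cdot)$, an Ioffe-type lower semicontinuity argument yields
\[
\liminf_{\ep\to 0}\iO f\bigl(x,\tfrac{x}{\ep},u_{\ep}\bigr)\,dx\;\ge\;\iOQ f(x,y,v(x,y))\,dy\,dx.
\]
To bridge to $\mathscr{E}_{\rm hom}(u)$ I use a rescaling argument: the change of variable $y\mapsto ny$ combined with the $Q$-periodicity of $f$ in the second slot produces, for each $n$, $\mathscr{W}$-competitors for the $n$-th cell infimum out of $v-u$; taking $\liminf_n$ then recovers $\mathscr{E}_{\rm hom}(u)$.

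\emph{Upper bound.} For each $n\in\N$ I pick $w_n\in\mathscr{W}$ within $1/n$ of the $n$-th cell infimum defining $\mathscr{E}_{\rm hom}(u)$. Smoothing $w_n$ by convolution in $x$ and $y$ (as in the proof of Lemma \ref{lemma:A-free-fields}) gives approximants $w_n^{\delta}\in C^1(\Omega;C^\infty_{\rm per}(\R^N;\rd))$ with mean zero in $y$ and $\|\pdeor_y w_n^{\delta}\|_{W^{-1,p}(Q;\rl)}\to 0$ in $L^p(\Omega)$ as $\delta\to 0$. Applying the pointwise projector $\Pi(x)$ from Lemma \ref{lemma:proj-operator}, the fields $\tilde w_n^{\delta}(x,y):=\Pi(x)w_n^{\delta}(x,y)$ are smooth by (P5), exactly $\pdeor_y(x)$-free by (P2), and $L^p$-close to $w_n^{\delta}$ by (P3); a diagonal choice $\delta=\delta(n)$ produces smooth fields $\tilde w_n\to w_n$ in $L^p(\Omega\times Q;\rd)$. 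The recovery candidate is
\[
u_{\ep}^n(x):=u(x)+\tilde w_n\Bigl(x,\tfrac{x}{n\ep}\Bigr),
\]
where the inner scale $n\ep$ is chosen so that at this joint scale the unfolded integrand $f(x,x/\ep,u_{\ep}^n(x))=f(x,n(x/(n\ep)),\cdot)$ converges via Proposition \ref{prop:simple-2-scale} to $\iOQ f(x,ny,u(x)+\tilde w_n(x,y))\,dy\,dx$, matching the $n$-th cell integral in the limit $\ep\to 0$. Computing $\pdeor u_{\ep}^n$ directly, the singular $(n\ep)^{-1}$-term $\pdeor_y(x)\tilde w_n(x,x/(n\ep))$ vanishes by construction, leaving the $L^p$-bounded term $\sum_i A^i(x)\partial_{x_i}\tilde w_n(x,x/(n\ep))$ which converges weakly to $\pdeor_x\iq\tilde w_n(x,y)\,dy=0$, hence strongly in $W^{-1,p}$ by the compact embedding. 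A final Attouch diagonalization \cite[Lemma 1.15]{attouch} in the indices $(n,\ep)$ delivers the recovery sequence.

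\emph{Main obstacle.} The principal difficulty is that $\Pi(x)$ projects only pointwise-in-$x$ onto $\ker\pdeor_y(x)$ and not onto $\ker\pdeor$: the strong (not merely weak) vanishing of $\pdeor u_{\ep}^n$ in $W^{-1,p}$ must be extracted through the smoothness property (P5), the exact orthogonality $\pdeor_x\iq\tilde w_n(x,y)\,dy=0$ (itself relying on $u\in\mathscr{U}$ together with $\mathscr{W}$-membership of $\tilde w_n$), and the compact embedding $L^p\hookrightarrow W^{-1,p}$. The matching of the cell index $n$ with the finer spatial scale $n\ep$ in $u_{\ep}^n$ is the second delicate ingredient, and it is precisely this coupling that necessitates the ordered diagonal extractions at both the smoothing step ($\delta=\delta(n)$) and the final recovery ($n=n(\ep)$).
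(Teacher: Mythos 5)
Your upper bound (recovery sequence) is essentially the paper's argument: fix $n$ and $w\in\mathscr{W}$, mollify in both variables, apply the pointwise projector $\Pi(x)$ to land exactly in $\ker\pdeor_y(x)$, define $u_{\ep}^n(x)=u(x)+\tilde w_n(x,x/(n\ep))$, check that the singular part of $\pdeor u_{\ep}^n$ vanishes by (P2) while the remaining term vanishes via the compact embedding $L^p\hookrightarrow W^{-1,p}$, and close with two Attouch diagonalizations. This is precisely Proposition \ref{thm:limsup-basic-A-free} and Corollary \ref{thm:limsup-A-free}.

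Your lower bound, however, has a genuine gap. The step
\[
\liminf_{\ep\to 0}\iO f\bigl(x,\tfrac{x}{\ep},u_{\ep}\bigr)\,dx\ \ge\ \iOQ f(x,y,v(x,y))\,dy\,dx
\]
for the weak two-scale limit $v$ is not available here: after unfolding, the statement would require weak $L^p$ lower semicontinuity of $w\mapsto\iint f(x,y,w(x,y))\,dy\,dx$, i.e.\ convexity of $f(x,y,\cdot)$, which is not among the hypotheses \eqref{eq:hp-f-1}--\eqref{eq:growth-p-f-3}. (Also, Proposition \ref{prop:conv-unf-op} asserts $\|u-T_{\ep}u\|_{L^p}\to 0$ for a fixed $u$; it does not furnish $p$-equiintegrability of $\{T_{\ep}u_{\ep}\}$ for an arbitrary bounded sequence.) The ``Ioffe-type'' bound you invoke is exactly the kind of inequality whose failure is the reason one has to pass to an envelope/homogenized density at all; if it held, the whole problem would reduce to a single unfolded integral. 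The paper avoids this by never producing a pointwise lower bound against the two-scale limit: it first truncates to a $p$-equiintegrable sequence (Proposition \ref{thm:liminf-A-free-1}, using \cite[Lemma 2.15]{fonseca.muller}), then rescales to replace $x/\ep_n$ by $\nu n x$, unfolds at the coarse scale $1/\nu$ to generate cell fields $\hat v_{\nu,z,n}$, projects them into $\mathscr{W}$ with $\Pi(x)$ using the quantitative estimate (P3) together with the key claim \eqref{eq:most-difficult-estimate}, and only then compares against $\inf_{w\in\mathscr{W}}\iint f(x,ny,u+w)$ to reach $\mathscr{E}_{\rm hom}(u)$ (Proposition \ref{thm:liminf-A-free-2}). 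To repair your argument you would need to replace the weak-lsc step by this unfolding--projection mechanism; the rescaling observation at the end of your lower-bound paragraph (using $Q$-periodicity to pass from the single-cell integral to the $n$-cell one) is correct but is deployed in the wrong direction once the lsc step is removed.
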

We subdivide the proof of Theorem \ref{thm:main-result-A-free} into the proof of a limsup inequality (Corollary \ref{thm:limsup-A-free}) and a liminf inequality (Propositions \ref{thm:liminf-A-free-1} and \ref{thm:liminf-A-free-2}).

 We first show how an adaptation of the construction in Lemma \ref{lemma:A-free-fields} yields an outline for proving the limsup inequality in \eqref{eq:main-A-free}.
 \begin{proposition}
 \label{thm:limsup-basic-A-free}
 Under the assumptions of Theorem \ref{thm:main}, for every $n\in N$, $u\in\mathscr{U}$ and $w\in\mathscr{W}$ there exists a sequence $\{u_{\ep}\}\in \mathcal{S}_{u+w}$ (see \eqref{eq:def-s-v}) such that
 \ba{eq:1-thm45} 
 &u_{\ep}\wk u\quad\text{weakly in }L^p(\Omega;\rd),\\
 &\label{eq:2-thm45}\limsup_{\ep\to 0}\iO f\Big(x,\frac{x}{\ep},u_{\ep}(x)\Big)\,dx\leq \iOQ f\big(x,ny,u(x)+w(x,y)\big)\,dy\,dx.
 \end{align}
 \end{proposition}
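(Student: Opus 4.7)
The construction mirrors the smooth-approximation scheme in the second half of the proof of Lemma \ref{lemma:A-free-fields}, applied to the $n$-rescaled field $w_n(x, y) := w(x, n y)$. I first check $w_n \in \mathscr{W}$: $\iQ w_n(x, y)\,dy = 0$ follows from $Q$-periodicity of $w(x, \cdot)$ via the change of variables $z = n y$, and $\pdeor_y w_n(x, \cdot) = n(\pdeor_y w)(x, n\cdot) = 0$ in $W^{-1,p}(Q; \rl)$ by the chain rule together with $\pdeor_y w = 0$. Hence $u + w_n \in \mathscr{F}$, because $\iQ(u + w_n)\,dy = u \in \mathscr{U}$ and $\pdeor u = 0$.

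Invoking Lemma \ref{lemma:A-free-fields} with $v := u + w_n$, and unfolding its proof, I would mollify $w_n$ and project via $\Pi(x)$ from Lemma \ref{lemma:proj-operator} to obtain a sequence $\{w_n^j\} \subset C^\infty(\Omega; C^\infty_{\rm per}(\Rn; \rd))$ with $\iQ w_n^j(x, y)\,dy = 0$, $\pdeor_y w_n^j = 0$, and $w_n^j \to w_n$ in $L^p(\Omega \times Q; \rd)$. Set $u_\ep^j(x) := u(x) + w_n^j(x, x/\ep)$. Proposition \ref{prop:simple-2-scale} yields $u_\ep^j \sts u + w_n^j$ strongly two-scale; a direct computation gives $\pdeor u_\ep^j = (\pdeor_x w_n^j)(x, x/\ep)$, which two-scale converges to $(\pdeor_x w_n^j)(x, y)$ and hence weakly in $L^p(\Omega; \rl)$ to $\pdeor_x \iQ w_n^j(x, y)\,dy = 0$; by the compact embedding $L^p(\Omega; \rl) \hookrightarrow W^{-1,p}(\Omega; \rl)$ we deduce $\pdeor u_\ep^j \to 0$ strongly in $W^{-1,p}$ along the iterated limit. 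Proposition \ref{prop:2-scale-compactness} provides $u_\ep^j \wk u$ weakly in $L^p(\Omega; \rd)$.

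For the energy, apply Proposition \ref{prop:simple-2-scale} to the $Q$-periodic-in-$y$ function $G^j(x, y) := f(x, y, u(x) + w_n^j(x, y))$, which is continuous in $y$ and measurable in $x$ by \eqref{eq:hp-f-1}--\eqref{eq:hp-f-3} together with the smoothness of $w_n^j$. For fixed $j$ this yields $\lim_{\ep \to 0}\iO f(x, x/\ep, u_\ep^j(x))\,dx = \iOQ f(x, y, u(x) + w_n^j(x, y))\,dy\,dx$. Passing $j \to +\infty$ via dominated convergence (justified by the $p$-growth \eqref{eq:growth-p-f-3} and $w_n^j \to w_n$ in $L^p$), the right-hand side tends to $\iOQ f(x, y, u(x) + w(x, n y))\,dy\,dx$. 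A change of variables on the $n^N$ sub-cubes tiling $Q$, exploiting the joint $Q$-periodicity of $f(x, \cdot, \xi)$ and $w(x, \cdot)$, recasts this as $\iOQ f(x, n y, u(x) + w(x, y))\,dy\,dx$, the stated bound. Attouch's diagonalization lemma \cite{attouch} finally extracts a single sequence $u_\ep := u_\ep^{j(\ep)}$ satisfying all required convergences simultaneously.

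The main obstacle is two-fold. First, a notational reconciliation: by construction $u_\ep \sts u + w_n$ rather than $u + w$, so membership in $\mathcal{S}_{u+w}$ as written must be read via the identification $w \leftrightarrow w_n$, both elements of $\mathscr{W}$. Second, the identity $\iOQ f(x, y, u + w(x, n y))\,dy\,dx = \iOQ f(x, n y, u + w(x, y))\,dy\,dx$ is not pointwise in $y$ and must be obtained from the sub-cube decomposition together with joint periodicity; should strict equality fail, the weaker bound $\iOQ f(x, y, u + w(x, n y))\,dy\,dx$ still suffices for Theorem \ref{thm:main}, because both expressions share the same $\liminf_n$ in the definition of $\mathscr{E}_{\rm hom}$.
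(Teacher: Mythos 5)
The mollification-and-projection scheme you borrow from Lemma \ref{lemma:A-free-fields} tracks the paper well, but the scaling you choose breaks the energy bound. You set $u_\ep(x)=u(x)+w_n^j(x,x/\ep)$ with $w_n(x,y)=w(x,ny)$, i.e.\ $w$ oscillates at scale $\ep/n$ (faster than the $x/\ep$ oscillation of $f$), and after the two limits you arrive at $\iOQ f(x,y,u(x)+w(x,ny))\,dy\,dx$. You then assert a change of variables turns this into $\iOQ f(x,ny,u(x)+w(x,y))\,dy\,dx$. That identity is false: although both integrands are $Q$-periodic in $y$, they are genuinely different periodic functions and their period averages differ. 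A one-line counterexample in $N=1$, $n=2$: take $u=0$, $w(x,y)=\sin 2\pi y$, $f(x,y,\xi)=(1+\cos 2\pi y)|\xi|^2$; then $\iQ(1+\cos 2\pi y)\sin^2 4\pi y\,dy=\tfrac12$ whereas $\iQ(1+\cos 4\pi y)\sin^2 2\pi y\,dy=\tfrac14$. Your fallback remark — that the two bounds ``share the same $\liminf_n$'' — is also unjustified: the map $w\mapsto w(\cdot,n\cdot)$ sends $\mathscr{W}$ only onto the strict subset of $(Q/n)$-periodic elements of $\mathscr{W}$, so $\inf_{w\in\mathscr{W}}\iOQ f(x,y,u+w(x,ny))\,dy\,dx$ and $\inf_{w\in\mathscr{W}}\iOQ f(x,ny,u+w(x,y))\,dy\,dx$ need not have the same $\liminf_n$.

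The paper avoids all of this by choosing the \emph{opposite} scaling: $u_\ep(x):=u(x)+w(x,x/(n\ep))$, so $w$ oscillates at scale $n\ep$ (slower than $f$). Then the auxiliary function $g(x,y):=f(x,ny,u(x)+w(x,y))$ satisfies $g(x,x/(n\ep))=f(x,x/\ep,u_\ep(x))$ identically, $g(x,\cdot)$ is $Q$-periodic, and Proposition \ref{prop:simple-2-scale} (applied along the vanishing scale $n\ep$) yields the stated bound $\iOQ f(x,ny,u(x)+w(x,y))\,dy\,dx$ directly, with no change of variables needed. If you keep your mollified fields $w^j$ (approximating $w$ itself, not $w_n$) but evaluate them at $x/(n\ep)$ instead of $x/\ep$, the rest of your argument — weak $L^p$ convergence from $\iQ w^j(x,y)\,dy=0$, $\pdeor u_\ep^j=(\pdeor_x w^j)(x,x/(n\ep))\to 0$ in $W^{-1,p}$, dominated convergence in $j$, Attouch diagonalization — goes through as written and recovers the paper's proof.
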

 \begin{proof}
 
 \emph{Step 1}: We first assume that $u\in C(\Omega;\R^d)$ and $w\in C^1({\Omega}; C^1_{\rm per}(\R^N;\rd))$. \\
 Arguing as in \cite[Proof of Proposition 2.7]{fonseca.kromer} we introduce the auxiliary function
 $$g(x,y):=f(x,ny,u(x)+w(x,y))$$
 for every $x\in {\Omega}$ and for a.e. $y\in\R^N.$
 By definition, $g\in C(\Omega;L^p_{\rm per}(\R^N))$. Hence, setting
 $$g_{\ep}(x):=g\Big(x,\frac{x}{n\ep}\Big)\quad\text{for a.e. }x\in\Omega,$$
 Proposition \ref{prop:simple-2-scale} yields
 \bas
 &\lim_{\ep \to 0}\iO f\Big(x,\frac{x}{\ep}, u(x)+w\Big(x,\frac{x}{n\ep}\Big)\Big)\,dx=\lim_{\ep \to 0}\iO g_{\ep}(x)\,dx\\
 &\quad= \iOQ g(x,y)\,dy\,dx=\iOQ f(x,ny,u(x)+w(x,y))\,dy\,dx.
 \end{align*}

 Define
 $$u_{\ep}(x):=u(x)+w\Big(x,\frac{x}{n\ep}\Big)\quad\text{for a.e. }x\in{\Omega}.$$
 By the periodicity of $w$ in the second variable and by the definition of $\mathscr{W}$,
 $$u_{\ep}\wk u+\iQ w(x,y)\,dy=u\quad\text{weakly in }L^p(\Omega;\rd).$$
 By Proposition \ref{prop:simple-2-scale}, $u_{\ep}\sts u+w$ strongly two-scale in $L^p(\Omega\times Q;\R^d)$. Finally (recalling the definitions of the classes $\mathscr{U}$ and $\mathscr{W}$) by the regularity of $w$ and by Proposition \ref{prop:simple-2-scale},
$$ \pdeor u_{\ep}=\sum_{i=1}^N A^i(x)\frac{\partial w}{\partial x_i}\Big(x,\frac{x}{n\ep}\Big)\wk \sum_{i=1}^N A^i(x)\frac{\partial}{\partial_{x_i}}\iQ w(x,y)\,dy=0$$
 weakly in $L^p(\Omega;\rl)$ and hence strongly in $W^{-1,p}(\Omega;\rl)$, due to the compact embedding of $L^p$ into $W^{-1,p}$.\\
 \emph{Step 2}: Consider the general case in which $u\in \mathscr{U}$ and $w\in \mathscr{W}$. Arguing as in the second part of the proof of Lemma \ref{lemma:A-free-fields} (up to \eqref{eq:limit-set2p}), we construct a sequence $\{w^{j}\}\in C^1(\Omega; C^1_{\rm per}(\Rn;\rd))$ such that
 \be{eq:prop-strong}
 w^{j}\to u+w\quad\text{strongly in }L^p(\Omega;L^p_{\rm per}(\Rn;\rd)),
 \ee
 and
 $$\pdeor_y w^{j}= 0\quad\text{in }W^{-1,p}(\Omega;\R^l)\quad\text{for a.e. }x\in\Omega,\quad\text{for every }j.$$
 Set
 $$u_{\ep}^{j}(x):=w^{j}\Big(x,\frac{x}{n\ep}\Big)\quad\text{for a.e. }x\in\Omega.$$
 By Proposition \ref{prop:simple-2-scale}, there holds 
 \be{eq:limsup-A}u_{\ep}^{j}\sts u+w\quad\text{strongly two-scale in }L^p(\Omega\times Q;\R^d),\ee
  as $\ep\to 0$ and $j\to +\infty$, in this order. In addition, arguing as in the proof of \eqref{eq:limit-set-point2}, we have
 \be{eq:limsup-B}\pdeor u_{\ep}^{j}\to 0\quad\text{strongly in }W^{-1,p}(\Omega;\R^l)\ee
 as $\ep\to 0$ and $j\to +\infty$, in this order. 
 
 To conclude, it remains to study the asymptotic behavior of the energies associated to the sequence $\{u_{\ep}^j\}$. Consider the functions
 $$g^{j}(x,y):=f(x,ny,w^{j}(x,y))$$
 and $$g^{j}_{\ep}(x):=g^{j}\Big(x,\frac{x}{n\ep}\Big).$$
 Arguing as in Step 1, we obtain
 \ba{eq:limsup-C}
 &\lim_{j\to +\infty}\lim_{\ep\to 0}\iO f\Big(x,\frac{x}{\ep}, u_{\ep}^{j}(x)\Big)\,dx= \lim_{j\to +\infty}\lim_{\ep\to 0}\iO g_{\ep}^{j}(x)\,dx\\
 &\nn\quad=\lim_{j\to +\infty}\iOQ g^{j}(x,y)\,dy\,dx= \iOQ f(x,ny, u(x)+w(x,y))\,dy\,dx
  \end{align}
 where we used the periodicity of $g_{\ep}^{j}$, together with \eqref{eq:growth-p-f-3} and \eqref{eq:prop-strong}. In view of \eqref{eq:limsup-A}--\eqref{eq:limsup-C}, Attouch's diagonalization lemma \cite[Lemma 1.15 and Corollary 1.16]{attouch}, and Theorem \ref{thm:equivalent-two-scale}, we obtain a subsequence $\{j(\ep)\}$ such that $u_{\ep}:=u_{\ep}^{j(\ep)}$ satisfies both \eqref{eq:1-thm45} and \eqref{eq:2-thm45}. \end{proof}
 Proposition \ref{thm:limsup-basic-A-free} yields the following limsup inequality.
 \begin{corollary}
 \label{thm:limsup-A-free}
 Under the assumptions of Theorem \ref{thm:main}, for every $u\in L^p(\Omega;\R^d)$
\bas
&\inf\Big\{\limsup_{\ep\to 0}\iO f\Big(x,\frac{x}{\ep},u_{\ep}(x)\Big)\,dx:u_{\ep}\wk u\quad\text{weakly in }L^p(\Omega;\rd)\\
&\quad\text{and }\pdeor u_{\ep}\to 0\quad\text{strongly in }W^{-1,p}(\Omega;\rl)\Big\}\leq{\mathscr{E}}_{\rm hom}(u).
\end{align*}
\end{corollary}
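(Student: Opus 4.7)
The plan is to derive the inequality directly from Proposition \ref{thm:limsup-basic-A-free} by exploiting the fact that the infimum on the left-hand side is independent of both the oscillation index $n$ and the corrector $w$. First I would handle the trivial case $u \notin \mathscr{U}$, where $\mathscr{E}_{\rm hom}(u) = +\infty$ and the inequality holds vacuously; from here on I assume $u \in \mathscr{U}$.

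For every $n \in \N$ and every $w \in \mathscr{W}$, Proposition \ref{thm:limsup-basic-A-free} produces a sequence $\{u_{\ep}^{n,w}\} \subset L^p(\Omega;\R^d)$ satisfying $u_{\ep}^{n,w} \wk u$ weakly in $L^p(\Omega;\R^d)$, $\pdeor u_{\ep}^{n,w} \to 0$ strongly in $W^{-1,p}(\Omega;\R^l)$, and
$$\limsup_{\ep \to 0}\iO f\Big(x,\tfrac{x}{\ep},u_{\ep}^{n,w}(x)\Big)\,dx \leq \iOQ f(x,ny,u(x)+w(x,y))\,dy\,dx.$$
Each such $\{u_{\ep}^{n,w}\}$ is therefore admissible for the infimum on the left-hand side of the claimed inequality, so this infimum is bounded above by the right-hand side of the display for every admissible pair $(n,w)$.

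Finally, since the left-hand side does not depend on $n$ or $w$, I would pass to the infimum over $w \in \mathscr{W}$ and subsequently to $\liminf_{n \to +\infty}$ on the right; by the definition of $\mathscr{E}_{\rm hom}(u)$ this yields exactly the desired bound. No genuine obstacle arises at the level of the corollary: all the construction work has been absorbed into Proposition \ref{thm:limsup-basic-A-free}, which itself relies on the two-scale machinery of Section \ref{section:prel} and on the projection operator $\Pi$ built in Lemma \ref{lemma:proj-operator}.
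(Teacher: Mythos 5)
Your proposal is correct and matches the paper's (implicit) argument: the paper presents the corollary as an immediate consequence of Proposition \ref{thm:limsup-basic-A-free}, and your deduction — treating the case $u\notin\mathscr{U}$ as vacuous, then using that the left-hand side is independent of $(n,w)$ to pass to $\inf_{w\in\mathscr{W}}$ and $\liminf_{n\to+\infty}$ — is exactly the intended reasoning.
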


 We now turn to the proof of the liminf inequality in Theorem \ref{thm:main-result-A-free}. For simplicity, we subdivide it into two intermediate results.
 \begin{proposition}
 \label{thm:liminf-A-free-1}
   Under the assumptions of Theorem \ref{thm:main}, for every sequence $\ep_n\to 0^+$, $u\in \mathscr{U}$ and $\{u_{n}\}\in L^p(\Omega;\rd)$ with $u_n\wk 0$ weakly in $L^p(\Omega;\rd)$ and $\pdeor u_n\to 0$, there exists a $p$-equiintegrable family of functions
 $$\mathscr{V}:=\{v_{\nu,n}:\nu,n\in\N\}$$
 such that $\mathscr{V}$ is a bounded subset of $L^p(\Omega;\R^d)$, for every $\nu\in\N$ and as $n\to +\infty$  
 \bas
 &v_{\nu,n}\wk 0\quad\text{weakly in }L^p(\Omega;\rd),\\
 &\pdeor v_{\nu,n}\to 0\quad\text{strongly in }W^{-1,q}(\Omega;\rl)\quad\text{for every }1<q<p.
 \end{align*}
 Furthermore,
 \bas
&\liminfn \iO f\Big(x,\frac{x}{\en}, u(x)+u_n(x)\Big)\,dx\geq \sup_{\nu\in\N}\Big\{\liminfn \iO f(x,\nu n x,u(x)+v_{\nu,n}(x))\,dx\Big\}.
 \end{align*}
 \end{proposition}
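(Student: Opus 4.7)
The plan is to extract a weak two-scale limit $w\in\mathscr{W}$ of $\{u_n\}$, regularize it into smooth maps $w_\nu$ in the constrained class $\mathscr{W}$ via the projection operator of Lemma~\ref{lemma:proj-operator}, and set $v_{\nu,n}(x):=w_\nu(x,\nu n x)$. Comparing the energies of $\{u_n\}$ and $\{v_{\nu,n}\}$ at the liminf level will be the main obstacle.

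By Proposition~\ref{prop:2-scale-compactness}, up to a (not relabeled) subsequence, $u_n\wkts w$ weakly two-scale for some $w\in L^p(\Omega\times Q;\rd)$, and $u_n\wk 0$ forces $\iQ w(x,y)\,dy=0$ a.e.\ in $\Omega$. The argument of the implication (C2)$\Rightarrow$(C1) of Lemma~\ref{lemma:A-free-fields}, applied to $\{u_n\}$, gives $\pdeor_y w=0$ in $W^{-1,p}(Q;\rl)$ a.e.\ in $\Omega$, so $w\in\mathscr{W}$. Mimicking the regularization procedure in the second part of the proof of Lemma~\ref{lemma:A-free-fields}---multiplication of $w$ by cutoffs $\varphi_j\in C^\infty_c(Q)$, convolution in both variables, application of the projection operator $\Pi(x)$, and Attouch's diagonalization---I produce a sequence $\{w_\nu\}\subset C^1(\overline{\Omega};C^1_{\rm per}(\Rn;\rd))$ with $\iQ w_\nu(x,y)\,dy=0$, $\pdeor_y w_\nu=0$ for a.e.\ $x\in\Omega$, and $w_\nu\to w$ strongly in $L^p(\Omega;L^p_{\rm per}(\Rn;\rd))$.

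Define $v_{\nu,n}(x):=w_\nu(x,\nu n x)$. Proposition~\ref{prop:simple-2-scale} applied with parameter $1/(\nu n)$ yields both $p$-equiintegrability and uniform $L^p$-boundedness of the family $\{v_{\nu,n}\}$, and the zero $y$-mean of $w_\nu$ forces $v_{\nu,n}\wk 0$ weakly in $L^p(\Omega;\rd)$ as $n\to+\infty$ for each fixed $\nu$. Since $\pdeor_y w_\nu = 0$ pointwise (by smoothness of $w_\nu$), the chain rule gives
\begin{align*}
\pdeor v_{\nu,n}(x)=\sum_{i=1}^N A^i(x)\frac{\partial w_\nu}{\partial x_i}(x,\nu n x),
\end{align*}
which is uniformly bounded in $L^p(\Omega;\rl)$ and, again by Proposition~\ref{prop:simple-2-scale}, converges weakly in $L^p$ to $\sum_i A^i(x)\partial_{x_i}\iQ w_\nu(x,y)\,dy=0$. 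The compact embedding $L^p(\Omega;\rl)\hookrightarrow W^{-1,q}(\Omega;\rl)$ for $1<q<p$ (which follows by duality from Rellich--Kondrachov) upgrades this weak convergence to strong convergence in $W^{-1,q}(\Omega;\rl)$.

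For the energy inequality, the strategy is to unfold. By Theorem~\ref{thm:equivalent-two-scale}, $T_{\ep_n}(u+u_n)\wk u+w$ weakly in $L^p(\Rn\times Q;\rd)$, while $T_{1/(\nu n)}(u+v_{\nu,n})\to u+w_\nu$ strongly in $L^p(\Rn\times Q;\rd)$. Using the isometry of Proposition~\ref{prop:isometry} together with Proposition~\ref{prop:conv-unf-op} and the growth hypothesis \eqref{eq:growth-p-f-3}, both integrals can be rewritten on $\Omega\times Q$ up to asymptotically negligible errors. For the right-hand side, the strong two-scale convergence combined with the Carath\'eodory property \eqref{eq:hp-f-1}--\eqref{eq:hp-f-2} and dominated convergence produces the identity $\liminfn\iO f(x,\nu n x,u+v_{\nu,n})\,dx=\iOQ f(x,y,u(x)+w_\nu(x,y))\,dy\,dx$. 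For the left-hand side one invokes the Decomposition Lemma of Fonseca--M\"uller--Pedregal to replace $\{u_n\}$ by an equiintegrable sequence sharing the same two-scale limit $w$, so that the unfolded integrand behaves like a Nemitsky operator along the new sequence and the lower bound $\iOQ f(x,y,u(x)+w(x,y))\,dy\,dx$ becomes accessible; the supremum over $\nu$ then records the loss coming from $w_\nu\to w$ only in $L^p$. Controlling this last passage without convexity or joint continuity of $f$ is the principal obstacle and forces heavy reliance on the equiintegrability/unfolding machinery.
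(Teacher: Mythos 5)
Your construction differs from the paper's in a way that is fatal to the energy inequality, which is the heart of the statement.

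In the paper, the family $\mathscr{V}$ is built \emph{directly from the sequence $\{u_n\}$}: after truncating to get a $p$-equiintegrable sequence $\{\tilde u_n\}$ and extending it to $\{\bar u_n\}$ on $Q$, one sets $v_{\nu,k_{\nu,n}}(x):=\bar u_n(\theta_{\nu,n}x)$ with $\theta_{\nu,n}:=\nu\ep_n\lfloor 1/(\nu\ep_n)\rfloor\to 1$. Since $\nu k_{\nu,n}x=(\theta_{\nu,n}/\ep_n)x$, the integrand $f(x,\nu k_{\nu,n}x,u(x)+v_{\nu,k_{\nu,n}}(x))$ is obtained from $f(x,x/\ep_n,u(x)+u_n(x))$ by a near-identity change of variables, and the claimed inequality is essentially an asymptotic \emph{equality}, controlled by the rescaling error $\theta_{\nu,n}\to 1$, the continuity of $f$ in $x$, and the truncation error. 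No limit passage in the last variable of $f$ is involved.

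You instead extract the weak two-scale limit $w$ of $\{u_n\}$, regularize it to $w_\nu\in\mathscr{W}$, and set $v_{\nu,n}(x):=w_\nu(x,\nu n x)$. The structural properties of $\mathscr{V}$ then come out cleanly (your verification via Proposition~\ref{prop:simple-2-scale} and the chain rule is fine), but your $v_{\nu,n}$ depends on $\{u_n\}$ only through its two-scale limit $w$. Consequently, by Proposition~\ref{prop:simple-2-scale}, for your family the right-hand side of the desired inequality is $\iOQ f(x,y,u(x)+w_\nu(x,y))\,dy\,dx$ for each $\nu$, so the inequality you must prove becomes, after letting $\nu\to\infty$,
\begin{equation*}
\liminfn\iO f\Big(x,\frac{x}{\ep_n},u(x)+u_n(x)\Big)\,dx\geq\iOQ f(x,y,u(x)+w(x,y))\,dy\,dx,
\end{equation*}
i.e.\ a lower semicontinuity statement for the unfolded Nemytskii functional along $T_{\ep_n}u_n\rightharpoonup w$ (weak $L^p$). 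This fails for general Carath\'eodory $f$ satisfying only \eqref{eq:hp-f-1}--\eqref{eq:growth-p-f-3}: weak lower semicontinuity of $v\mapsto\iOQ f(x,y,u+v)$ needs convexity (or $\pdeor$-quasiconvexity) of $f(x,y,\cdot)$, which is not assumed and is in fact precisely what the homogenization is designed to produce. The Decomposition Lemma you invoke addresses concentration (equiintegrability) but does nothing for oscillation; it cannot substitute for the missing convexity. You flag this as ``the principal obstacle,'' but within your scheme it is not an obstacle that can be circumvented — the step is simply false. The fix is the paper's: keep $v_{\nu,n}$ as a rescaled copy of $u_n$ itself so the two energies can be compared pathwise before any limit is taken; the relaxation to $\mathscr{E}_{\rm hom}$ is then performed later, in Proposition~\ref{thm:liminf-A-free-2}, where the projection operator $\Pi(x)$ is used on the unfolded sequence, not on its limit.
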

 \begin{proof}
 The proof follows the argument of \cite[Proof of Proposition 3.8]{fonseca.kromer}. We sketch the main steps for the convenience of the reader.\\
 \emph{Step 1}:\\
 We first truncate our sequence in order to achieve $p-$equiintegrability. Arguing as in \cite[Proof of Lemma 2.15]{fonseca.muller}, we construct a $p$-equiintegrable sequence $\{\tilde{u}_n\}\in L^p(\Omega;\rd)$ such that 
 \bas
 &\tilde{u}_n-u_n\to 0\quad\text{strongly in }L^q(\Omega;\rd)\quad\text{for every }1<q<p,\\
 &\tilde{u}_n\wk 0\quad\text{weakly in }L^p(\Omega;\rd),\\
 &\pdeor u_n\to 0\quad\text{strongly in }W^{-1,q}(\Omega;\rl)\quad\text{for every }1<q<p,
 \end{align*}
 and
 $$\liminfn \iO f\Big(x,\frac{x}{\ep_n}, u(x)+u_n(x)\Big)\,dx\geq \liminfn \iO f\Big(x,\frac{x}{\ep_n}, u(x)+\tilde{u}_n(x)\Big)\,dx.$$
 \emph{Step 2}: we consider the sequence 
 $$\tilde{k}_{\nu,n}:=\frac{1}{\nu \ep_n}.$$
  If $\{\tilde{k}_{\nu,n}\}$ is a sequence of integers (without loss of generality we can assume that it is increasing as $n$ increases), then there is nothing to prove and we simply set 
 $$v_{\nu,k}:=\begin{cases} \tilde{u}_n&\text{if }k=\tilde{k}_{\nu,n},\\
 0&\text{otherwise}.\end{cases}$$
 
 In the case in which $\{\tilde{k}_{\nu,n}\}$ is not a sequence of integers, we define
 $$k_{\nu,n}:=\frac{\theta_{\nu,n}}{\nu\ep_n},$$
 where
 $$\theta_{\nu,n}:=\nu \ep_n\floor[\Big]{\frac{1}{\nu \ep_n}}.$$
 In particular
 \be{eq:theta-nu-n}
 \theta_{\nu,n}\to 1\quad\text{as }n\to +\infty.
 \ee
 An adaptation of \cite[Lemma 2.8]{fonseca.kromer} applied to $\{\tilde{u}_n\}$ yields a $p-$equiintegrable sequence $\{\bar{u}_n\}\in L^p(Q;\rd)$ such that
 \begin{eqnarray}
 \nonumber&& \tilde{u}_n-\bar{u}_n\to 0\quad\text{strongly in }L^p(\Omega;\rd),\\
 \nonumber&&\tilde{u}_n\wk 0\quad\text{weakly in }L^p(Q\setminus \Omega;\rd),\\
 \label{eq:pdeor-bar-un}&&\pdeor \bar{u}_n\to 0\quad\text{strongly in }W^{-1,q}(Q;\rl)\quad\text{for every }1<q<p.
 \end{eqnarray}
 
 Arguing as in \cite[Proof of Proposition 3.8]{fonseca.kromer} we obtain
 $$\liminfn \iO f\Big(x,\frac{x}{\ep_n}, u+u_n(x)\Big)\,dx\geq \liminfn \iO f(x, \nu k_{\nu,n}x,u(x)+v_{\nu, k_{\nu,n}}(x))\,dx,$$
 where
 $$v_{\nu, k_{\nu,n}}(x):=\bar{u}_n(\theta_{\nu,n}x)\quad\text{for a.e. }x\in\Omega,$$
 $\nu\in \N$ and $n\in\N$ are large enough so that $\theta_{\nu,n}\Omega\subset Q$. Setting
 $$v_{\nu,n}:=\begin{cases}v_{\nu,k_{\nu,n}}&\text{if }n=k_{\nu,n},\\0&\text{otherwise},\end{cases}$$
  the sequence $\{v_{\nu, n}\}$ is uniformly bounded in $L^p(\Omega;\R^d)$, $p$-equiintegrable, and satisfies
 $$v_{\nu, n}\wk 0\quad\text{weakly in }L^p(\Omega;\rd)$$
 as $n\to +\infty$. To conclude, it remains only to show that
  \be{eq:thesis-pde-liminf}\pdeor v_{\nu,n}\to 0\quad\text{strongly in }W^{-1,q}(\Omega;\rl)\quad\text{for every }1<q<p
  \ee
  as $n\to +\infty$.
  
  Let $q$ as above be fixed, and let $\varphi\in W^{1,q'}_0(\Omega;\rl)$. A change of variables yields
  \bas
  &|\scal{\pdeor v_{\nu,k_{\nu,n}}}{\varphi}|\\
  &\quad=\Big|\iO \Big(\sum_{i=1}^N A^i(x)\bar{u}_n(\theta_{\nu,n}x)\cdot\frac{\partial \varphi(x)}{\partial x_i}+\sum_{i=1}^N\frac{\partial A^i(x)}{\partial x_i}\bar{u}_n(\theta_{\nu,n}x)\cdot\varphi(x)\Big)\,dx\Big|\\
 &\quad=\frac{1}{\theta_{\nu,N}^{N}}\Big|\int_{\theta_{\nu,N}\Omega} \Big(\sum_{i=1}^N A^i\Big(\frac{y}{\theta_{\nu,n}}\Big)\bar{u}_n(y)\cdot\frac{\partial\varphi}{\partial x_i}\Big(\frac{y}{\theta_{\nu,n}}\Big)\\
 &\qquad+\sum_{i=1}^N\frac{\partial A^i}{\partial x_i}\Big(\frac{y}{\theta_{\nu,n}}\Big)\bar{u}_n(y)\cdot\varphi\Big(\frac{y}{\theta_{\nu,n}}\Big)\Big)\,dy\Big|.
\end{align*}
For $n$ big enough $\theta_{\nu,N}\Omega\subset Q$. Hence, by \eqref{eq:theta-nu-n}, adding and subtracting the quantity
 $$\frac{1}{\theta_{\nu,N}^{N}}\int_{\theta_{\nu,N}\Omega} \Big(\sum_{i=1}^N A^i(y)\bar{u}_n(y)\cdot\frac{\partial\varphi}{\partial x_i}\Big(\frac{y}{\theta_{\nu,n}}\Big)+\sum_{i=1}^N\frac{\partial A^i}{\partial x_i}(y)\bar{u}_n(y)\cdot\varphi\Big(\frac{y}{\theta_{\nu,n}}\Big)\Big)\,dy,$$
we deduce the upper bound
 \bas
 & |\scal{\pdeor v_{\nu,n}}{\varphi}|\\
 &\quad \leq C\sum_{i=1}^N\Big\|A^i(y)-A^i\Big(\frac{y}{\theta_{\nu,N}}\Big)\Big\|_{C^0(\bar{Q};\M^{l\times d})}\|\bar{u}_n\|_{L^q(Q;\rd)}\|\varphi\|_{W^{1,q'}_0(\Omega;\rl)}\\
 &\qquad +C\sum_{i=1}^N\Big\|\frac{\partial A^i}{\partial y_i}(y)-\frac{\partial A^i}{\partial x_i}\Big(\frac{y}{\theta_{\nu,N}}\Big)\Big\|_{C^0(\bar{Q};\M^{l\times d})}\|\bar{u}_n\|_{L^q(Q;\rd)}\|\varphi\|_{W^{1,q'}_0(\Omega;\rl)}\\
 &\qquad +C\|\pdeor \bar{u}_n\|_{W^{-1,q}(\Omega;\rl)}\|\varphi\|_{W^{1,q'}_0(\Omega;\rl)}.
 \end{align*}
 Property \eqref{eq:thesis-pde-liminf} follows now by \eqref{eq:theta-nu-n} and \eqref{eq:pdeor-bar-un}.
 \end{proof}
 To complete the proof of the liminf inequality in \eqref{eq:main-A-free} we apply the \emph{unfolding operator} (see Subsection \ref{subsection:unfolding}) to the set $\mathscr{V}$ constructed in Proposition \ref{thm:liminf-A-free-1}. 
 \begin{proposition}
 \label{thm:liminf-A-free-2}
 Under the assumptions of Theorem \ref{thm:main}, for every $u\in\mathscr{U}$ and every family $\mathscr{V}=\{v_{\nu,n}:\,\nu, n\in \N\}$ as in Proposition \ref{thm:liminf-A-free-1}  there holds
 $$\liminf_{\nu\to +\infty}\liminfn \iO f(x,\nu n x, u(x)+v_{\nu,n}(x))\,dx\geq {\mathscr{E}}_{\rm hom}(u).$$
 \end{proposition}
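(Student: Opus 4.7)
The plan is to apply the unfolding operator $T_{\ep_\nu^n}$ at scale $\ep_\nu^n:=1/(\nu n)$, extract a weak two-scale limit $\bar w_\nu\in\mathscr{W}$ of $\{v_{\nu,n}\}_n$, and then use $\bar w_\nu$ as an admissible competitor in $\mathscr{E}_{\rm hom}(u)$ by a rescaling argument. Using $Q$-periodicity of $f$ in the second argument together with Proposition~\ref{prop:isometry}, I first rewrite
$$\iO f\bigl(x,\nu n x,u(x)+v_{\nu,n}(x)\bigr)\,dx=\int_{\R^N\times Q}f\bigl(a_{\nu,n}(x,y),y,T_{\ep_\nu^n}u(x,y)+T_{\ep_\nu^n}v_{\nu,n}(x,y)\bigr)\,dx\,dy,$$
with $a_{\nu,n}(x,y):=\ep_\nu^n\lfloor x/\ep_\nu^n\rfloor+\ep_\nu^n(y-\lfloor y\rfloor)$; for each fixed $\nu$, $a_{\nu,n}\to x$ uniformly on bounded sets as $n\to+\infty$, and $T_{\ep_\nu^n}u\to u$ strongly in $L^p(\R^N\times Q;\R^d)$ by Proposition~\ref{prop:conv-unf-op}.

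Fix $\nu$. By Proposition~\ref{prop:isometry} and the $p$-equiintegrability of $\{v_{\nu,n}\}$ provided by Proposition~\ref{thm:liminf-A-free-1}, the sequence $w_{\nu,n}:=T_{\ep_\nu^n}v_{\nu,n}$ is bounded and $p$-equiintegrable in $L^p(\R^N\times Q;\R^d)$. Up to a (not relabeled) subsequence, $w_{\nu,n}\wk\bar w_\nu$ weakly in $L^p(\R^N\times Q;\R^d)$; equivalently, by Theorem~\ref{thm:equivalent-two-scale}, $v_{\nu,n}\wkts\bar w_\nu$ two-scale. Since $v_{\nu,n}\wk 0$ weakly in $L^p(\Omega;\R^d)$, Proposition~\ref{prop:2-scale-compactness} forces $\iQ\bar w_\nu(x,y)\,dy=0$ for a.e.\ $x$. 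To identify the differential constraint, test $\pdeor v_{\nu,n}\to 0$ in $W^{-1,q}(\Omega;\R^l)$ against $\ep_\nu^n\varphi(\cdot/\ep_\nu^n)\psi(\cdot)$ with $\varphi\in C^\infty_{\rm per}(Q;\R^l)$ and $\psi\in C^\infty_c(\Omega)$; arguing as in the second part of the proof of Lemma~\ref{lemma:A-free-fields}, the dominant contribution in the dual pairing yields $\pdeor_y(x)\bar w_\nu(x,\cdot)=0$ in $W^{-1,p}(Q;\R^l)$ for a.e.\ $x\in\Omega$. Hence $\bar w_\nu\in\mathscr{W}$.

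To pass to the liminf in $n$, I combine the $p$-equiintegrability of $\{w_{\nu,n}\}$ with the uniform convergence $a_{\nu,n}\to x$, the strong convergence $T_{\ep_\nu^n}u\to u$ in $L^p$, and the continuity of $f$ in its first and third arguments; the task reduces to establishing weak lower semicontinuity of
$$\bar w\ \mapsto\ \int_\Omega\int_Q f\bigl(x,y,u(x)+\bar w(x,y)\bigr)\,dy\,dx$$
along weakly convergent, $p$-equiintegrable, $\pdeor_y(x)$-vanishing sequences. Partitioning $\Omega$ into small cubes on which the smooth coefficients $A^i$ are essentially constant, freezing the operator on each cube, and invoking the classical $\pdeor^c$-quasiconvex lower semicontinuity of \cite{fonseca.muller} for the frozen constant-coefficient problems — adjusting competitors via the projection operator $\Pi(x)$ of Lemma~\ref{lemma:proj-operator} so that they lie in the kernel of $\pdeor_y(x)$ — I obtain
$$\liminf_{n\to+\infty}\iO f\bigl(x,\nu n x,u(x)+v_{\nu,n}(x)\bigr)\,dx\ \geq\ \int_\Omega\int_Q f\bigl(x,y,u(x)+\bar w_\nu(x,y)\bigr)\,dy\,dx.$$

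Finally, since $\bar w_\nu\in\mathscr{W}$, for every $m\in\N$ the map $(x,y)\mapsto\bar w_\nu(x,my)$ also belongs to $\mathscr{W}$ (by $Q$-periodicity of $\bar w_\nu$ in $y$ and linearity of $\pdeor_y(x)$). Changing variables $z=my$ and exploiting the $Q$-periodicity of $f$ and $\bar w_\nu$ in the second argument,
$$\int_\Omega\int_Q f\bigl(x,y,u(x)+\bar w_\nu(x,y)\bigr)\,dy\,dx=\int_\Omega\int_Q f\bigl(x,my,u(x)+\bar w_\nu(x,my)\bigr)\,dy\,dx\geq \inf_{w\in\mathscr{W}}\int_\Omega\int_Q f(x,my,u(x)+w(x,y))\,dy\,dx.$$
Taking $\liminf_{m\to+\infty}$ on the right yields the bound $\geq\mathscr{E}_{\rm hom}(u)$, and taking $\liminf_{\nu\to+\infty}$ in the inequality from the previous paragraph completes the proof. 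The main obstacle is the lower semicontinuity step: since $f$ is not assumed $\pdeor_y(x)$-quasiconvex in its third argument, weak $L^p$ convergence of the $w_{\nu,n}$ alone does not suffice, and one must combine the inherited $p$-equiintegrability with the freezing-and-localization scheme above and with Lemma~\ref{lemma:proj-operator} in order to transfer the constant-coefficient lower semicontinuity results of \cite{fonseca.muller,fonseca.kromer} to the present $x$-dependent setting.
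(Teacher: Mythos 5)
Your proposal diverges structurally from the paper's proof, and the divergence introduces a gap that I do not believe can be repaired. You unfold at the fine scale $\ep_\nu^n = 1/(\nu n)$, which removes the $n$-oscillation from the second slot of $f$ (it becomes just $y$); you then extract a weak two-scale limit $\bar w_\nu\in\mathscr{W}$ of the inner sequence, and assert that the remaining task is to prove weak lower semicontinuity of $\bar w\mapsto\iOQ f(x,y,u(x)+\bar w(x,y))\,dy\,dx$ along weakly convergent, $p$-equiintegrable, $\pdeor_y(x)$-vanishing sequences. That is the gap: this lower semicontinuity is simply false without $\pdeor$-quasiconvexity of $f$ in the third variable. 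Indeed, \cite{fonseca.muller} shows $\pdeor$-quasiconvexity is \emph{equivalent} to such weak lower semicontinuity; hypotheses \eqref{eq:hp-f-1}--\eqref{eq:growth-p-f-3} do not impose $\pdeor$-quasiconvexity, and $p$-equiintegrability does not remedy the failure (oscillating $\pdeor$-free sequences witnessing non-quasiconvexity are already bounded, hence equiintegrable on $Q$). Your ``freezing-and-localization'' scheme cites \cite{fonseca.muller} for constant-coefficient lower semicontinuity, but that reference cannot supply lower semicontinuity for a non-quasiconvex integrand; so the inequality $\liminf_n\iO f(x,\nu nx,u+v_{\nu,n})\geq\iOQ f(x,y,u+\bar w_\nu)$ is unjustified.

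The paper's proof avoids this entirely by never passing to a weak limit of the inner sequence and never invoking lower semicontinuity. It unfolds at the coarser scale $1/\nu$ (so the factor $ny$ survives in the second argument of $f$ and matches the $\liminf_n$ inside $\mathscr{E}_{\rm hom}$); it then replaces $\hat v_{\nu,z,n}=T_{1/\nu}v_{\nu,n}(z/\nu,\cdot)$ by its projection $w_{\nu,n}=\Pi(x)(\hat v_{\nu,z,n}-\text{mean})-\text{mean}$, which lies in $\mathscr{W}$ exactly, and uses property (P3) of Lemma~\ref{lemma:proj-operator} together with the delicate estimate \eqref{eq:most-difficult-estimate} to show $\|w_{\nu,n}-\sum_z\chi_{Q_{\nu,z}\cap\Omega'}\hat v_{\nu,z,n}\|_{L^q}\to 0$. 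Because $w_{\nu,n(\nu)}\in\mathscr{W}$ is itself an \emph{admissible competitor}, the bound $\iOQ f(x,n(\nu)y,u+w_{\nu,n(\nu)})\geq\inf_{w\in\mathscr{W}}\iOQ f(x,n(\nu)y,u+w)$ is immediate; taking $\liminf$ then gives $\mathscr{E}_{\rm hom}(u)$ with no quasiconvexity assumption. In short: the correct mechanism is ``project and compare to the infimum,'' not ``pass to a weak limit and use lower semicontinuity.'' Your rescaling step $(x,y)\mapsto(x,my)$ and the identification of $\bar w_\nu\in\mathscr{W}$ are both fine in isolation, but they sit atop an unsupportable semicontinuity claim.
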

 \begin{proof}
 Fix $u\in \mathscr{U}$ and let $\{v_{\nu,n}:\,\nu, n\in \N\}$ be $p-$equiintegrable and bounded in $L^p(\Omega;\R^d)$, with 
 \be{eq:vnun-wk}
 v_{\nu,n}\wk 0\quad\text{weakly in }L^p(\Omega;\rd)
 \ee
 and 
\be{eq:vnun-pde}
 \pdeor v_{\nu,n}\to 0\quad\text{strongly in }W^{-1,q}(\Omega;\rl)\quad\text{for every }1<q<p,
 \ee as $n\to +\infty$, for every $\nu\in\N$. Fix $\Omega'\subset\subset \Omega$ and for $z\in\mathbb{Z}^N$ and $n\in\N$, define
 $$Q_{\nu,z}:=\frac{z}{\nu}+\frac{1}{\nu}Q,$$
 and
 $$Z^{\nu}:=\{z\in \mathbb{Z}^N: Q_{\nu,z}\cap \Omega'\neq\emptyset\}.$$
We consider the maps
  $$T_{\frac{1}{\nu}}v_{\nu,n}(x,y):=v_{\nu,n}\Big(\frac{1}{\nu}\floor{\nu x}+\frac{1}{\nu}y\Big)\quad\text{for a.e. }x\in\Omega, y\in Q,$$
where we have extended the sequence $\{v_{\nu,n}\}$ to zero outside $\Omega$. A change of variables yields
\bas
&\int_{\Omega}f(x,\nu n x,u(x)+v_{\nu,n}(x))\,dx\geq \sum_{z\in\Z^{\nu}}\int_{Q_{\nu,z}}f(x,\nu n x,u(x)+v_{\nu,n}(x))\,dx\\
&\quad=\nu^N \sum_{z\in\Z^{\nu}}\iQ f\Big(\frac{z}{\nu}+\frac{y}{\nu}, ny, u\Big(\frac{z}{\nu}+\frac{y}{\nu}\Big)+v_{\nu,n}\Big(\frac{z}{\nu}+\frac{y}{\nu}\Big)\Big)\,dy\\
&\quad=\sum_{z\in\Z^{\nu}}\int_{Q_{\nu,z}}\iQ f\Big(\frac{\floor{\nu x}}{\nu}+\frac{y}{\nu}, ny, T_{\frac{1}{\nu}}u(x,y)+T_{\frac{1}{\nu}}v_{\nu,n}(x,y)\Big)\,dy\,dx\\
&\quad\geq \sum_{z\in\Z^{\nu}}\int_{Q_{\nu,z}\cap\Omega'}\iQ f\Big(\frac{\floor{\nu x}}{\nu}+\frac{y}{\nu}, ny, T_{\frac{1}{\nu}}u(x,y)+T_{\frac{1}{\nu}}v_{\nu,n}(x,y)\Big)\,dy\,dx,
\end{align*}
where the last inequality is due to \eqref{eq:growth-p-f-3}. By \cite[Proposition 3.6 (i)]{fonseca.kromer} and Proposition \ref{prop:conv-unf-op} we conclude that
\ba{eq:almost-final-liminf}
 &\iO f(x,\nu n x, u(x)+v_{\nu,n}(x))\,dx\\
 \nn&\quad\geq \sigma_{\nu}+\sum_{z\in Z^{\nu}}\int_{Q_{\nu,z}\cap \Omega'}\iQ f(x,ny,u(x)+\hat{v}_{\nu,z,n}(y))\,dy\,dx,
 \end{align}
 where $$\hat{v}_{\nu,z,n}(y):=T_{\frac{1}{\nu}}v_{\nu,n}\Big(\frac{z}{\nu},y\Big)$$
 for a.e. $y\in Q$, and $\sigma_{\nu}\to 0$ as $\nu\to +\infty$. The sequence $\{\hat{v}_{\nu,z,n}\}$ is $p$-equiintegrable by \cite[Proposition A.2]{fonseca.kromer}, and is uniformly bounded by \eqref{eq:vnun-wk} and Proposition \ref{prop:isometry}, since 
 $$\iQ|\hat{v}_{\nu,z,n}(y)|^p\,dy=\frac{1}{\nu^N}\int_{Q_{\nu,z}}|v_{\nu,n}(x)|^p\,dx.$$
 By the boundedness of $\{v_{\nu,n}:\,\nu,n\in\N\}$ in $L^p(\Omega;\R^d)$,
  and by \eqref{eq:vnun-wk} there holds
 \be{eq:vznun-wk}\hat{v}_{\nu,z,n}\wk 0\quad\text{weakly in }L^p(Q;\rd)\ee
 as $n\to +\infty$, for every $z\in Z^{\nu}$, $\nu\in\N$. Denoting by $\chi_{Q_{\nu,z}\cap\Omega'}$ the characteristic functions of the sets ${Q_{\nu,z}\cap\Omega'}$, we claim that
 \be{eq:most-difficult-estimate}
 \limsup_{\nu\to +\infty}\limsup_{n\to+\infty}\Big\|\Big\|\pdeor_y(x)\sum_{\nu\in\Z^{\nu}}\chi_{Q_{\nu,z}\cap\Omega'}(x)\hat{v}_{\nu,z,n}(y)\Big\|_{W^{-1,q}(Q;\rl)}\Big\|_{L^q(\Omega)}=0\ee
 for every $1< q<p$. Indeed, fix $1<q<p$, and let $\psi\in W^{1,q'}_0(Q;\rl)$. Then
 \bas
\Big|\scal{\pdeor_y\Big(\frac{z}{\nu}\Big)\hat{v}_{\nu,z,n}}{\psi}\Big|&= \Big|\iQ\sum_{i=1}^N A^i\Big(\frac{z}{\nu}\Big)v_{\nu,n}\Big(\frac{z}{\nu}+\frac{y}{\nu}\Big)\cdot\frac{\partial \psi(y)}{\partial y_i}\,dy\Big|\\
 &=\nu^N\Big|\int_{Q_{\nu,z}}\sum_{i=1}^N A^i\Big(\frac{z}{\nu}\Big)v_{\nu,n}(x)\cdot\frac{\partial\psi}{\partial y_i}(\nu x-z)\,dx\Big|.
 \end{align*}
 Adding and subtracting to the previous expression the quantity
 $$\nu^N\int_{Q_{\nu,z}}\sum_{i=1}^N A^i(x)v_{\nu,n}(x)\cdot\frac{\partial\psi}{\partial y_i}(\nu x-z)\,dx,$$
 and setting $\phi^{\nu}_z(x):=\psi(\nu x-z)$ for a.e. $x\in\Omega$,
 we obtain the estimate
 \begin{align*}
& \Big|\scal{\pdeor_y\Big(\frac{z}{\nu}\Big)\hat{v}_{\nu,z,n}}{\psi}\Big|\\
&\quad\leq \nu^N\Big\|\sum_{i=1}^N \Big(A^i\Big(\frac{z}{\nu}\Big)-A^i(x)\Big)v_{\nu,n}(x)\Big\|_{L^q(Q_{\nu,z};\rl)}\Big\|\frac{\partial\psi}{\partial y_i}(\nu x-z)\Big\|_{L^{q'}(Q_{\nu,z};\rl)}\\
&\qquad +\nu^{N-1}\Big\|\frac{\partial}{\partial x_i}(A^i(x)v_{\nu,n}(x))\Big\|_{W^{-1,q}(Q_{\nu,z};\rl)}\|\phi^{\nu}_z\|_{W^{1,q'}_0(Q_{\nu,z};\rl)}.
 \end{align*}
 A change of variables yields the upper bound
 $$\Big\|\frac{\partial\psi}{\partial y_i}(\nu x-z)\Big\|_{L^{q'}(Q_{\nu,z};\rl)}+\frac{\|\phi^\nu_z\|_{W^{1,q'}_0(Q_{\nu,z};\rl)}}{\nu}
 \leq \frac{C}{\nu^{\frac{N}{q'}}}\|\psi\|_{W^{1,q'}_0(Q;\rl)}.$$
 Thus, by the regularity of the operators $A^i$,
 \ba{eq:compl1}
 \Big|\scal{\pdeor_y\Big(\frac{z}{\nu}\Big)\hat{v}_{\nu,z,n}}{\psi}\Big|&\leq C\nu^{\frac{N}{q}-1}\Big\|\sum_{i=1}^N\frac{\partial A^i}{\partial x_i}\Big\|_{L^{\infty}(Q;\M^{l\times d})}\|v_{\nu,n}\|_{L^q(Q_{\nu,z};\rd)}\|\psi\|_{W^{1,q'}_0(Q;\rl)}\\
 \nn&\quad+C\nu^{\frac{N}{q}}\Big\|\frac{\partial}{\partial x_i}(A^i(x)v_{\nu,n}(x))\Big\|_{W^{-1,q}(Q_{\nu,z};\rl)}\|\psi\|_{W^{1,q'}_0(Q;\rl)}.
 \end{align}
  Using again the Lipschitz regularity of the operators $A^i$, $i=1,\cdots,N$, we deduce
 \ba{eq:compl2}
& \|\pdeor_y(x)\hat{v}_{\nu,z,n}(y)\|_{W^{-1,q}(Q;\rl)}\leq \sum_{i=1}^N \Big\|A^i(x)-A^i\Big(\frac{z}{\nu}\Big)\Big\|_{L^{\infty}(Q;\M^{l\times d})}\|\hat{v}_{\nu,z,n}\|_{L^q(Q;\rl)}\\
 \nn&\qquad+\Big\|\pdeor_y\Big(\frac{z}{\nu}\Big)\hat{v}_{\nu,z,n}(y)\Big\|_{W^{-1,q}(Q;\rl)}\\
\nn&\quad \leq \frac{C}{\nu}\Big\|\sum_{i=1}^N\frac{\partial A^i}{\partial x_i}\Big\|_{L^{\infty}(Q;\M^{l\times d})}\|\hat{v}_{\nu,z,n}\|_{L^q(Q;\rl)}+\Big\|\pdeor_y\Big(\frac{z}{\nu}\Big)\hat{v}_{\nu,z,n}(y)\Big\|_{W^{-1,q}(Q;\rl)}
 \end{align}
 for a.e. $x\in Q_{\nu,z}$. Hence, by \eqref{eq:compl1} and \eqref{eq:compl2}, we obtain
 \bas
 &\sum_{z\in\Z^{\nu}}\int_{Q_{\nu,z}\cap\Omega'}\|\pdeor_y(x)\hat{v}_{\nu,z,n}(y)\|_{W^{-1,q}(Q;\R^l)}^q\,dx\\
&\quad \leq \sum_{z\in\Z^{\nu}}\frac{C}{\nu^q}\int_{Q_{\nu,z}\cap\Omega'} \Big\|\sum_{i=1}^N\frac{\partial A^i}{\partial x_i}\Big\|^q_{L^{\infty}(Q;\M^{l\times d})}\|\hat{v}_{\nu,z,n}\|^q_{L^q(Q;\R^d)}\,dx\\
&\qquad+\sum_{z\in\Z^{\nu}}\int_{Q_{\nu,z}\cap\Omega'}\|\pdeor_y\Big(\frac{z}{\nu}\Big)\hat{v}_{\nu,z,n}(y)\|_{W^{-1,q}(Q;\R^l)}^q\,dx\\
&\quad\leq \frac{C}{\nu^q}\Big\|\sum_{z\in \Z^{\nu}}\chi_{Q_{\nu,z}\cap\Omega'}(x)\hat{v}_{\nu,z,n}(y)\Big\|_{L^q(\Omega\times Q;\R^d)}^q+\frac{C}{\nu^q}\|v_{\nu,n}\|_{L^q(\Omega;\R^d)}\\
&\qquad+\frac{C}{\nu^q}\sum_{z\in \Z^{\nu}}\int_{Q_{\nu,z}\cap\Omega'}(x)\hat{v}_{\nu,z,n}\Big\|\sum_{i=1}^N\frac{\partial A^i v_{\nu,n}}{\partial x_i}\Big\|_{W^{-1,q}(Q_{\nu,z};\rl)}\\
&\quad\leq \frac{C}{\nu^q}\Big\|\sum_{z\in \Z^{\nu}}\chi_{Q_{\nu,z}\cap\Omega'}(x)\hat{v}_{\nu,z,n}(y)\Big\|_{L^q(\Omega\times Q;\R^d)}^q+\frac{C}{\nu^q}\|v_{\nu,n}\|_{L^q(\Omega;\R^d)}\\
&\qquad+C\nu^{\frac{N}{q}}\Big\|\sum_{i=1}^N\frac{\partial A^i v_{\nu,n}}{\partial x_i}\Big\|_{W^{-1,q}(\Omega;\rl)}
 \end{align*}
 Property \eqref{eq:most-difficult-estimate} follows now by \eqref{eq:vnun-wk} and \eqref{eq:vnun-pde}, and by the compact embedding of $L^p$ into $W^{-1,p}$.

 Consider the maps
 $$w_{\nu,n}(x,y):=\begin{cases}\Pi(x)\Big(\hat{v}_{\nu,z,n}(y)-\iQ \hat{v}_{\nu,z,n}(\xi)\,d\xi\Big)-\iQ\Pi(x)\Big(\hat{v}_{\nu,z,n}(y)-\iQ \hat{v}_{\nu,z,n}(\xi)\,d\xi\Big)\,dy\\
 \qquad\qquad\text{for }x\in Q_{\nu,z}\cap\Omega',\,z\in Z^{\nu},\,y\in Q,\\
 0\quad\qquad\phantom{ii}\text{otherwise in }\Omega.\end{cases}$$
 By Lemma \ref{lemma:proj-operator} the sequence $\{w_{\nu,n}\}$ is $p$-equiintegrable, and
 $$\pdeor_y w_{\nu,n}=0\quad\text{in }W^{-1,p}(Q;\R^l)\quad\text{for a.e. }x\in\Omega,$$
 for all $\nu,n\in N$. In particular, $\{w_{\nu,n}\}\subset \mathscr{W}$. We claim that
 \be{eq:replace-vznun}
 \Big\|w_{\nu,n}(x,y)-\sum_{z\in Z^{\nu}}\chi_{Q_{\nu,z}\cap\Omega'}(x)\hat{v}_{\nu,z,n}(y)\Big\|_{L^q(\Omega\times Q;\rd)}\to 0
 \ee
 as $n\to+\infty$, $\nu\to +\infty$, for every $1<q<p$.
 
 In fact, by Lemma \ref{lemma:proj-operator} there holds
 \bas
 \Big\|\Pi(x)\Big(\hat{v}_{\nu,z,n}(y)-\iQ\hat{v}_{\nu,z,n}(\xi)\,d\xi\Big)-\hat{v}_{\nu,z,n}(y)\Big\|^q_{L^q(Q;\R^d)}\\
 \leq C\Big(\|\pdeor_y(x)\hat{v}_{\nu,z,n}(y)\|^q_{W^{-1,q}(Q;\R^l)}+\Big|\iQ\hat{v}_{\nu,z,n}(y)\,dy\Big|^q\Big).
 \end{align*}
 Therefore
 \ba{eq:liminf-star}
 &\Big\|\sum_{z\in\Z^{\nu}}\chi_{Q_{\nu,z}\cap\Omega'}(x)\Big(\Pi(x)\Big(\hat{v}_{\nu,z,n}(y)-\iQ\hat{v}_{\nu,z,n}(\xi)\,d\xi\Big)-\hat{v}_{\nu,z,n}(y)\Big)\Big\|^q_{L^q(\Omega\times Q;\R^d)}\\
 \nn&\leq C\Big(\sum_{z\in\Z^{\nu}}\int_{Q_{\nu,z}\cap\Omega'}\|\pdeor_y(x)\hat{v}_{\nu,z,n}(y)\|^q_{W^{-1,q}(Q;\R^l)}\,dx\\
 \nn&\quad+\sum_{z\in\Z^{\nu}}\int_{Q_{\nu,z}\cap\Omega'}\Big|\iQ\hat{v}_{\nu,z,n}(y)\,dy\Big|^q\,dx\Big).
 \end{align}
 The first term in the right-hand side of \eqref{eq:liminf-star} converges to zero as $n\to +\infty$ and $\nu\to +\infty$, in this order, owing to \eqref{eq:most-difficult-estimate}.
 The second term in the right-hand side of \eqref{eq:liminf-star} converges to zero as $n\to +\infty$ and $\nu\to +\infty$, in this order, by the dominated convergence theorem, owing to \eqref{eq:vznun-wk} and the uniform boundedness in $L^p$ of $\{\hat{v}_{\nu,z,n}\}$. Hence, both the left-hand side of \eqref{eq:liminf-star} and the quantity
 $$\iQ\Big\{\sum_{z\in\Z^{\nu}}\chi_{Q_{\nu,z}\cap\Omega'}(x)\Pi(x)\Big(\hat{v}_{\nu,z,n}(y)-\iQ\hat{v}_{\nu,z,n}(\xi)\,d\xi\Big)\Big\}\,dy\to 0,$$
 converge to zero as $n\to +\infty$ and $\nu\to +\infty$, and we obtain \eqref{eq:replace-vznun}.
 
 Up to the extraction of a (not relabeled) subsequence, we can assume that
 \ba{eq:en-is-lim}
& \liminf_{\nu\to +\infty}\liminfn \sum_{z\in Z^{\nu}}\int_{Q_{\nu,z}\cap \Omega'}\iQ f(x,ny,u(x)+\hat{v}_{\nu,z,n}(y))\,dy\,dx\\
 \nn&\quad =\lim_{\nu\to +\infty}\liminfn \sum_{z\in Z^{\nu}}\int_{Q_{\nu,z}\cap \Omega'}\iQ f(x,ny,u(x)+\hat{v}_{\nu,z,n}(y))\,dy\,dx.
  \end{align}
Hence, in view of \eqref{eq:replace-vznun} and \eqref{eq:en-is-lim} we can extract a subsequence $\{n(\nu)\}$ such that
\ba{eq:point-en}
\lim_{\nu\to +\infty}\liminf_{n\to +\infty} \sum_{z\in Z^{\nu}}\int_{Q_{\nu,z}\cap \Omega'}\iQ f(x,ny,u(x)+\hat{v}_{\nu,z,n}(y))\,dy\,dx\\
=\lim_{\nu\to +\infty}\sum_{z\in Z^{\nu}}\int_{Q_{\nu,z}\cap \Omega'}\iQ f(x,n(\nu)y,u(x)+\hat{v}_{\nu,z,n(\nu)}(y))\,dy\,dx,
\end{align}
and
\be{eq:point2-en}
w_{\nu,n(\nu)}(x,y)-\sum_{z\in Z^{\nu}}\chi_{Q_{\nu,z}\cap\Omega'}(x)\hat{v}_{\nu,z,n(\nu)}(y)\to 0\quad\text{strongly in }L^q(\Omega\times Q;\rd),
\ee
for every $1<q<p$.
  Going back to \eqref{eq:almost-final-liminf}, by \cite[Proposition 3.5 (ii)]{fonseca.kromer}, \eqref{eq:point-en} and \eqref{eq:point2-en},
 \bas
 &\liminf_{\nu\to +\infty}\liminf_{n\to+\infty}\iO f(x,\nu n x, u(x)+v_{\nu,n}(x))\,dx\\
 &\quad\geq \liminf_{\nu\to +\infty}\int_{\Omega'}\iQ f(x, n(\nu) y, u(x)+w_{\nu,n(\nu)}(x,y))\,dy\,dx.
 \end{align*}
 By the $p$-equiintegrability of $\{w_{\nu,n(\nu)}\}$ and by \eqref{eq:growth-p-f-3}, letting $|\Omega\setminus\Omega'|$ tend to zero, we conclude
 \bas
 &\liminf_{\nu\to +\infty}\liminf_{n\to+\infty}\iO f(x,\nu n x, u(x)+v_{\nu,n}(x))\,dx\\
  &\quad\geq \liminf_{\nu\to +\infty}\int_{\Omega}\iQ f(x, n(\nu) y, u(x)+w_{\nu,n(\nu)}(x,y))\,dy\,dx\\
 &\quad\geq \liminf_{\nu\to +\infty} \inf_{w\in \mathscr{W}}\iOQ f(x,n(\nu)y,w(x,y))\,dy\,dx\\
 &\quad\geq \liminf_{n\to +\infty}\inf_{w\in \mathscr{W}}\iOQ f(x,ny,w(x,y))\,dy\,dx={\mathscr{E}}_{\rm hom}(u).
 \end{align*}
 \end{proof}
  \begin{proof}[Proof of Theorem \ref{thm:main-result-A-free}]
  The proof follows by combining Corollary \ref{thm:limsup-A-free} with Propositions \ref{thm:liminf-A-free-1} and \ref{thm:liminf-A-free-2}.
 \end{proof}
 \begin{corollary}
 \label{cor:local}
Under the same assumptions of Theorem \ref{thm:main-result-A-free}, for every $u\in\mathscr{U}_F$
$${\mathscr{E}}_{\rm hom}(u)=\iO f_{\rm hom}(x,u(x))\,dx,$$
where
$$f_{\rm hom}(x,u(x))=\liminfn \inf_{v\in\C_x}\iQ f(x,ny,u(x)+v(y))\,dy,$$
and $\C_x$ is the class defined in \eqref{eq:def-c-x}.
\end{corollary}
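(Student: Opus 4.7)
Throughout, set $f_n(x,\xi) := \inf_{v \in \cx} \iq f(x, ny, \xi + v(y))\,dy$, so that $f_{\rm hom}(x,\xi) = \liminfn f_n(x,\xi)$. We prove the two inequalities separately; the definition of $\mathscr{E}_{\rm hom}$ in \eqref{eq:def-E-hom} is read with integrand $f(x,ny,u(x)+w(x,y))$.

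\emph{Lower bound.} Fix $w \in \mathscr{W}$ and $n \in \N$. For a.e.\ $x \in \Omega$ the slice $w(x,\cdot) \in L^p_{\mathrm{per}}(\R^N;\R^d)$ has zero average and is $\pdeor_y(x)$-free in $W^{-1,p}(Q;\R^l)$. The smoothing-plus-projection argument used in the second part of the proof of Lemma \ref{lemma:A-free-fields} (convolution in $y$ followed by composition with $\Pi(x)$) shows that such a slice lies in the $L^p(Q;\R^d)$-closure of $\cx$, and together with the continuity of $f(x,y,\cdot)$ and the growth bound \eqref{eq:growth-p-f-3} this gives
\[
\iq f(x, ny, u(x) + w(x,y))\,dy \geq f_n(x, u(x)) \quad \text{for a.e.\ } x\in\Omega.
\]
Measurability of $f_n(\cdot, u(\cdot))$ is obtained by realizing the pointwise infimum as a countable one through the approximants $\Pi(x)\varphi_k$ for a dense family $\{\varphi_k\} \subset C^{\infty}_{\rm per}(\R^N;\R^d)$, using (P3) of Lemma \ref{lemma:proj-operator}. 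Integration, passing to $\inf_w$, and Fatou's lemma in $n$ then yield $\mathscr{E}_{\rm hom}(u) \geq \int_\Omega f_{\rm hom}(x,u(x))\,dx$.

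\emph{Upper bound.} The plan is to construct, for each $\nu \in \N$, an integer $n^\nu \to +\infty$ and an admissible $w^\nu \in \mathscr{W}$ with
\[
\iOQ f(x, n^\nu y, u(x) + w^\nu(x,y))\,dy\,dx \leq \int_\Omega f_{\rm hom}(x,u(x))\,dx + o(1)_{\nu \to +\infty},
\]
so that passing to the liminf in $n$ closes the proof. Tile $\Omega$ by finitely many disjoint cubes $\{Q_j^\nu\}$ of side $1/\nu$ with representatives $x_j^\nu \in Q_j^\nu$ at Lebesgue points of $u$. Because the grid is finite and $f_{\rm hom}(x_j^\nu, u(x_j^\nu)) = \liminfn f_n(x_j^\nu, u(x_j^\nu))$, a diagonal extraction yields a single $n^\nu \geq \nu$ and smooth $v_j^\nu \in \mathcal{C}_{x_j^\nu}$ with $\iq f(x_j^\nu, n^\nu y, u(x_j^\nu) + v_j^\nu(y))\,dy \leq f_{\rm hom}(x_j^\nu, u(x_j^\nu)) + 1/\nu$ for every $j$. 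Set $w^\nu(x,y) := \sum_j \chi_{Q_j^\nu}(x)\,\Pi(x)v_j^\nu(y)$; properties (P1)--(P2) ensure $w^\nu \in \mathscr{W}$, while (P5) combined with $\Pi(x_j^\nu)v_j^\nu = v_j^\nu$ (valid because $v_j^\nu \in \mathcal{C}_{x_j^\nu}$ lies in the range of $\Pi(x_j^\nu)$) gives $\Pi(x)v_j^\nu \to v_j^\nu$ uniformly on $Q_j^\nu$ as $\nu \to +\infty$. A cube-by-cube comparison using the Carath\'eodory structure of $f$, the $L^p$-modulus of continuity of $u$ at its Lebesgue points, a standard truncation of $u$ to handle unboundedness, and the growth \eqref{eq:growth-p-f-3}, bounds $\iOQ f(x, n^\nu y, u(x) + w^\nu(x,y))\,dy\,dx$ by $\sum_j |Q_j^\nu| f_{\rm hom}(x_j^\nu, u(x_j^\nu)) + |\Omega|/\nu + o(1)$; a standard Riemann-sum argument (after $L^1$-approximating $f_{\rm hom}(\cdot,u(\cdot))$ by a continuous function) closes the upper bound.

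The main obstacle is the cube-by-cube comparison above, which simultaneously requires the measurable piecewise-constant selection of the correctors $v_j^\nu$, the diagonal extraction of a single integer $n^\nu$ realizing the liminf at every grid point (bypassing the absence of a reverse Fatou inequality for $\liminfn \int f_n$), and the smooth replacement of $\Pi(x)v_j^\nu$ by $v_j^\nu$---all of which are delivered by Lemma \ref{lemma:proj-operator}, and most crucially by the regularity assertion (P5) of the projection together with its identity action on $\cx$.
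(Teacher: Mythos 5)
Your strategy for both bounds matches the template the paper refers to in \cite{fonseca.kromer}, and the lower bound argument (slicing plus Fatou) is sound. However, the upper bound has a genuine gap: the claim that ``a diagonal extraction yields a single $n^\nu$'' simultaneously nearly realizing $\liminf_{n}f_n(x_j^\nu,u(x_j^\nu))$ at all (finitely many) grid points $x_j^\nu$ does not follow from the finiteness of the grid. For a general family of sequences, the liminfs at different points may be attained only along pairwise disjoint subsequences, so no common index need exist. The missing ingredient is the subadditivity of $f_n$ in $n$: if $v\in\cx$ and $m\in\N$, then $y\mapsto v(my)$ again lies in $\cx$, and since $f(x,\cdot,\xi)$ is $Q$-periodic, the change of variables $y\mapsto my$ gives $\int_Q f(x,mny,\xi+v(my))\,dy=\int_Q f(x,ny,\xi+v(y))\,dy$, so $f_{mn}(x,\xi)\leq f_n(x,\xi)$ for all $m,n\in\N$. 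This forces $\liminf_n f_n(x,\xi)=\inf_n f_n(x,\xi)$, and one then takes $n^\nu$ to be any common multiple, exceeding $\nu$, of integers $m_j$ nearly attaining the infimum at each $x_j^\nu$. That identity is precisely what the paper's citation to \cite[Remark 3.3 (ii)]{fonseca.kromer} supplies. Your concluding paragraph misattributes the common-$n^\nu$ step to Lemma~\ref{lemma:proj-operator} and (P5), which handle the measurable gluing and the smoothness of $x\mapsto\Pi(x)$ but do not resolve the selection of a common index. A secondary concern in the same step: the uniform convergence $\Pi(x)v_j^\nu\to v_j^\nu$ on $Q_j^\nu$ as $\nu\to+\infty$ requires control of $\|v_j^\nu\|_{L^p(Q;\rd)}$ relative to $1/\nu$, which is not automatic without a coercivity lower bound on $f$ and needs further justification.
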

\begin{proof}
We omit the proof of this corollary as it follows from \cite[Remark 3.3 (ii)]{fonseca.kromer} and by adapting the arguments in \cite[Corollary 3.2]{fonseca.kromer} and Lemma \ref{lemma:measurability} below.
\end{proof}
 \begin{proof}[Proof of Theorem \ref{thm:main}]
 The thesis results from Theorem \ref{thm:main-result-A-free} and Corollary \ref{cor:local}.
 \end{proof}
We conclude this section by showing that Theorem \ref{thm:main-result-A-free} yields a relaxation result in the framework of $\pdeor-$quasiconvexity with variable coefficients. Before stating the corollary, we prove a preliminary lemma which guarantees the measurability of the function $x\mapsto \qa f(x,u(x))$ for every $u\in L^p(\Omega;\rd)$.
 \begin{lemma}
\label{lemma:measurability}
Let $1<p<+\infty$, $u\in L^p(\Omega;\rd)$, let $\pdeor$ be as in Theorem \ref{thm:main}, and let $f:\Omega\times \rd\to [0,+\infty)$ be a Carath\'eodory function satisfying 
$$0\leq f(x,\xi)\leq C(1+|\xi|^p)\quad\text{for a.e. }x\in\Omega\times\rd,\text{ and for all }\xi\in\R^d.$$
 Then the map
$$x\mapsto \qa f(x,u(x))$$
is measurable in $\Omega$.
\end{lemma}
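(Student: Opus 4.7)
The plan is to write $\qa f(x,u(x))$ as an infimum of a countable family of measurable functions of $x$, using the pointwise projection $\Pi(x)$ from Lemma \ref{lemma:proj-operator} as a bridge from an $x$-independent test family into the $x$-dependent class $\cx$. The key virtue of $\Pi(x)$ is that it combines two features: the map $x\mapsto \Pi(x)\psi$ depends regularly on $x$ (so measurability is free), while its range lies in $\cx$ (so it captures all admissible competitors up to $L^p$-approximation).

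Concretely, I would fix a countable family $\{\psi_k\}_{k\in\N}\subset C^\infty_{\rm per}(\R^N;\R^d)$ of zero-mean maps, chosen so as to be dense in the closed subspace of $L^p_{\rm per}(\R^N;\R^d)$ consisting of maps with $\iQ v=0$, and set
$$w_k(x,y):=\Pi(x)\psi_k(y)\quad\text{for }x\in\Omega,\ y\in\R^N.$$
The Fourier representation of $\Pi(x)$ (which discards the $\lambda=0$ mode) together with (P2) guarantees that $w_k(x,\cdot)\in\cx$ for every $x\in\Omega$, and (P5) shows $w_k\in C^1(\Omega;C^\infty_{\rm per}(\R^N;\R^d))$, so $w_k$ is jointly continuous on $\Omega\times\R^N$.

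Next I would prove
$$\qa f(x,u(x))=\inf_{k\in\N}\iQ f\bigl(x,\,u(x)+w_k(x,y)\bigr)\,dy\quad\text{for a.e.\ }x\in\Omega.$$
The inequality $\leq$ is immediate since each $w_k(x,\cdot)$ belongs to $\cx$. For the reverse inequality, given $w\in\cx$, the identity $\pdeor_y(x)w=0$ forces $\hat w(\lambda)\in\ker\A(x,\lambda)$ for all $\lambda$, hence $\Pi(x)w=w$. Approximating $w$ in $L^p(Q;\R^d)$ by a subsequence $\psi_{k_j}$ and using the uniform-in-$x$ $L^p$-boundedness of $\Pi(x)$ (obtained from (P3) via $\|\Pi(x)\psi\|_{L^p}\leq \|\psi\|_{L^p}+\|\psi-\Pi(x)\psi\|_{L^p}\leq \|\psi\|_{L^p}+C\|\pdeor_y(x)\psi\|_{W^{-1,p}}$ and the uniform bound $\|\pdeor_y(x)\|_{L^p\to W^{-1,p}}\leq C$ granted by the smoothness of the $A^i$), we obtain $w_{k_j}(x,\cdot)=\Pi(x)\psi_{k_j}\to w$ in $L^p(Q;\R^d)$. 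The $p$-growth of $f$ and Vitali's convergence theorem then pass the limit inside the integral.

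Finally, for each fixed $k$ the map $(x,y)\mapsto u(x)+w_k(x,y)$ is Lebesgue measurable on $\Omega\times Q$ (sum of a measurable and a continuous function), so by the Carathéodory property of $f$ the composition $(x,y)\mapsto f(x,u(x)+w_k(x,y))$ is measurable, integrability follows from the $p$-growth bound, and Fubini yields measurability of $x\mapsto \iQ f(x,u(x)+w_k(x,y))\,dy$; a countable infimum of measurable functions is measurable, completing the proof. The step I expect to scrutinize most is the approximation/density argument above, since everything hinges on the uniform $L^p$-continuity of $\Pi(x)$ and on $L^p$-continuity of $v\mapsto \iQ f(x,\xi+v(y))\,dy$—both of which are standard, but must be verified with constants independent of $x$.
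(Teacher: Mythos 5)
Your proof is correct, and it takes a genuinely different route from the paper's.

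The paper never parametrizes $\cx$ directly. Instead it restricts to test functions of $L^p$-norm at most $r$ (and shows $\qa f=\inf_r \qa^r f$), then for fixed $r$ replaces the hard constraint $\pdeor_y(x)w=0$ by a penalty $n\|\pdeor_y(x)w\|_{W^{-1,p}(Q;\rl)}$ over the $x$-independent ball $\{w\in L^p(Q;\rd): \iq w=0,\ \|w\|_{L^p}\le r\}$, and shows $\qa^rf=\sup_n\qa^{r,n}f$. Measurability of each $\qa^{r,n}f(\cdot,u(\cdot))$ is then immediate because the admissible test set no longer depends on $x$, so a single countable dense family works. The delicate step is the equality $\qa^rf=\sup_n\qa^{r,n}f$, which is proved by extracting a weak limit of near-minimizers and invoking the lower-semicontinuity machinery of Braides--Fonseca--Leoni (their Lemma 3.1).

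You instead keep the constraint sharp and use the pointwise projection $\Pi(x)$ of Lemma \ref{lemma:proj-operator} as a measurable (indeed $C^1$, via (P5)) surjection from a fixed countable dense family $\{\psi_k\}$ of zero-mean smooth periodic maps onto a subset of $\cx$ that is $L^p$-dense in $\cx$. The identity $\Pi(x)w=w$ for $w\in\cx$, the uniform-in-$x$ operator bound derived from (P3), and Vitali's theorem then give $\qa f(x,u(x))=\inf_k\iq f(x,u(x)+\Pi(x)\psi_k(y))\,dy$ directly, and each term is measurable by joint measurability of Carath\'eodory integrands plus Fubini. This avoids the penalization/compactness machinery entirely and exploits more fully the structure already established in Lemma \ref{lemma:proj-operator}; what it costs is that it would not extend as readily to settings where a pointwise projection with the regularity in (P5) is unavailable (for instance, coefficients that are merely $W^{1,\infty}$), whereas the paper's penalization route only needs $\pdeor_y(x)$ to act boundedly from $L^p$ to $W^{-1,p}$. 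Within the smooth-coefficient hypotheses of the lemma, both arguments are complete; yours is shorter and more transparent.

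One point worth making explicit when you write this up: the inequality $\qa f(x,u(x))\le\inf_k\iq f(x,u(x)+\Pi(x)\psi_k(y))\,dy$ requires $\Pi(x)\psi_k(\cdot)\in\cx$, i.e.\ smoothness, zero mean, and $\pdeor_y(x)$-freeness of $\Pi(x)\psi_k$. Smoothness in $y$ follows because $\PP(x,\cdot)$ is $0$-homogeneous and smooth away from the origin and $\hat\psi_k$ decays rapidly; zero mean is built into the Fourier representation (the $\lambda=0$ mode is dropped); and $\pdeor_y(x)$-freeness is (P2). You gesture at all three but should state them, since they are exactly what makes the infimum over $k$ an upper bound for the infimum over $\cx$.
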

\begin{proof}
We first remark that
\be{eq:bounded-test-fc}
\qa f(x,u(x))=\inf_{r\in (0,+\infty)} \qa^r f(x,u(x))\quad\text{for a.e. }x\in\Omega,
\ee
where
\bas
\qa^r f(x,u(x)):=\inf &\Big\{ \iQ f(x,u(x)+w(y))\,dy:\, w\in\C_x\text{ and }\|w\|_{L^p(Q;\rd)}\leq r\Big\},
\end{align*}
and $\cx$ is the class defined in \eqref{eq:def-c-x}.
Clearly $$\qa^{r} f(x,u(x))\geq \qa f(x,u(x))$$
for a.e. $x\in\Omega$, for every $r\in\N$. Moreover, for every $\ep>0$ there exists $w_{\ep}\in \C_x$ such that
\bas
\qa f(x,u(x))&\geq \iQ f(x,u(x)+w_{\ep}(y))\,dy-\ep\\
&\quad\geq \qa^{\|w_{\ep}\|_{L^p(Q;\rd)}}f(x,u(x))-\ep\geq \inf_{r\in\N}\qa^r f(x,u(x))-\ep,
\end{align*}
which in turn implies the second inequality in \eqref{eq:bounded-test-fc}. 

By \eqref{eq:bounded-test-fc} it is enough to show that $x\mapsto \qa^r f(x,u(x))$ is measurable for every $r\in\N$. We claim that
\be{eq:same-set}
\qa^r f(x,u(x))=\sup_{n\in\N} \qa^{r,n} f(x,u(x))\quad\text{for a.e. }x\in\Omega,
\ee
where
\bas
\qa^{r,n} f(x,u(x))=\inf&\Big\{\iQ f(x,u(x)+w(y))\,dy+n\|\pdeor_y(x) w(x,\cdot)\|_{W^{-1,p}(Q;\rl)}:\\
&\quad w\in L^p(Q;\rd),\,\iQ w(y)\,dy=0\text{ and }\|w\|_{L^p(Q;\rd)}\leq r\Big\}.
\end{align*}
Clearly,
$$\qa^{r,n}f(x,u(x))\leq \qa^r f(x,u(x))$$
for a.e. $x\in\Omega$, for all $n\in\N$. To prove the opposite inequality, fix $x\in\Omega$, and for every $n\in\N$, let $w_n\in L^p(Q;\rd)$, with $\iQ w_n(y)\,dy=0$ and $\|w_n\|_{L^p(Q;\rd)}\leq r$, be such that
\ba{eq:wn-pde}
&\iQ f(x,u(x)+w_n(y))\,dy+n\|\pdeor_y(x) w_n\|_{W^{-1,p}(Q;\rl)}\\
&\nonumber\quad\leq \qa^{r,n} f(x,u(x))+\frac{1}{n}\leq f(x,u(x))+\frac{1}{n}
\end{align}
(the last inequality holds because $0\in\C_x$ for every $x\in\Omega$). Since $\{w_n\}$ is uniformly bounded in $L^p(Q;\rd)$ and by \eqref{eq:wn-pde}
$$\|\pdeor_y(x) w_n\|_{W^{-1,p}(Q;\rl)}\to 0$$
as $n\to +\infty$, there exists a map $w\in L^p(Q;\rd)$, with $\iQ w(y)\,dy=0$, $\|w\|_{L^p(Q;\rd)}\leq r$ and $\pdeor_y(x) w=0$ such that
$$w_n\wk w\quad\text{weakly in }L^p(Q;\rd).$$
By \cite[Lemma 3.1]{braides.fonseca.leoni} we can construct a sequence $\{\tilde{w}_n\}$ such that $\iQ \tilde{w}_n(y)\,dy=0$, $\pdeor_y(x) \tilde{w}_n=0$ for every $n\in\N$, and
\bas
\liminfn \iQ f(x,u(x)+\tilde{w}_n(y))\,dy&\leq \liminfn \iQ f(x,u(x)+w_n(y))\,dy\\
&\leq\sup_{n\in\N}\qa^{r,n} f(x,u(x)).
\end{align*}
In view of \eqref{eq:bounded-test-fc} we have
$$\qa^r f(x,u(x))\leq \qa f(x,u(x))\leq \iQ f(x,u(x)+\tilde{w}_n(y))\,dy\quad\text{for every }n\in\N,$$
and we obtain the second inequality in \eqref{eq:same-set}. By the measurability of 
$$x\mapsto \qa^{r,n}f(x,u(x))$$
for every $r,n\in\N$ (we can reduce it to a countable pointwise infimum of measurable functions), we deduce the measurability of
$$x\mapsto \qa^r f(x,u(x))$$
for every $r\in\N$, which in turn implies the thesis.
\end{proof}
For every $D\in\mathcal{O}(\Omega)$ and $u\in L^p(\Omega;\R^d)$, define
\begin{align}
\label{eq:def-I}
\mathcal{I}(u,D):=\inf\Big\{&\liminf_{n\to +\infty}\int_{D}f(x,u_n(x)):\,u_n\wk u\quad\text{weakly in }L^p(\Omega;\R^m)\, \\
&\nonumber\text{and }\pdeor u_n\to 0\quad\text{strongly in }W^{-1,p}(\Omega;\R^l)\Big\}.
\end{align}
Corollary \ref{cor:local} provides us with the following integral representation of $\mathcal{I}$.
\begin{corollary}
\label{thm:relax}
Let $1<p<+\infty$ and let $\pdeor$ be as in Theorem \ref{thm:main}.
Let $f:\Omega\times \rd\to [0,+\infty)$ be a Carath\'eodory function satisfying 
$$0\leq f(x,\xi)\leq C(1+|\xi|^p)\quad\text{for a.e. }x\in\Omega,\text{ and for all }\xi\in\R^d.$$
Then
$$\int_D \qa f(x,u(x))\,dx=\mathcal{I}(u,D)$$
for all $D\in\Oo(\Omega)$ and $u\in L^p(\Omega;\rd)$ with $\pdeor u=0$.
\end{corollary}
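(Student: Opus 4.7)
The plan is to deduce the corollary as a direct by-product of Theorem \ref{thm:main-result-A-free} and Corollary \ref{cor:local}, applied to the $y$-independent Carath\'eodory integrand $\tilde{f}(x, y, \xi) := f(x, \xi)$, which trivially satisfies \eqref{eq:hp-f-1}--\eqref{eq:growth-p-f-3}. Since $\tilde{f}(x, ny, \xi) = f(x, \xi)$ is constant in $n$ and in $y$, the limit in Corollary \ref{cor:local} collapses:
\[
f_{\mathrm{hom}}(x, \xi) = \liminfn \inf_{v \in \C_x} \iQ f(x, \xi + v(y))\, dy = \inf_{v \in \C_x} \iQ f(x, \xi + v(y))\, dy = \qa f(x, \xi),
\]
whose measurability in $x$ is granted by Lemma \ref{lemma:measurability}. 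Together with Theorem \ref{thm:main-result-A-free}, this immediately yields $\mathcal{I}(u, \Omega) = \int_\Omega \qa f(x, u(x))\, dx$, which settles the case $D = \Omega$.

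For an arbitrary $D \in \Oo(\Omega)$, the lim inf inequality $\mathcal{I}(u, D) \geq \int_D \qa f(x, u(x))\, dx$ is obtained by inspecting Propositions \ref{thm:liminf-A-free-1} and \ref{thm:liminf-A-free-2}: both proofs already operate on a compactly contained open set $\Omega' \subset\subset \Omega$, and since $\pdeor u_n \to 0$ in $W^{-1,p}(\Omega; \R^l)$ restricts to $W^{-1,p}(D; \R^l)$, letting $\Omega'$ exhaust $D$ yields the claim. For the reverse inequality, I would construct a recovery sequence that equals $u$ on $\Omega \setminus D$ in the limit. Fix $\eta > 0$ and, via a selection argument patterned on Lemma \ref{lemma:measurability} (replacing the infimum defining $\qa f$ by a countable $W^{-1,p}$-penalized and $L^p$-truncated approximation, then invoking a standard measurable selection theorem), extract a measurable $v : \Omega \to L^p_{\mathrm{per}}(\R^N; \R^d)$ with $v(x, \cdot)$ in the $L^p$-closure of $\C_x$ and $\iQ f(x, u(x) + v(x, y))\, dy \leq \qa f(x, u(x)) + \eta$ for a.e.\ $x \in D$. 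Setting $w(x, y) := \chi_D(x)\, v(x, y)$, one checks $w \in \mathscr{W}$ since both defining constraints of $\mathscr{W}$ are pointwise in $x$.

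Applying Proposition \ref{thm:limsup-basic-A-free} to the pair $(u, w)$ produces $\{u_\ep\}$ with $u_\ep \wk u$, $\pdeor u_\ep \to 0$ in $W^{-1,p}(\Omega; \R^l)$, $u_\ep \sts u + w$ in $L^p(\Omega \times Q; \R^d)$, and $\limsup_\ep \iO f(x, u_\ep(x))\, dx \leq \iOQ f(x, u(x) + w(x, y))\, dy\, dx$. Since $w \equiv 0$ on $(\Omega \setminus D) \times Q$, Theorem \ref{thm:equivalent-two-scale} together with Proposition \ref{prop:conv-unf-op} yields $u_\ep \to u$ strongly in $L^p(\Omega \setminus D; \R^d)$; combined with the $p$-equiintegrability provided by Lemma \ref{lemma:proj-operator} in the construction and Vitali's theorem, this gives $\int_{\Omega \setminus D} f(x, u_\ep(x))\, dx \to \int_{\Omega \setminus D} f(x, u(x))\, dx$. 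Subtracting, one obtains $\limsup_\ep \int_D f(x, u_\ep)\, dx \leq \int_D \qa f(x, u)\, dx + \eta |D|$, so $\mathcal{I}(u, D) \leq \int_D \qa f(x, u)\, dx + \eta|D|$, and letting $\eta \downarrow 0$ concludes. The main obstacle I expect is the measurable selection of $v(x, \cdot)$ with joint measurability in $x$; it is circumvented using the same countable-approximation strategy deployed in the proof of Lemma \ref{lemma:measurability}.
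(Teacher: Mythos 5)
The paper itself gives no proof of Corollary~\ref{thm:relax} beyond the sentence preceding it; the case $D=\Omega$ is exactly what it intends, and your first paragraph captures it: apply Theorem~\ref{thm:main-result-A-free} and Corollary~\ref{cor:local} to the $y$-independent integrand $\tilde f(x,y,\xi):=f(x,\xi)$, note that $f_{\mathrm{hom}}(x,\xi)=\qa f(x,\xi)$, and invoke Lemma~\ref{lemma:measurability} for measurability. The liminf inequality for a general open $D$ is correct but is obtained more directly than you suggest: there is no need to re-run Propositions~\ref{thm:liminf-A-free-1}--\ref{thm:liminf-A-free-2} and exhaust $D$ by $\Omega'$; simply apply Theorem~\ref{thm:main-result-A-free} with $D$ in place of $\Omega$, since any sequence competing for $\mathcal{I}(u,D)$ restricts to one on $D$ still satisfying $u_n\wk u$ in $L^p(D;\R^d)$ and $\pdeor u_n\to 0$ in $W^{-1,p}(D;\R^l)$ (the latter because $W^{1,p'}_0(D;\R^l)\hookrightarrow W^{1,p'}_0(\Omega;\R^l)$ by zero-extension).

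For the limsup over a general $D$ your cut-off $w:=\chi_D\,v$ is the right device, and $w\in\mathscr{W}$ because both defining conditions of $\mathscr{W}$ in \eqref{eq:def-w-f} are pointwise in $x$. Two points, however, are genuinely incomplete as written. First, the measurable selection of a near-optimal $v(x,\cdot)$ with $v(x,\cdot)$ in the $L^p$-closure of $\C_x$ is not a routine consequence of Lemma~\ref{lemma:measurability}: that lemma produces a \emph{penalized} countable infimum with competitors in $L^p(Q;\R^d)$, not in $\overline{\C_x}$, and one must additionally post-compose with the projection $\Pi(x)$ of Lemma~\ref{lemma:proj-operator} (using property (P3) to control the error) to obtain a jointly measurable $v$ actually lying in $\mathscr{W}$; this deserves to be spelled out rather than left as a remark. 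Second, the deduction that $u_\ep\to u$ strongly in $L^p(\Omega\setminus D;\R^d)$ from $u_\ep\sts u+w$ with $w\equiv 0$ off $D$ is not immediate: $T_\ep u_\ep(x,\cdot)$ for $x$ near $\partial D$ samples $u_\ep$ in $\ep$-cells straddling $\partial D$, so the restriction of the unfolding operator to $(\Omega\setminus D)\times Q$ is not the unfolding of $u_\ep|_{\Omega\setminus D}$. This is fixable by working on $\Omega\setminus D'$ with $D'\subset\subset D$ and letting $D'\uparrow D$, but as stated the step has a gap. As an aside, a shorter route to the general $D$ would be to apply the $D=\Omega$ case to $g(x,\xi):=\chi_D(x)f(x,\xi)$, for which $\qa g=\chi_D\,\qa f$ and $\mathcal{I}_g(u,\Omega)=\mathcal{I}(u,D)$; but this, like your argument and indeed the corollary's own statement, presupposes that Theorem~\ref{thm:main-result-A-free} extends to integrands merely Carath\'eodory in $x$, whereas hypothesis \eqref{eq:hp-f-2} asks for continuity of $f(\cdot,y,\cdot)$. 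That mismatch is inherent to the paper rather than to your proposal, but it is worth flagging explicitly.
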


\section*{acknowledgements}
The authors thank the Center for Nonlinear Analysis (NSF Grant No. DMS-0635983), where this research was carried out, and
also acknowledge support of the National Science Foundation under the PIRE Grant No.
OISE-0967140. The research of
I. Fonseca and E. Davoli was funded by the National Science Foundation under Grant No. DMS-
0905778. The research of E. Davoli was also supported by the Austrian FWF project ``Global variational methods for nonlinear evolution equations''. E. Davoli is a member of the INdAM-GNAMPA Project 2015 ``Critical phenomena in the mechanics of materials: a variational approach''. The research of I. Fonseca was further partially supported by the National Science Foundation under Grant No. DMS-1411646.
\bibliographystyle{plain}
\bibliography{ed}
\end{document}